\newcommand{\comment}[1]		{}
		\DeclareSymbolFont{AMSb}{U}{msb}{m}{n}
		\DeclareSymbolFontAlphabet{\mathbb}{AMSb}
\newcommand{\mockalph}[1]{\!}
\renewcommand{\l@figure}{\@dottedtocline{1}{1em}{3.5em}}
\renewcommand{\l@table}{\@dottedtocline{2}{1em}{3.5em}}
\newcommand*{\noaddvspace}{\renewcommand*{\addvspace}[1]{}}
\let\c@figure\c@table
\let\c@equation\c@table
\numberwithin{table}{section}
\numberwithin{figure}{section}
\newtheorem{theorem}[table]{Theorem}
\newtheorem{proposition}[table]{Proposition}
\newtheorem{corollary}[table]{Corollary}
\newtheorem{lemma}[table]{Lemma}
\newtheorem{claim}[table]{Claim}
\theoremstyle{definition}
\newtheorem{definition}[table]{Definition}
\newtheorem{notation}[table]{Notation}
\newtheorem{observation}[table]{Observation}
\newtheorem{conjecture}[table]{Conjecture}
\theoremstyle{remark}
\newtheorem{fact}[table]{Fact}
\newtheorem{example}[table]{Example}
\newtheorem{exercise}[table]{Exercise}
\newtheorem{problem}[table]{Problem}
\newtheorem{histrmks}[table]{Historical remarks}
\newtheorem{remark}[table]{Remark}
\newtheorem{remarks}[table]{Remarks}
\theoremstyle{plain}
\newtheorem*{thm*}{Theorem}
\newtheorem*{theorem*}{Theorem}
\newtheorem*{prop*}{Proposition}
\newtheorem*{proposition*}{Proposition}
\newtheorem*{lemma*}{Lemma}
\newtheorem*{corollary*}{Corollary}
\newtheorem*{cor*}{Corollary}
\theoremstyle{definition}
\newtheorem*{definition*}{Definition}
\newtheorem*{defn*}{Definition}
\newtheorem*{QQ*}{Question}
\newtheorem*{obs*}{Observation}
\newtheorem*{notation*}{Notation}
\theoremstyle{remark}
\newtheorem*{rmk*}{Remark}
\newtheorem*{remark*}{Remark}
\newtheorem*{examples*}{Examples}
\newtheorem*{example*}{Example}
\newtheorem*{EG*}{Example}
\newtheorem*{EGs*}{Examples}
\newtheorem*{fact*}{Fact}
\newtheorem*{prob*}{Problem}
\newcommand		{\defd}[1]	{\textcolor{RoyalBlue}{\textbf{\textit{#1}}}}
\newcommand		{\defm}[1]	{\textcolor{RoyalBlue}{#1}}
\patchcmd{\@setref}{\bfseries ??}{\bfseries\color{red} FIX ME!}{}{}
\patchcmd{\@setcref}         {??}{\color{red} FIX ME!}{}{}
\patchcmd{\@setcref}         {??}{\color{red} FIX ME!}{}{}
\patchcmd{\@setcrefrange}    {??}{\color{red} FIX ME!}{}{}
\patchcmd{\@setcrefrange}    {??}{\color{red} FIX ME!}{}{}
\patchcmd{\@setcrefrange}    {??}{\color{red} FIX ME!}{}{}
\patchcmd{\@setcrefrange}    {??}{\color{red} FIX ME!}{}{}
\patchcmd{\@setcrefrange}    {??}{\color{red} FIX ME!}{}{}
\patchcmd{\@setcrefrange}    {??}{\color{red} FIX ME!}{}{}
\patchcmd{\@setnamecref}     {??}{\color{red} FIX ME!}{}{}
\patchcmd{\@setnamecref}     {??}{\color{red} FIX ME!}{}{}
\patchcmd{\@setcpageref}     {??}{\color{red} FIX ME!}{}{}
\patchcmd{\@setcpageref}     {??}{\color{red} FIX ME!}{}{}
\patchcmd{\@setcpagerefrange}{??}{\color{red} FIX ME!}{}{}
\patchcmd{\@setcpagerefrange}{??}{\color{red} FIX ME!}{}{}
\patchcmd{\@setcpagerefrange}{??}{\color{red} FIX ME!}{}{}
\patchcmd{\@setcpagerefrange}{??}{\color{red} FIX ME!}{}{}
\patchcmd{\@setcpagerefrange}{??}{\color{red} FIX ME!}{}{}
\patchcmd{\@cref}            {??}{\color{red} FIX ME!}{}{}
\patchcmd{\@setcite}{\bfseries ?!}{\bfseries\color{red} FIX ME!}{}{}
\patchcmd{\@citex}{\bfseries ?}{\color{red}{\@citeb}}{}{%
	\GenericWarning{}{Failed to patch \protect\@citex}}
\def\blx@citation@entry#1#2{%
	\blx@bibreq{#1}%
	\ifinlist{#1}{\blx@cites}
	{}
	{\listgadd{\blx@cites}{#1}%
		\blx@auxwrite\@mainaux{}{\string\abx@aux@cite{#1}}}%
	\ifinlistcs{#1}{blx@segm@\the\c@refsection @\the\c@refsegment}
	{}
	{\listcsgadd{blx@segm@\the\c@refsection @\the\c@refsegment}{#1}}%
	\blx@ifdata{#1}%
	{}%
	{\ifcsdef{blx@miss@\the\c@refsection}%
		{\ifinlistcs{#1}{blx@miss@\the\c@refsection}%
			{{\bfseries\color{red} cite:} }%
			{\blx@logreq@active{#2{#1}}}}%
		{\blx@logreq@active{#2{#1}}}}}
\def\blx@citeadd#1{%
	\ifcsdef{blx@keyalias@\the\c@refsection @#1}
	{\edef\blx@realkey{\csuse{blx@keyalias@\the\c@refsection @#1}}}
	{\def\blx@realkey{#1}}%
	\expandafter\blx@citation\expandafter{\blx@realkey}\blx@msg@cundefon
	\expandafter\blx@ifdata\expandafter{\blx@realkey}
	{\advance\blx@tempcnta\@ne
		\listeadd\blx@tempa{\blx@realkey}}
	{\ifnum\blx@tempcntb>\z@\multicitedelim\fi
		\expandafter\abx@missing\expandafter{\blx@realkey}%
		\advance\blx@tempcntb\@ne}}
\DeclarePairedDelimiterX{\pmodx}[1]{(}{)}{{\operator@font mod}\mkern6mu#1}
\renewcommand{\pmod}{%
	\allowbreak
	\if@display\mkern18mu\else\mkern8mu\fi
	\pmodx
}
\newcommand{\oset}[3][0ex]{%
\raisebox{.175ex}{$%
  \mathrel{\mathop{#3}\limits^{
    \vbox to#1{\kern-2\ex@
    \hbox{$\scriptstyle#2$}\vss}}}
    $}%
    }
\newcommand{\myred}{BrickRed}
\tikzstyle{image}=[rectangle,fill=Red!20,inner sep=-2pt]
\tikzstyle{nonzero}=[rectangle,fill=Navy!20,inner sep=0pt]
\tikzstyle{nonzerosm}=[rectangle,fill=Navy!20,inner sep=-2pt]
\newbox\xrat@below
\newbox\xrat@above
\newcommand{\xrightarrowtail}[2][]{%
  \setbox\xrat@below=\hbox{\ensuremath{\scriptstyle #1}}%
  \setbox\xrat@above=\hbox{\ensuremath{\scriptstyle #2}}%
  \pgfmathsetlengthmacro{\xrat@len}{max(\wd\xrat@below,\wd\xrat@above)+.6em}%
  \mathrel{\tikz [>->,baseline=-.55ex]
                 \draw (0,0) -- node[below=-2pt] {\box\xrat@below}
                                node[above=-2pt] {\box\xrat@above}
                       (\xrat@len,0) ;}}
\newbox\xrat@below
\newbox\xrat@above
\renewcommand{\xtwoheadrightarrow}[2][]{%
  \setbox\xrat@below=\hbox{\ensuremath{\scriptstyle #1}}%
  \setbox\xrat@above=\hbox{\ensuremath{\scriptstyle #2}}%
  \pgfmathsetlengthmacro{\xrat@len}{max(\wd\xrat@below,\wd\xrat@above)+.6em}%
  \mathrel{\tikz [->>,baseline=-.55ex]
                 \draw (0,0) -- node[below=-2pt] {\box\xrat@below}
                                node[above=-2pt] {\box\xrat@above}
                       (\xrat@len,0) ;}}
\newcommand{\xmono}{\xrightarrowtail}
\newcommand{\xepi}{\xtwoheadrightarrow}
\newcommand{\epi}{\xepi{\phantom{\ \, }}}
\newcommand{\presectionskip}{-1.5\baselineskip}
\newcommand{\postsectionskip}{0.3\baselineskip}
\renewcommand{\section}{\@startsection
  {chapter}{0}{0mm}
  {\presectionskip}
  {\postsectionskip}
  {\sffamily\huge}}
\renewcommand{\section}{\@startsection
  {section}{1}{0mm}
  {\presectionskip}
  {\postsectionskip}
  {\sffamily\LARGE}}
\renewcommand{\subsection}{\@startsection
  {subsection}{2}{0mm}
  {\presectionskip}
  {\postsectionskip}
  {\sffamily\Large}}
\renewcommand{\subsubsection}{\@startsection
  {subsubsection}{3}{0mm}
  {\presectionskip}
  {\postsectionskip}
  {\sffamily\normalsize}}
\renewcommand{\@seccntformat}[1]{\csname the#1\endcsname.\quad}
\newcommand\HUGE{\@setfontsize\Huge{30}{47}}
\titleformat{\chapter}[display]
{\sffamily\Large}
{Chapter {\HUGE\normalfont\thechapter}}    
{1em}
{\huge}
\def\smallunderbrace#1{\mathop{\vtop{\m@th\ialign{##\crcr
   $\hfil\displaystyle{#1}\hfil$\crcr
   \noalign{\kern3\p@\nointerlineskip}%
   \tiny\upbracefill\crcr\noalign{\kern3\p@}}}}\limits}
\newcommand{\bthm}{\begin{theorem}}
\newcommand{\ethm}{\end{theorem}}
\newcommand{\bprop}{\begin{proposition}}
\newcommand{\eprop}{\end{proposition}}
\newcommand{\bcor}{\begin{corollary}}
\newcommand{\ecor}{\end{corollary}}
\newcommand{\bconj}{\begin{conjecture}}
\newcommand{\econj}{\end{conjecture}}
\newcommand{\blem}{\begin{lemma}}
\newcommand{\elem}{\end{lemma}}
\newcommand{\bclm}{\begin{claim}}
\newcommand{\eclm}{\end{claim}}
\newcommand{\bpf}{\begin{proof}}
\newcommand{\epf}{\end{proof}}
\newcommand{\bdetails}{\begin{details}}
\newcommand{\edetails}{\end{details}}
\newcommand{\bdefi}{\begin{definition}}
\newcommand{\edefi}{\end{definition}}
\newcommand{\bdefn}{\begin{definition}}
\newcommand{\edefn}{\end{definition}}
\newcommand{\bex}{\begin{example}}
\newcommand{\eex}{\end{example}}
\newcommand{\bprob}{\begin{problem}}
\newcommand{\eprob}{\end{problem}}
\newcommand{\bob}{\begin{observation}}
\newcommand{\eob}{\end{observation}}
\newcommand{\bexer}{\begin{exercise}}
\newcommand{\eexer}{\end{exercise}}
\newcommand{\bexers}{\begin{exercises}}
\newcommand{\eexers}{\end{exercises}}
\newcommand{\brmk}{\begin{remark}}
\newcommand{\ermk}{\end{remark}}
\newcommand{\bhist}{\begin{histrmks}}
\newcommand{\ehist}{\end{histrmks}}
\newcommand{\brmks}{\begin{remarks}}
\newcommand{\ermks}{\end{remarks}}
\newcommand{\bntn}{\begin{notation}}
\newcommand{\entn}{\end{notation}}
\newcommand{\bfct}{\begin{fact}}
\newcommand{\efct}{\end{fact}}
\newcommand{\bfcts}{\begin{facts}}
\newcommand{\befcts}{\end{facts}}
\newcommand{\benum}{\begin{enumerate}}
\newcommand{\eenum}{\end{enumerate}}
\newcommand{\bitem}{\begin{itemize}}
\newcommand{\eitem}{\end{itemize}}
\NewDocumentEnvironment{adjunctions}{O{}}
{
	\cs_set_eq:cN {@arraycr} \farin_arraycr:
	\keys_set:nn { farin/adjunction } { #1 }
	\begin{array}
		{
			@{ \hspace { \dim_eval:n { \l_farin_left_shift_dim + \l_farin_padding_dim } } }
			r
			@{ {\farin_strut:} \l_farin_symbol_tl {} }
			l
			@{ \hspace { \dim_eval:n { \l_farin_right_shift_dim + \l_farin_padding_dim } } }
		}
	}
	{
	\end{array}
}
\let\hatt\^
\renewcommand	{\epsilon}	{\varepsilon}
\renewcommand	{\a}		{\alpha}
\renewcommand	{\b}		{\beta}
\renewcommand	{\d}		{\delta}
\renewcommand	{\l}		{\lambda}
\renewcommand	{\:}		{\colon}
\newcommand		{\quotientmed}[2]	{{\raisebox{.2em}{$#1$}}\  \!\!\big/\!\!\
										{\raisebox{-.2em}{$#2$}}}
\newcommand		{\qquotientmed}[2]	{{\raisebox{.2em}{$#1$}} \ \big/\!\!\!\!\!\big/ \
										{\raisebox{-.2em}{$#2$}}}
\newcommand		{\fs}		{{\mathfrak s}}
\newcommand		{\fk}		{{\mathfrak k}}
\newcommand		{\ft}		{{\mathfrak t}}
\newcommand		{\fg}		{{\mathfrak g}}
\newcommand		{\tG}		{\widetilde{G}}
\newcommand		{\tH}		{\widetilde{H}}
\newcommand		{\tN}		{\widetilde{N}}
\newcommand		{\wP}		{\widehat{P}}
\newcommand		{\SSS}	{Serre spectral sequence\xspace}
\newcommand		{\eqf}	{equivariantly formal\xspace}
\newcommand		{\eqfity}	{equivariant formality\xspace}
\newcommand		{\isotf}	{isotropy-formal\xspace}
\newcommand		{\isotfity}	{isotropy-formality\xspace}
\newcommand		{\exterior}	{\Lambda}
\newcommand		{\ext}		{\exterior}
\newcommand		{\ab}			{^{\mathrm{ab}}}
\renewcommand	{\th}			{^{\mathrm{th}}}
\newcommand		{\CDGA}		{\textsc{cdga}\xspace}
\newcommand	{\fa}{\f a}
\newcommand{\ccpair}{compact, connected pair\xspace}
\newcommand{\GK}{$\smash{(G,K)}$\xspace}
\newcommand{\GS}{$\smash{(G,S)}$\xspace}
\newcommand		{\dual}	{\mn^\vee}
\newcommand{\subalign}[1]{%
  \vcenter{%
    \Let@ \restore@math@cr \default@tag
    \baselineskip\fontdimen10 \scriptfont\tw@
    \advance\baselineskip\fontdimen12 \scriptfont\tw@
    \lineskip\thr@@\fontdimen8 \scriptfont\thr@@
    \lineskiplimit\lineskip
    \ialign{\hfil$\m@th\scriptstyle##$&$\m@th\scriptstyle{}##$\crcr
      #1\crcr
    }%
  }
}
\newcommand		{\Ei}		{E_\infty}
\newcommand		{\ang}[1]			{\langle #1 \rangle}
\newcommand		{\eqn}[1]			{\begin{align*} #1 \end{align*}}
\newcommand		{\quation}[1]		{\begin{equation} #1 \end{equation}}
\newcommand		{\case}[1]			{\begin{cases} #1 \end{cases}}
\newcommand		{\hyref}[1]			{\hyperref[#1]{\ref*{#1}}}
\newcommand		{\bs}				{\bigskip}
\newcommand		{\mn}				{\mspace{-2mu}}
\newcommand		{\mnn}				{\mspace{-1mu}}
\newcommand		{\dsp}			{\displaystyle}
\newcommand		{\nd}				{\noindent}
\newcommand		{\ol}				{\overline}
\newcommand		{\os}			{\overset}
\newcommand		{\wh}			{\widehat}
\newcommand		{\wt}			{\widetilde}
\newcommand		{\mr}			{\mathrm}
\newcommand		{\bb}			{\mathbb}
\newcommand		{\f}			{\mathfrak}
\newcommand		{\g}			{\gamma}
\newcommand		{\e}			{\epsilon}
\newcommand		{\z}			{\zeta}
\newcommand		{\s}			{\sigma}
\newcommand		{\w}			{\omega}
\newcommand		{\G}			{\Gamma}
\newcommand		{\D}			{\Delta}
\DeclareSymbolFont{cmletters}{OT1}{cmr}{m}{n}
\DeclareMathSymbol{\Ups}{\mathalpha}{cmletters}{"7}
\renewcommand	{\Upsilon}{\Ups}
\newcommand		{\N}		{\bb N}
\newcommand		{\Z}		{\bb Z}
\newcommand		{\Q}		{\bb Q}
\newcommand		{\R}		{\bb R}
\newcommand		{\C}		{\bb C}
\newcommand		{\Quat}	{\bb H}
\newcommand 		{\HH}	{\mathbb{H}}
\newcommand		{\Oct}	{\bb O}
\newcommand		{\CP}		{\bb C \mr P}
\newcommand		{\HP}		{\bb H \mr P}
\newcommand		{\OP}	{\bb O \mr P}
\renewcommand	{\Im}		{\mr{Im}}
\DeclareMathOperator{\id}			{id{}}
\renewcommand 		{\H}		{H^*}
\newcommand 		{\HG}	{{\H_G}}
\newcommand 		{\HT}	{{\H_T}}
\newcommand 		{\HS}	{{\H_S}}
\let\union\cup%
\renewcommand	{\cup}		{\mspace{-1mu}\smile\mspace{-1mu}}
\let\inter\cap%
\renewcommand	{\cap}		{\mspace{-1mu}\frown\mspace{-1mu}}
\newcommand		{\Inter}		{\bigcap}
\newcommand		{\less}		{\setminus}
\newcommand		{\sub}		{\subseteq}
\newcommand		{\subn}		{\subsetneq}
\newcommand		{\dis}			{\amalg}
\newcommand		{\quot}		{\,/ \mn\mn /\,}
\renewcommand	{\-}		{^{-1}}
\renewcommand	{\.}		{\cdot}
\newcommand		{\x}		{\times}
\newcommand{\oplushigher}{\mathbin{\raisebox{.85pt}{$\displaystyle\oplus$}}}
\DeclareMathOperator*{\otimesvariable}{%
			\mathchoice {\raisebox{.85pt}{$\displaystyle\otimes$}}
						{\raisebox{.85pt}{$\otimes$}}
						{\raisebox{0.7pt}{$\scriptstyle\otimes$}}
						{\raisebox{0.2pt}{$\scriptscriptstyle\otimes$}}
						}
\newcommand		{\tensor}		{\otimesvariable}
\newcommand		{\direct}		{\oplushigher}
\newcommand		{\ox}			{\tensor}
\newcommand		{\+}			{\direct}
\newcommand		{\Direct}		{\bigoplus}
\DeclareMathOperator{\diag}		{diag}
\DeclareMathOperator{\rk}			{rk }
\DeclareMathOperator{\im}		{im }
\DeclareMathOperator{\coker}		{coker }
\DeclareMathOperator{\Stab}		{Stab }
\DeclareMathOperator{\Ad}		{Ad }
\DeclareMathOperator{\Homeo}	{Homeo}
\DeclareMathOperator{\Aut}		{Aut }
\newcommand		{\SO}		{\mr{SO}}
\newcommand		{\U}			{\mr{U}}
\newcommand		{\SU}			{\mr{SU}}
\newcommand		{\Sp}			{\mr{Sp}}
\newcommand		{\Spin}		{\mr{Spin}}
\newcommand		{\so}			{\mathfrak{so}}
\newcommand		{\spin}		{\mathfrak{spin}}
\newcommand		{\longto} 		{\longrightarrow}
\newcommand		{\lt}			{\longto}
\newcommand		{\lmt}			{\longmapsto}
\newcommand		{\inc}		{\hookrightarrow}
\newcommand		{\xinc}		{\xhookrightarrow}
\newcommand		{\longinc}		{\xinc[]{\ \ \ \ }}
\newcommand		{\longmono}	{\xmono[]{\ \ \ \ }}
\newcommand		{\longepi}	{\xepi[]{\ \ \ \ }}
\newcommand		{\simto}		{\xrightarrow{\sim}}
\newcommand		{\longsimto}	{\os\sim\longto}
\newcommand		{\isoto}		{\longsimto}
\newcommand		{\ceq}			{\coloneqq}
\newcommand		{\eqc}			{\eqqcolon}
\newcommand		{\ideal}			{\unlhd}
\newcommand		{\normal}			{\unlhd}
\newcommand		{\iso}				{\cong}
\newcommand		{\homeo}			{\approx}
\numberwithin{equation}{section}
\newcommand{\xmark}{\ding{55}}%
\newcommand{\Negative}{\qquad\qquad\qquad\ \ \ \xmark}
\newcommand{\Smallneg}{\qquad\ \ \ \xmark}
\newcommand{\yesgreater}{$>$}
\newcommand{\yesequal}{\quad \,\  $=$}
\theoremstyle{definition}
\newtheorem{discussion}[table]{Discussion}
\newif\ifdebug                                                      
\newcommand {\revision}[1] {\ifdebug\textcolor{Red}{#1}\else{#1}\fi}
\newcommand{\printname}[1] 
{\smash{\makebox[0pt]{\hspace{-.0in}\raisebox{8pt}{\tiny #1}}}}
\newcommand {\labell}[1] %
{\ifdebug {\label{#1}\printname{#1}} \else {\label{#1}} \fi}
\renewcommand			{\H}		{H^*}
\newcommand			{\fh}		{\f h}
\newcommand			{\Hodd}		{H^{\mr{odd}}}
\newcommand			{\Heven}	{H^{\mr{even}}}
\newcommand			{\tGtH}		{$(\tG,\tH)$\xspace}
\newcommand			{\GH}		{$(G,H)$\xspace}
\newcommand			{\GHS}		{$(G,H_S)$\xspace}
\newcommand			{\Hl}		{{H_S}}
\newcommand			{\Hlp}		{H'}
\newcommand			{\Kl}		{{L_S}}
\newcommand			{\fkl}		{\f l} 
\newcommand			{\WH}		{{W_\Hl}}
\newcommand			{\WK}		{{W_\Kl}}
\newcommand			{\weight}		{\a}
\newcommand			{\genwt}	{\b}
\newcommand			{\simple}	{\g}
\newcommand			{\St}		{S^\perp}
\newcommand			{\te}		{\widetilde e}
\newcommand			{\Cen}		{Z}
\newcommand			{\Sym}		{\Sigma}
\newcommand			{\trivialgroup}	{1}
\newcommand			{\mrp}		{maximal regular pair\xspace}
\newcommand			{\rsp}		{rational sphere pair\xspace}
\newcommand			{\rspp}		{rational sphere product pair\xspace}
\newcommand			{\rsppp}	{rational sphere (product) pair\xspace}
\newcommand			{\ip}		{irreducible pair\xspace}
\newcommand			{\isp}		{irreducible subpair\xspace}
\newcommand			{\ve}		{virtually effective\xspace}
\newcommand			{\csa}		{centralizer-and-sphere argument\xspace}
\begin{document}
\title{\vspace{-1em}\huge Equivariant formality of corank-one isotropy actions 
and products of rational spheres}
\author{\Large Jeffrey D.~Carlson and Chen He}
\maketitle

\begin{abstract}
\small{
	We completely characterize the pairs 
	of connected Lie groups $G > K$
	such that $\rk G - \rk K = 1$
	and the isotropy action of $K$ on $G/K$
	is equivariantly formal.
	The analysis requires us
	to correct and extend 
	an existing partial classification of homogeneous quotients $G/K$
	with the rational homotopy type 
	of a product of an odd- and an even-dimensional
	sphere. 
}
\end{abstract}

\section{Introduction}
Among the most consequential algebraic conditions on 
a continuous group action $G \x X \lt X$
is surjectivity
of the map $\H(X_G;\Q) \longepi \H(X;\Q)$
induced by the fiber inclusion of the Borel fibration $X \to X_G \to BG$,
known by the trade name of \emph{equivariant formality}.
This notion was already
considered by Borel~\cite[Ch.~XII]{borel1960seminar} long before
being brought to its present prominence in well-known 
work of Goresky--Kottwitz--MacPherson~\cite{GKM1998},
and makes cohomology computations tractable
and powerful integral localization theorems applicable~%
\cite{BV1982,AB1984,jeffreykirwan1995}.

The most fundamental type of action
neither guaranteed to be equivariantly formal 
nor equivariantly informal 
is that of the translation action $k\.gK = kgK$ 
of a subgroup~$K$ of a Lie group~$G$
on the homogeneous space~$G/K$,
the (global) \emph{isotropy action}
whose tangent action at $1K$ 
is so important for local study of smooth actions.
One always assumes $G$ and $K$ are connected,
\revision{and then a reduction due to one of the 
authors~\cite[Prop.~3.1]{carlson2018eqftorus} 
allows one to assume they are compact as well.}
Then\revision, despite a number of equivalent characterizations~\cite{carlsonfok2018},
and a number of special classes of cases 
known to be equivariantly formal~\cite{brion1998eqcohom,shiga1996equivariant,goertschesnoshari2016},
one only has a classification of all equivariantly 
formal isotropy actions 
when $\rk G = \rk K$ or $\rk K = 1$~\cite{brion1998eqcohom,
	carlson2018eqftorus}.


In this work we add $\rk G - \rk K = 1$
to the fully classified range,
and find to our surprise 
that we require the classification
of effective transitive actions of compact Lie groups on spaces 
rationally homotopy equivalent to
a product $S^{\mr{odd}} \x S^{\mr{even}}$ of spheres. 
Such homogeneous rational sphere products 
have received extended consideration
as the subject of at least three Ph.D. dissertations,
and an AMS \emph{Memoirs}
volume on isoparametric hypersurfaces
admitting a transitive isometry group on one focal manifold%
~\cite{kamerich,kramer2002homogeneous,bletzsiebert2002,wolfrom2002}, 
but these invaluable and generally very
thorough analyses
unfortunately do not consider all cases we need and suffer
from minor errors and omissions
(see Discussions \ref{rmk:subdivision}, 
	\ref{rmk:comparison-Kamerich}, and \ref{rmk:comparison-Kramer}),
so that we must
revise and complete the classification ourselves.
This classification appears as part of \Cref{table:Kramer+Wolfrom}
and occupies \Cref{sec:cases}.

The classification appears at the end of a sequence of reductions,
which begins with 
a generic \ccpair $(G,K)$ with $\rk G - \rk K = 1$
and ends with an item in \Cref{table:Kramer+Wolfrom}.
The reduction involving rational sphere products
requires a
standard setup we maintain throughout. 

\begin{notation}\labell{def:sec1}
If $\defm G$ is a compact, connected Lie group with closed, connected subgroup $\defm K$ 
whose maximal torus $\defm S$ is of codimension one in a maximal torus $\defm T$ of $G$,
we write $\defm {W_G}$ for the Weyl group,
$\defm{w_0} = \defm{w_0^G} \in W_G$ for the longest word,
$\defm\fg$ for the Lie algebra,
$\defm{N_G(S)}$ and $\defm{\Cen_G(S)}$ respectively 
	for the normalizer and the centralizer,
$\defm N$ for the component group $\pi_0 N_G(S)$,
and  
$\defm{H_S}$ for the largest closed, connected 
	subgroup of $G$ containing $S$ as a maximal torus
	and centralizing the orthogonal complement to $\fs$ in $\ft$
	under a fixed $\Ad(G)$-invariant inner product.
    \revision{
    (See \Cref{def:LH} and the following material for much more on this group.)
    }
\end{notation}

\begin{definition}
	We call a pair of topological groups $(G,K)$ with $G > K$ \defd{isotropy-formal}
	if the isotropy action of $K$ on $G/K$ is equivariantly formal.
\end{definition}

%
%

\begin{theorem}\labell{thm:spheres}
	Let $(G,K)$ be a pair of compact, connected Lie groups 
	such that a maximal torus $S$ of $K$ is of codimension one 
	in a maximal torus $T$ of $G$.
\benum
\item\labell{item:sphere}
	If
	$G/K$ has
	the rational cohomology of 
	an odd-dimensional sphere, 
	then 
	$(G,K)$ is isotropy-formal.
\item\labell{item:product}
	If $G/K$ has the rational cohomology of
	a product $S^n \x S^m$,
	with $n$ odd and $m$ even, 
	then $(G,K)$ is \isotf if and only if
	$|N| \neq |W_K|$. 
\item\labell{item:converse}
	If $K = H_S$, 
	then $(G,K)$ is \isotf if and only if one of the two conditions above
	holds.
\item
	We have $N \ncong W_{H_S}$ if and only if 
	$w_0$
	stabilizes $\fs$ but $w_0|_{\fs}$ is not in $W_{H_S}$. 
\eenum
%
\end{theorem}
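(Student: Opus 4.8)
I will prove the fourth and final clause, which is purely Lie-theoretic and logically independent of the formality analysis behind clauses (1)--(3). Those three I would approach through the standard reduction of isotropy-formality of $(G,K)$ to equivariant formality of the maximal torus $S$ acting on $G/K$, hence to the Betti-number equality $\dim_\Q\H(G/K)=\dim_\Q\H\bigl((G/K)^S\bigr)$; the right-hand side is controlled by $N=\pi_0 N_G(S)$ and $W_K$, the count $|N|$ versus $|W_K|$ in (2) being a count of components of $(G/K)^S$, and (when $K=H_S$ one has $W_K=W_{H_S}$, so (2) and (4) align). The final clause is exactly what repackages that count group-theoretically. Throughout I fix the $\Ad(G)$-invariant inner product of \Cref{def:sec1}, write $\ell=\fs^\perp\cap\ft$ for the one-dimensional complement with unit generator $\xi$, and choose the positive system so that $\xi$ is dominant (after the reductions the setup may be taken adapted to $S\subset T$ in this way).

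The first step is to realize $N$, $W_{H_S}$, and the extra candidate inside $\GL(\fs)$. Since the centralizer of a torus in a connected compact group is connected, $\Cen_G(S)$ is connected and equals $N_G(S)_0$, so $N\cong N_G(S)/\Cen_G(S)$; multiplying $n\in N_G(S)$ by a suitable $z\in\Cen_G(S)$ carries $nTn^{-1}$ back to $T$ without altering the action on $\fs$, so restriction identifies $N$ with $\bar N:=\Stab_{W_G}(\fs)|_{\fs}$, where $\Stab_{W_G}(\fs)=\{w\in W_G:w\xi=\pm\xi\}$. By Steinberg's theorem the pointwise stabilizer of $\ell$ in $W_G$ is generated by the reflections $s_\alpha$ with $\alpha|_\ell=0$, i.e.\ by the reflections in the roots of $H_S$; hence this stabilizer is exactly $W_{H_S}$. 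Consequently the sign character $\epsilon\colon\Stab_{W_G}(\fs)\to\{\pm1\}$, $w\mapsto w|_\ell$, has kernel precisely $W_{H_S}$, so $[\Stab_{W_G}(\fs):W_{H_S}]\le 2$.

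Next I compare $\bar N$ with $W_{H_S}$. The restriction $r\colon\Stab_{W_G}(\fs)\to\bar N$ is onto, and $W_{H_S}$ embeds faithfully (it fixes $\ell$, hence acts faithfully on $\fs$). Its kernel consists of $w$ with $w|_\fs=\id$ and $w|_\ell=\pm1$, i.e.\ $\id$ and the reflection $\rho$ through $\fs$; and $\rho\in W_G$ iff $\rho=s_\beta$ for a root $\beta$ with $\beta|_\fs=0$, i.e.\ iff $\ell$ is a coroot line $\R\beta^{\vee}$. Therefore $[\bar N:W_{H_S}]=[\Stab_{W_G}(\fs):W_{H_S}]/|\ker r|$ equals $2$ exactly when $\epsilon$ is surjective \emph{and} $\ell$ is not a coroot line; since an index-two overgroup has strictly larger order, this is equivalent to $N\ncong W_{H_S}$.

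It remains to convert ``$\epsilon$ onto'' and ``$\ell$ not a coroot line'' into the stated condition on $w_0$. Because $W_G$ acts orthogonally, $w\xi=-\xi$ already forces $w\fs=\fs$, so $\epsilon$ is onto iff $-\xi\in W_G\xi$; and with $\xi$ dominant, $w_0\xi$ is the unique antidominant point of the orbit while $-\xi$ is antidominant, so $-\xi\in W_G\xi\iff w_0\xi=-\xi$. Finally $w_0|_\fs\in W_{H_S}$ iff $w_0\in W_{H_S}\cdot\ker r$, and tracing the two cases shows that the single clause $w_0|_\fs\notin W_{H_S}$ encodes both ``$w_0\in\Stab_{W_G}(\fs)$ with $\epsilon(w_0)=-1$'' and ``$\ell$ is not a coroot line'': when $\ell$ is a coroot line one has $\Stab_{W_G}(\fs)=W_{H_S}\cup\rho W_{H_S}$, so any $w_0$ stabilizing $\fs$ restricts into $W_{H_S}$. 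Assembling these equivalences yields $N\ncong W_{H_S}\iff w_0\fs=\fs$ and $w_0|_\fs\notin W_{H_S}$. The main obstacle is exactly this last reconciliation: one must verify that the coroot-line case, in which $-\xi\in W_G\xi$ holds automatically yet $N\cong W_{H_S}$, is correctly killed by the clause $w_0|_\fs\notin W_{H_S}$ rather than by $w_0\fs=\fs$, since it is this bookkeeping of $\ker r$ that makes the clean biconditional hold with no explicit case split in the statement.
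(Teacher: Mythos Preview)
Your argument for clause~(4) is correct and follows essentially the same line as the paper's (packaged there as \Cref{thm:w0} and \Cref{thm:Wv}.\ref{thm:N}): both identify $N$ with $W_{\{\pm v\}}|_\fs$, identify $W_{H_S}$ with $W_v|_\fs$, and invoke chamber geometry to see that $-v\in W_G v$ if and only if $w_0 v=-v$. Your explicit analysis of $\ker r$ (the coroot-line case) spells out more carefully what the paper leaves as a passing remark. One notational slip: in the last paragraph you write $\Stab_{W_G}(\fs)=W_{H_S}\cup\rho W_{H_S}$, but $W_{H_S}\subset\Aut\fs$ while the stabilizer sits in $W_G$; you mean $W_v\cup\rho W_v$.

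Your sketch for (1) and (2) correctly names the main tool, the Betti-number criterion $\dim_\Q\H(G/K)\ge 2^{\rk G-\rk K}[N:W_K]$ (the paper's \Cref{thm:dim-criterion}), and these clauses do follow from it as you indicate. But there is a genuine gap for clause~(3): you offer no argument for the converse, that isotropy-formality of $(G,H_S)$ forces $G/H_S$ to have the rational cohomology of an odd sphere or an $S^{\mr{odd}}\times S^{\mr{even}}$. Knowing $[N:W_{H_S}]\in\{1,2\}$ alone does not give this, because it says nothing about the ring structure of $\H(G/H_S)$. The paper needs in addition that isotropy-formality implies formality of $G/H_S$ (\Cref{thm:CF}), and then the structural theorem for formal homogeneous spaces (\Cref{thm:formal}) to obtain $\H(G/H_S)\cong(\H_{H_S}\quot\H_G)\otimes\Lambda\wP$ with $\wP$ one-dimensional in odd degree. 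Only with this shape in hand does the count $\dim_\Q(\H_{H_S}\quot\H_G)=[N:W_{H_S}]\le 2$ force the even-degree factor to be $\H(\pt)$ or $\H(S^{\mr{even}})$.
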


The entire procedure is as follows; 
\emph{irreducible} pairs are \GH such that no proper normal subgroup
of $G$ acts transitively on $G/H$,
and \emph{(virtually) effective} pairs those such that
$\ker\mn\big(G \to \Homeo(G/H)\mnn\big)$ is trivial (resp., finite).
Other vocabulary should be intuitive and will be elaborated over the course
of \Cref{sec:spheres}.

\vspace{-.05em}

\begin{theorem}\labell{thm:alg}
	Let \GK be a corank-one pair of compact, connected Lie groups,
	with maximal tori $(T,S)$ 
	as in \Cref{def:sec1}.
	To determine whether it is \isotf, we may do the following.
	\begin{description}
		\item[Step 1:] 
		Construct the associated \mrp \GHS per \Cref{thm:HS}. 
		\item[Step 2:]
		\ 
		\vspace{-2em}
		\bitem 
		\item
		If $\pi_1(G/H_S)$ is infinite, then \GK is \isotf.
		\item 
		If $\pi_1(G/H_S)$ is finite, 
		compute the (finite, even) number $\defm d = \dim_\Q \H(G/H_S)$.
		\eitem
		\item[Step 3:] 
		\ 		
		\vspace{-2em}
		\bitem 
		\item If $d = 2$, then \GK is \isotf.
		\item If $d \geq 6$, then \GK is not \isotf.
		\item If $d = 4$, then \GHS is a \rspp.	\\
		Take the effective \revision{quotient}~$(\ol G, \ol H)$ of \GHS,
		which is \isotf if and only if \GK is.

%
%

		\eitem
		
		\item[Step 4:]	
The pair $(\ol{G}, \ol{H})$ is irreducible if and only if it is in 
\Cref{table:Kramer+Wolfrom}
up to 
a finite covering (in the sense of \Cref{def:cover}).
		\bitem
\item If so, read from \Cref{table:Kramer+Wolfrom} whether it is \isotf.
\item If not, then  it is not isotropy-formal.
		\eitem
		
	\end{description}
\end{theorem}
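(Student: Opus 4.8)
The plan is to treat the procedure as a decision tree and check that the verdict announced at each leaf is correct, using the reduction of \Cref{thm:HS} as a backbone and the trichotomy of \Cref{thm:spheres} as the source of every verdict. Step~1 is exactly the content of \Cref{thm:HS}: the maximal regular pair \GHS is built from $S$ alone, and \GK is \isotf if and only if \GHS is, so I may replace the arbitrary subgroup $K$ by the group $H_S$, which shares the maximal torus $S$ and realises the case ``$K=H_S$'' of part~\ref{item:converse} of \Cref{thm:spheres}. That part reduces isotropy-formality of \GHS to the question of whether $G/H_S$ has the rational cohomology of an odd sphere or of a product $S^{\mr{odd}}\x S^{\mr{even}}$; Steps 2--4 are then merely a device for deciding which, if either, of these shapes occurs, phrased through the computable invariants $\pi_1(G/H_S)$ and $d=\dim_\Q\H(G/H_S)$.

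For Steps 2 and 3 I would first set up the structural dichotomy living in \Cref{sec:spheres}. Writing $\ft\ominus\fs=\R X_0$ and $L=\Cen_G(X_0)^\circ$, the subgroup $L$ has full rank, the fibre $L/H_S$ is a circle, and $G/H_S$ is the total space of a circle bundle $S^1\to G/H_S\to G/L$ over the equal-rank space $G/L$, whose rational cohomology is concentrated in even degrees and has positive Euler characteristic. Let $\mu\colon H^{k}(G/L;\Q)\to H^{k+2}(G/L;\Q)$ be multiplication by the Euler class $e\in H^2(G/L;\Q)$. The Gysin sequence identifies the even part of $\H(G/H_S)$ with $\coker\mu$ and the odd part with $\ker\mu$ shifted up by one, so $d=\dim_\Q\ker\mu+\dim_\Q\coker\mu=2\dim_\Q\ker\mu$ is even, as claimed. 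I then read off the four leaves: (a)~$\pi_1(G/H_S)$ is infinite precisely when $X_0$ is central, and then $H_S$ is forced to absorb the entire semisimple part, so $G/H_S$ is a circle, a one-dimensional odd sphere, and part~\ref{item:sphere} of \Cref{thm:spheres} applies; (b)~when $\pi_1$ is finite and $d=2$, $\ker\mu$ is spanned by the fundamental class alone, so $G/H_S$ is a rational odd sphere and part~\ref{item:sphere} applies again; (c)~when $d=4$, a single further class degenerates and Poincar\'e duality pins the ring down to that of $S^{\mr{odd}}\x S^{\mr{even}}$, exhibiting \GHS as a \rspp; and (d)~when $d\geq 6$, $G/H_S$ has the cohomology of neither shape, so part~\ref{item:converse} of \Cref{thm:spheres} denies isotropy-formality.

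The $d=4$ leaf passes the problem to the classification. There part~\ref{item:product} of \Cref{thm:spheres} reduces isotropy-formality to the numerical condition $|N|\neq|W_{H_S}|$ (equivalently, by the final clause of \Cref{thm:spheres}, a condition on whether $w_0$ stabilises $\fs$), and this condition is unaffected by dividing out the finite ineffective kernel; this both licenses the replacement of \GHS by its effective quotient $(\bG,\bH)$ in Step~3 and yields the stated equivalence that $(\bG,\bH)$ is \isotf iff \GK is. Step~4 is then a direct consultation of \Cref{table:Kramer+Wolfrom}, which by its construction in \Cref{sec:cases} lists exactly the irreducible effective homogeneous rational sphere products up to finite covering in the sense of \Cref{def:cover}: thus $(\bG,\bH)$ is irreducible iff it appears there, in which case its isotropy-formality is tabulated, whereas a reducible pair is shown in \Cref{sec:cases} not to be isotropy-formal.

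The genuine difficulty lies not in this assembly but in the two inputs it rests on. The first is the circle-bundle structure theory of Steps 2--3: establishing the equivalence between $X_0$ being central and $\pi_1(G/H_S)$ being infinite, and, in the $d=4$ case, ruling out a truncated-polynomial ring so that the cohomology is honestly that of a product of two spheres rather than of a \PDA that is not such a product. The second, and harder, is \Cref{table:Kramer+Wolfrom} itself: the completeness of the classification of irreducible effective homogeneous rational sphere products, together with the reducible-implies-non-formal claim, is precisely what must be re-established in \Cref{sec:cases}, correcting the gaps in the existing literature, and it is there that the real work of the paper is concentrated.
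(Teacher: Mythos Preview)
Your decision-tree assembly is the paper's own: reduce to the maximal regular pair via \Cref{thm:CF}, invoke the trichotomy of \Cref{thm:spheres}, pass to the effective quotient, and consult the table, with the reducible case disposed of by \Cref{thm:exclude-reducible}. Your handling of the infinite-$\pi_1$ leaf is in fact more direct than the paper's citation of \Cref{thm:isotf-circle-factor}: once the Euler class vanishes rationally you get that $\alpha$ is $W_G$-invariant, hence $v\in\f z(\fg)$, hence $L_S=G$ and $G/H_S$ is literally the fibre circle $L_S/H_S$, so \Cref{thm:spheres}.\ref{item:sphere} applies immediately.

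There is one genuine slip. The ineffective kernel of $G\to\Homeo(G/H_S)$---the largest normal subgroup of $G$ contained in $H_S$---is \emph{not} finite in general; any simple factor of $G$ lying inside $H_S$ belongs to it. So the passage to $(\ol G,\ol H)$ cannot be justified by covering-invariance, and the bare assertion that the condition $|N|\neq|W_{H_S}|$ ``is unaffected'' is not yet a proof. The paper fills this via \Cref{thm:inheritance-eff} (equivariant formality survives quotienting by any closed normal subgroup acting trivially) together with \Cref{thm:inheritance-regularity} (maximal regularity survives as well, so \Cref{thm:spheres} still applies to $(\ol G,\ol H)$). Your proposed route through the numerical criterion would work too, but needs precisely these two propositions to show the criterion transfers. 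As a minor aside, the $d=4$ worry about truncated polynomial rings is moot: $\dim G/H_S$ is odd, so $\dim_\Q\Heven(G/H_S)=\dim_\Q\Hodd(G/H_S)=2$, and graded-commutativity over $\Q$ already forces the single positive-even-degree generator to square to zero.
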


\vspace{-0.4em}

\afterpage{%
\begin{landscape}
	\begin{table}[htbp]
		\caption{Irreducible {\ccpair}s \GH 
        with \revision{$\pi_1(G/H) = 0$ and}
            $\H(G/H) \iso \H(S^n \x S^m)$ 
			for $n\geq 3$ odd and $m\geq 2$ even}%
		\labell{table:Kramer+Wolfrom}
		\labell{table:main}
		\begin{center}
			\resizebox{\columnwidth}{!}
			{
				\begin{tabular}{	>{$}l<{$} >{$}r<{$}|
						>{$}l<{$}>{$}c<{$}|
						l|
						>{$}l<{$}|
						>{$}l<{$}|
						l|							
						>{$}l<{$}|
						>{$}l<{$} 
					}
					G && H &&
					\makecell{isotropy-formal?}
					& Z_G(H)^0  
					& H_S 
					& $H_S \geq H$?
					& (m,n) 
					& G/H\\
					\hline
					
					\SU(3)	&
					& \makecell{
						i_{p,q}\U(1),\\
						\quad p,q \mbox{ coprime}
					}& 
					& \makecell{
						If and only if \\
						$\{p,q\} = \{0,\pm 1\}$\\ 
						\phantom{
							$\{p,q\}$}\,or $\{1,-1\}$
					}
					& \makecell{
						i_{p,q}\U(1)\cdot \SU(2)_{p+2q,-q-2p}\\
						\phantom{\U(1)^2}\quad    \text{if $p\cdot q \in \{ 1,-2 \}$;}\\
						\U(1)^2 \quad \text{otherwise}
					}
					& \makecell{
						\SU(2)_{p,q} \\
						\phantom{H} \quad \text{if $p\cdot q \in \{ -1,0 \}$;}\\
						H \quad \text{otherwise}
					}
					& \makecell{
						\yesgreater\\
						\\
						\yesequal
					}
					&(2,5) 
					&
					\\
					
					\hline

					\Sp(2)	&		
					& \makecell{
						i_{p,q}\U(1),\\
						\quad p,q \mbox{ coprime}
					}&
					& \checkmark
					& \makecell{
						i_{p,q}\U(1)\cdot \Sp(1)_{q,-p}\\
						\phantom{\U(1)^2}\quad   \text{if $p\cdot q \in \{ 0,\pm 1 \}$;}\\
						\U(1)^2 \quad \text{otherwise}
					}
					& \makecell{
						\Sp(1)_{p,q} \\
						\phantom{H} \quad \text{if $p\cdot q \in\{ 0,\pm 1 \}$;}\\
						H \quad \text{otherwise}
					}
					& \makecell{
						\yesgreater\\
						\\
						\yesequal
					}&(2,7) 
					&
					\\
					
					\hline
					G_2	&
					& \makecell{
						i_{p,q}\U(1),\\
						\quad p,q \mbox{ coprime}
					}&
					& \checkmark
					& \makecell{
						i_{p,q}\U(1)\cdot \SU(2)_{q,-p}\\
						\phantom{\U(1)^2}\quad   \text{if $p\cdot q = 0$;}\\
						\U(1)^2 \quad \text{otherwise}
					}
					& \makecell{
						\SU(2)_{p,q} \\
						\phantom{H} \quad \text{if $p\cdot q = 0$;}\\
						H \quad \text{otherwise}
					}
					& \makecell{
						\yesgreater\\
						\\
						\yesequal
					}&(2,11) 
					&
					\\
					
					\hline

					G' \x \Sp(1)&
					&\makecell{
						H' \. i_{p,q}\U(1),										\\
						\quad p,q \mbox{ coprime}\\
						\quad \ \& \ \Cen_{G'}(H')^0 \neq \trivialgroup \\ 
					}
					&
					& \makecell{
						Unless  
						$(G',H') $\\
						$= \big(\SU(k\mn+\mn1),\SU(k)\mnn\big)$\\
						for $k \geq 2$
					}	
					& \U(1)^2 
					& \makecell{
						H' \. \Sp(1)_{p,q} 					
						\\
						\phantom{H} \quad \mbox{if } Z_{G'}(H')^0 = A_1			\\
						\phantom{H} \quad		
						\mbox{ \& } p\.q = \pm 1	;				\\
						H \quad \text{otherwise}
					}
					& \makecell{
						\yesgreater\\
						\\
						\\
						\yesequal
					}&(2,\dim G'/H') 
					&
					\\
					
					\hline

					K_1 \x K_2&& \multicolumn{2}{l|}{$H_1 \x H_2$} &
					\checkmark &
					\makecell{S^1 \x Z_{K_2}(H_2)^0 \mbox{ if }K_1 = A_1\\
					{}\phantom{S^1 \x {}}{} Z_{K_2}(H_2)^0 \mbox{ otherwise}}
					& K_1\x H_{S_2}
					&
					\!\!\!$\mn$ $H_{S_2} \mn \geq \mn H_2\mathrlap?$\!\!
					&(\dim K_1/H_1,\dim K_2/H_2) & K_1/H_1 \x K_2/H_2\\

					\hline
					\hline
					
					\SU(5)  && \multicolumn{2}{l|}{$\SU(2) \+ \SU(3)$}& 
					\Negative		&
					\U(1) & H
					&\yesequal
					&(4,9) & \widetilde G_2(\C^5)\\
					\SU(4) && \multicolumn{2}{l|}{$\SU(2) \+ \SU(2)$}&
					\checkmark
					& \U(1) & H
					&\yesequal
					&(4,5) & \widetilde G_2(\C^4)\cong V_2(\R^6)\\
					&& \multicolumn{2}{l|}{$\SO(4)$} & 
					\checkmark		&
					\trivialgroup & \SU(2) \+ \SU(2)
					&
					\Smallneg
					&(4,5) & \widetilde G_3(\R^6)\\
					\hline
					
					\SO(2k+1), 
					& k \geq 3 & \multicolumn{2}{l|}{$\SO(2k-2)$} &
					\checkmark		&
					\SO(3) & \SO(2k-1)
					&\yesgreater
					&(2k\mnn-\mn2,4k\mnn-\mn1) & V_3(\R^{2k+1})\\

					\Spin(9) && \multicolumn{2}{l|}{$\SU(4)$} &
					\checkmark		&
					\U(1) & H
					&
					\yesequal
					&(6,15) &  \\
					\Spin(7) && \multicolumn{2}{l|}{$\SU(3)$} &
					\checkmark		&
					\U(1) & H
					&
					\yesequal
					&(6,7) &  V_2(\R^8) \iso S^6 \x S^7 \\
					\SO(7) && \multicolumn{2}{l|}{$\SO(3)\+ \SO(3) \+ [1]$} &
					\checkmark		&
					\trivialgroup & \SO(5)
					&
					\Smallneg
					&(4,11) & \\
					&& \SO(3)\+ \SU(2)&&
					\Negative		
					&
					\U(1)
					& H
					&
					\yesequal
					&(4,11) & \\
					\hline
					
					\Sp(4) && \multicolumn{2}{l|}{$\SU(4)$} & 
					\checkmark		&
					\U(1)
					& H
					&
					\yesequal
					&(6,15) &  \\
					\Sp(3) && \multicolumn{2}{l|}{$\SU(3)$} & 
					\checkmark		&
					\U(1)& H
					&
					\yesequal
					&(6,7) &  \\
					&& \multicolumn{2}{l|}{$\Sp(1)\+\Sp(1)\+ [1]$} & 
					\checkmark		&
					[1] \+[1] \+\Sp(1) & \Sp(2) \+ [1]
					&
					\yesgreater
					&(4,11) &  \\
					&& 
					\makecell{
						\Sp(1)\+\D_2\Sp(1)	\\ 
						\qquad \mbox{(or }	\Sp(1)\+\SU(2)_2\mbox{)}
					} &&  		
					\Negative &
					[1]\+\SO(2) 
					& H
					&
					\yesequal
					&(4,11) &  \\
					&& \multicolumn{2}{l|}{$\Sp(1)\+\SU(2)_{10}$} &	
					\Negative	&
					\trivialgroup & \Sp(1)\+i_{3,1}\U(1)
					&
					\Smallneg
					&(4,11) &  \\
					&& \multicolumn{2}{l|}{$\SO(3)\.\D_3\Sp(1)$} &
					\Negative		&
					\trivialgroup & \SU(2)\.\D_3\U(1)
					&
					\Smallneg
					&(4,11) &  \\
					\hline
					
					\SO(2k), & k \geq 5 & \multicolumn{2}{l|}{$\SO(2k-2)$} & 
					\checkmark		&
					\SO(2) & H
					&
					\yesequal
					&(2k\mnn-\mn2,2k\mnn-\mn1) & V_2(\R^{2k})	\\
					\Spin(10) && \multicolumn{2}{l|}{$\SU(5)$} &
					\Negative		&
					\U(1)  & H
					&
					\yesequal
					&(6,15) &  	\Spin(9)/\SU(4)	\\
					\Spin(8) && \multicolumn{2}{l|}{$\SU(4)$} &
					\checkmark		&
					\U(1) & H
					&
					\yesequal
					&(6,7) & \Spin(7)/\SU(3)	\\
					\SO(8) && \multicolumn{2}{l|}{$\SO(6)$} &
					\checkmark		&
					\SO(2) & H
					&
					\yesequal
					&(6,7) & \Spin(7)/\SU(3)	\\
					\SO(6)	&&	\multicolumn{2}{l|}{$\SO(4)$} & 
					\checkmark		&
					\SO(2) & H
					&
					\yesequal
					&(4,5)& V_2(\R^6)\iso \widetilde G_2(\C^4) \\
					\hline 
					
					F_4 && \multicolumn{2}{l|}{$\Spin(7)$} 
					& \Negative
					&
					\SO(2) & H
					&
					\yesequal
					&(8,23) &  \\
					&& \multicolumn{2}{l|}{$\Sp(3)$} 
					& \Negative 
					&
					\Sp(1) & H
					&
					\yesequal
					&(8,23) &  \\
					\hline
					\hline
					
					\Sp(k) \x \Sp(2), 
					& k\geq 2 & \multicolumn{2}{l|}{$\Sp(k-1) \x \D\Sp(1) \x \Sp(1)$}
					& \Negative
					&
					\trivialgroup & \Sp(k-1) \x \D\U(1) \x \Sp(1)
					&
					\Smallneg
					&(4,4k\mnn-\mn1) & 
					\\
					\hline
					
					G_2 \x \Sp(2)&& \multicolumn{2}{l|}{$\SU(2)_1 \. \D\Sp(1) \. \Sp(1)$}
					& \Negative
					&
					\trivialgroup & \SU(2)_1 \. \D\U(1) \. \Sp(1)
					&
					\Smallneg
					&(4,11) &  \\
					
					&& \multicolumn{2}{l|}{$\SU(2)_3 \. \D\Sp(1) \. \Sp(1)$}
					& \Negative
					&
					\trivialgroup & \SU(2)_3 \. \D\U(1) \. \Sp(1)
					&
					\Smallneg
					&(4,11) &  \\
				\end{tabular}
			}
		\end{center}
	\end{table}%
\end{landscape}%
}

Evidently \Cref{thm:spheres} is responsible for Step 3.
The classification in the irreducible case in \Cref{sec:cases}
can be extended to the virtually effective case with the material of 
\Cref{sec:enlarge},
and \isotfity in this case is determined with some 
additional information involving~$\revision{\ol H}$ in \Cref{thm:alg}
and the quotient 
$Z_{\revision{\ol G}}(\revision{\ol H})^0 \revision{\ol H}/\revision{\ol H}$.
The considerations in Step 4 may be summarized as follows;
notation is explained in \Cref{rmk:table-explanation0} and 
\Cref{rmk:table-explanation}.


\begin{restatable}{theorem}{isotfclassification}\labell{thm:isotfclassification}
	Let
	\GH
	be a pair of 
	compact, connected Lie groups
	such that $G/H$ 
	has the rational homotopy type of 
	a product $S^n \x S^m$ with $n \geq 3$ odd and $m$ even.
	All irreducible such pairs
	(considered up to covers 
	in the sense of \Cref{def:cover})
	are listed in \Cref{table:Kramer+Wolfrom},
	and the isotropy-formal pairs among them are noted.
	All reducible but virtually effective such pairs
	(up to covers) are obtained
	as \[\Big(\mnn G \x P,(H \x \trivialgroup) \. 
						\big\{(p,p) : p \in P\big\}\mn\Big)\]
	for \GH in \Cref{table:Kramer+Wolfrom}
	and $P$ a rank-one subgroup of $Z_G(H)^0$
	not contained in $H$.
	Existence of such subgroups can be determined from the $Z_G(H)^0$
	column of the table.
	When such a $P$ exists, it is isomorphic 
	to one of $\U(1)$, $\SO(3)$, and $\SU(2)$,
	and the equivalence class of the pair depends only on
	the isomorphism type of $P$.
	Such a pair is \isotf if and only if $H_S > H$,
	as also noted in the table. 
\end{restatable}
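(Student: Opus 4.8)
The plan is to obtain \Cref{thm:isotfclassification} by assembling three ingredients that are already in place: the irreducible classification of \Cref{sec:cases}, the inflation machinery of \Cref{sec:enlarge}, and the \isotfity criterion of \Cref{thm:spheres}. I would dispatch the irreducible case first, then reduce the reducible case to it, and finally transfer the formality criterion. For the irreducible pairs I would simply invoke \Cref{sec:cases}: its case-by-case analysis shows that the irreducible \ccpairs \GH with $\H(G/H)\cong\H(S^n\x S^m)$ ($n$ odd, $m$ even) are exactly the rows of \Cref{table:Kramer+Wolfrom}. Each is corank-one, so \Cref{thm:spheres}, part~\ref{item:product}, makes it \isotf precisely when $|N|\ne|W_H|$; computing $H_S$ via \Cref{thm:HS} and using part~(4), I would verify in each row that this is equivalent to $H_S>H$, which is what the ``isotropy-formal?'' column records.

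For a reducible but \ve pair \GH with $G/H$ of the rational homotopy type of $S^n\x S^m$, the strategy is to run \Cref{sec:enlarge}: after passing to covers (\Cref{def:cover}) a proper normal subgroup acts transitively, and I would peel off a complementary factor to land in the asserted form $\big(G\x P,\,(H\x\trivialgroup)\.\{(p,p):p\in P\}\big)$ with $P\sub Z_G(H)^0$ and $P\not\sub H$. Two points carry this. First, the orbit map $(g,q)\mapsto gq\-H$ descends to a homeomorphism $(G\x P)/L\simto G/H$ (well defined because $P$ centralizes $H$), so the underlying space is unchanged and $G\x\trivialgroup$ already acts transitively, confirming reducibility. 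Second, virtual effectiveness forces the composite $P\to Z_G(H)^0\to Z_G(H)^0/\big(Z_G(H)^0\inter H\big)$ to have finite kernel, since otherwise $\{1\}\x(P\inter H)^0$ would be an infinite normal subgroup of $G\x P$ sitting inside $L$. As the target has rank one for a sphere-product pair, this pins $P$ down to rank one, hence to one of $\U(1)$, $\SU(2)$, $\SO(3)$; I would then conjugate the embedding within $Z_G(H)^0$ and absorb the residual twist into a cover of the $P$-factor to see that the pair depends, up to covers, only on the isomorphism type of $P$.

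The formality criterion I expect to transfer cleanly. The inflated pair is again corank-one, with maximal tori $(T\x T_P,\,S\x T_P)$ and $(G\x P)/L\cong G/H$ of the rational homotopy type of $S^n\x S^m$, so \Cref{thm:spheres}, part~\ref{item:product}, applies and makes it \isotf iff $|\widehat N|\ne|W_L|$ for $\widehat N=\pi_0 N_{G\x P}(S\x T_P)$. Because $N_{G\x P}(S\x T_P)=N_G(S)\x N_P(T_P)$, I would read off $\widehat N\cong N\x W_P$, and from $L\cong H\x P$ read off $W_L\cong W_H\x W_P$, so the $|W_P|$ factors cancel and $|\widehat N|\ne|W_L|\Leftrightarrow|N|\ne|W_H|$; by the irreducible case this last condition is $H_S>H$, giving the table's verdict for the reducible \ve pairs as well.

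The hard part will be the middle step, and specifically the input I am borrowing from \Cref{sec:enlarge}: extracting from the product-of-spheres hypothesis that $Z_G(H)^0/\big(Z_G(H)^0\inter H\big)$ has rank one, so that virtual effectiveness really does force $P$ down to a single rank-one type, and confirming that all admissible embeddings of that type yield equivalent pairs up to covers. Once that structural fact is in hand, the remaining steps are bookkeeping on component groups and Weyl groups.
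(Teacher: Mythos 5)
There are two genuine errors in this proposal, both concerning how the \isotfity criterion is transferred, and they change the answer.

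First, for the irreducible rows you propose to ``verify in each row that [$|N|\neq|W_H|$] is equivalent to $H_S>H$, which is what the `isotropy-formal?' column records.'' It is not: \Cref{table:Kramer+Wolfrom} has \emph{separate} columns for \isotfity and for $H_S\geq H$, and they disagree. By \Cref{thm:spheres}.\ref{item:product} an irreducible \rspp is \isotf iff $N\neq W_H$; the condition $H_S>H$ is sufficient for this (then $W_H<W_{H_S}\leq N$) but not necessary, because when $H=H_S$ one can still have $N=\ang{W_{H_S},w_0|_\fs}>W_{H_S}=W_H$. Concretely, $\big(\Sp(2),i_{p,q}\U(1)\mnn\big)$ with $p\.q\notin\{0,\pm1\}$ has $H=H_S$ yet is \isotf. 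The equivalence ``\isotf $\iff H_S>H$'' is asserted in the theorem only for the reducible \ve pairs, and that is exactly the content of \Cref{thm:enlargement-isotf}, not of \Cref{thm:spheres}.

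Second, and more seriously, your transfer of the criterion to the enlarged pairs rests on $N_{G\x P}(S\D T_P)=N_G(S)\x N_P(T_P)$, hence $\widehat N\iso N\x W_P$ and the cancellation $|\widehat N|/|W_L|=|N|/|W_H|$. This is false: the maximal torus of $(H\x\trivialgroup)\.\D P$ is the \emph{diagonal} torus $S\D T_P$, and $(g,p)$ normalizes it only if the actions of $g$ and $p$ on the $T_P$-coordinate are compatible; the projection to $G$ in fact embeds $\widehat N$ into $N$ rather than exhibiting it as $N\x W_P$. In particular, when $H=H_S$ and $w_0^G\.v=-v$, the element $(n_0,1)$ representing $(w_0^G,\id)$ sends $(v,v)\in\D\fs^\perp$ to $(-v,v)\notin\fs+\D\fs^\perp$ and so does \emph{not} lie in $\widehat N$ --- this is precisely the computation in the proof of \Cref{thm:enlargement-isotf} showing that the enlargement of a pair with $H=H_S$ fails to be \isotf even when the pair itself is \isotf. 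Your cancellation would instead conclude that the enlargement is \isotf iff the original is, which gives the wrong verdict for, e.g., $\big(\Sp(2)\x\U(1),\,i_{p,q}\U(1)\D\U(1)\big)$ with $p\.q\notin\{0,\pm1\}$. A smaller point: the rank-one claim you single out as ``the hard part'' is immediate from $\rk G-\rk H=1$ via \Cref{thm:deg} and \Cref{thm:enlargement-corank1}; the genuinely laborious content of the paper's proof is the row-by-row determination of $Z_G(H)^0$, $H_S$, and $N$ versus $W_H$ in \Cref{sec:isotf}, which cannot be dismissed as bookkeeping since several cases require \emph{ad hoc} arguments.
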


\brmk
\revision{As the construction of the associated maximal pair,
the computation of the fundamental group and cohomology ring,
and the construction of 
the effective \revision{quotient}
and simply-connected cover of a pair
are all effectively computable tasks
and the table one consults is finite as well,
\Cref{thm:alg} in principle gives an effective algorithm for
determining whether a \ccpair $(G,K)$ is \isotf.
Thanks go to the referee for asking for this clarification.
}
\ermk

\smallskip

\nd\emph{Acknowledgments.} 
The first author thanks Federico Pasini 
for help with a program evaluating degree multisets 
for rational sphere products $G/H$,
Linus Kramer for verifying his analysis 
did not consider the cases $S^{\mr{even}}\x S^{\mr{odd}}$ with $\mr{even} \geq \mr{odd}$
and discussing the cases in which our results differed,
Oliver Goertsches for comments and suggestions on an early draft,
and Jason DeVito for useful conversations and literature references
on homogeneous spaces, 
especially the dissertation of Kamerich~\cite{kamerich}
and the argument in \Cref{rmk:example-discussion}\ref{rmk:S6S15'}.
The second author thanks Mychelle Parker for letting us use her 
Maple code to study various subalgebras of simple Lie algebras,
and thanks the  Fundamental Research Funds 
for the Central Universities of China (2020MS040, 2023MS078) for their support.
\revision{Both authors would additionally 
like to thank the anonymous referee for a careful 
and insightful reading
resulting in many corrections and clarifications.}

\section{Reductions and characterization}\labell{sec:spheres}

We have set ourselves the task of describing 
all the isotropy-formal pairs $(G,K)$ 
of compact, connected groups with $\rk K = \rk G - 1$.
We say $K$ and the pair $(G,K)$ are of \defd{corank one}.\footnote{\ 
	{N.B.} This terminology differs from Onishchik's usage~\cite[p.~207]{onishchik},
	in which the \emph{corank} is the rational dimension of the cokernel of the map
	$H^{\geq 1} G \to H^{\geq 1} K \epi H^{\geq 1}(K)\,/\,H^{\geq 1}(K)^2$.
}
We now embark on the voyage of reduction described in \Cref{thm:alg}.
As a first step, we may replace $K$ 
by its maximal torus $S$. 

\bthm[%
	{%
	\cite[Thm.~1.1]{carlson2018eqftorus}%
	\cite[Thm.~1.4, Prop.~3.12]{carlsonfok2018}%
	}]%
	\labell{thm:CF}
Let $(G,K)$ be a pair of compact, connected Lie groups
and $S$ a maximal torus of $K$. 
Then \GK is \isotf if and only if \GS is \isotf,
and if and only if either of the following hold:
\bitem
\item 
$G/S$ (equivalently, $G/K$) 
is formal in the sense of rational homotopy theory
and $\H(BG; \Q) \lt \H(BS; \Q)^{N_G(S)}$ is surjective;
\item 
$\H(BG; \Q) \lt \H(BS; \Q)^{N_G(S)}$ is surjective
and 
\revision{the action of the
component group $\pi_0 N_G(S)$ on the Lie algebra ${\fs}$ of $S$ 
induced from the conjugation action of $N_G(S)$ on $S$
 displays $\pi_0 N_G(S)$ as a reflection group.}
\eitem
\ethm

Of course, applying this result twice, we may replace $(G,K)$
in our considerations with any other pair $(G,H)$
such that $H$ shares a maximal torus with $K$.
There is a natural notion of equivalence of pairs:

\bdefn\labell{def:pairEquivalence}
Two pairs of Lie groups $(G_1,K_1)$ and $(G_2,K_2)$ are 
\defd{equivalent} if there is an isomorphism $f\:G_1\lt G_2$ such that 
$f(K_1)=K_2$. 
\edefn

\nd An equivalence induces a diffeomorphism
$G_1/K_1 \lt G_2/K_2$ taking the isotropy $K_1$-action to the isotropy $K_2$-action, so isotropy-formality depends only 
on equivalence classes of pairs.

Since conjugation by an element of $G$ is a Lie group automorphism,
all tori in $G$ lie in a maximal torus, and all maximal tori are conjugate,
we may assume the maximal torus $S$ of $K$ lies in 
some fixed maximal torus $T$ of $G$.
Then we may relate the Weyl group $W_G$ with respect to $T$
and the 
component group $N = \pi_0 N_G(S)$ of \Cref{def:sec1}.

\begin{definition}\labell{def:tN}
Let $G$ be a compact, connected Lie group with 
maximal torus $T$ containing a subtorus $S$.
We write $\defm{\tN}$ for the subgroup 
		$\big(\mnn N_G(S) \inter N_G(T)\mnn\big)/T$ of $W_G$.
\end{definition}

\begin{lemma}[{%
		\cite[Lem.~3.10, 4.3]{carlson2018eqftorus}%
	}]%
	\labell{thm:transition}
	With the notation set in \ref{def:sec1} and \ref{def:tN},
	the adjoint action of~$\tN$ on $\fs$ 
	factors as
$
	\tN \longepi N \longmono \Aut \fs. 
$
\end{lemma}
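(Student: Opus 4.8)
The plan is to exhibit the claimed factorization explicitly, after first identifying $N$ as $N_G(S)/Z_G(S)$. Conjugation defines a continuous homomorphism $N_G(S) \to \Aut(S)$, and $\Aut(S) \cong \GL_n(\Z)$ is discrete, so the connected group $N_G(S)^0$ maps to the identity and thus lies in the kernel $Z_G(S)$; since the centralizer of a torus in a compact connected Lie group is connected, in fact $N_G(S)^0 = Z_G(S)$, whence $N = \pi_0 N_G(S) = N_G(S)/Z_G(S)$. Because $S \subseteq T$ and $T$ is abelian we have $T \subseteq Z_G(S)$, so the projection $N_G(S) \inter N_G(T) \to N$ annihilates $T$ and factors through $\tN$; this produces the map $\tN \to N$. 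It remains to prove that this map is surjective and that the induced $N$-action on $\fs$ is faithful.

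Faithfulness, i.e.\ the injection $N \longmono \Aut\fs$, is the easy half. An element $n \in N_G(S)$ acts trivially on $\fs = \Lie(S)$ precisely when $\Ad(n)$ fixes $\Lie(S)$ pointwise, and since $\exp$ carries $\fs$ onto $S$, this holds exactly when conjugation by $n$ is the identity on $S$, that is, when $n \in Z_G(S)$. Hence the kernel of $N \to \Aut\fs$ is trivial.

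The surjection $\tN \longepi N$ is the main step, and the conjugacy of maximal tori is where the real content lies. Given a class in $N$, represent it by $n \in N_G(S)$; I would modify $n$ within its $Z_G(S)$-coset so that it additionally normalizes $T$. Observe that $T$ is a maximal torus of $Z_G(S)$, since any torus of $Z_G(S)$ containing $T$ is a torus of $G$ containing the maximal torus $T$, hence equals $T$. Moreover conjugation by $n$ preserves $Z_G(S)$, because $n Z_G(S) n^{-1} = Z_G(n S n^{-1}) = Z_G(S)$, so $n T n^{-1}$ is again a maximal torus of the compact connected group $Z_G(S)$. By conjugacy of maximal tori there is some $z \in Z_G(S)$ with $z (n T n^{-1}) z^{-1} = T$. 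Then $zn$ normalizes both $S$ and $T$, so $zn \in N_G(S) \inter N_G(T)$, while $zn$ and $n$ determine the same class in $N = N_G(S)/Z_G(S)$. This proves surjectivity.

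Finally I would note that the composite $\tN \to N \to \Aut\fs$ is exactly the adjoint action: it is well defined because every $t \in T$ has $\Ad(t)$ trivial on $\ft \supseteq \fs$, and on a representative $n$ it is $\Ad(n)|_{\fs}$ by construction. The one genuinely nontrivial ingredient is the conjugacy-of-maximal-tori argument inside $Z_G(S)$ that yields surjectivity; the remaining assertions are formal consequences of the identification $N = N_G(S)/Z_G(S)$.
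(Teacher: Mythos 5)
Your proof is correct and complete: the identification $N=N_G(S)/Z_G(S)$ via connectedness of torus centralizers, the injectivity of $N\longmono \Aut\fs$ from the fact that a homomorphism of the connected group $S$ is determined by its differential, and the surjectivity of $\tN\longepi N$ by conjugating $nTn\-$ back to $T$ inside the compact connected group $Z_G(S)$ are exactly the standard argument. Note that the paper itself offers no proof here, citing \cite[Lem.~3.10, 4.3]{carlson2018eqftorus} instead; your reasoning matches the argument given there.
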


Thus $N$ can be identified as a subquotient of $W_G$.
If $S$ is a maximal torus of some closed, connected subgroup $K$ of $G$,
then $N_K(S)$ is contained in $N_G(S)$,
so $W_K$, viewed as a subgroup of $\Aut \fs$,
is a subgroup of $N$.
A well-known
characterization~\cite[IV.5.5, XII.3.4]{borel1960seminar} of \eqfity of 
the action of a torus $T$ on a compact space $X$
in terms of the sums of Betti numbers of $X$ and of $X^T$
reduces in our case $T = S$ and $X = G/K$ to the following:

\begin{theorem}[Shiga--Takahashi~%
				{\cite[Sec.~2]{shigatakahashi1995}};
				Goertsches--Noshari~%
				{\cite[Prop.~2.3]{goertschesnoshari2016}}; 
				Carlson~%
				{\cite[Prop.~3.11]{carlson2018eqftorus}} 
				for this phrasing]%
				\labell{thm:dim-criterion}
	With the notation set in \ref{def:sec1},
	we have
	\[
	\dim_\Q \H(G/K) 
		\geq 
	2^{\rk G - \rk K}
		[N:W_K]
		\mathrlap,
	\]
	and the left translation action of $S$ (hence $K$) on $G/K$ 
	is equivariantly formal
	if and only if 
	equality holds.
\end{theorem}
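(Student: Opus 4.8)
The plan is to reduce the statement to a single Betti-number computation and then feed it into the classical Borel characterization of \eqfity already invoked in the text. For the torus $S$ acting on the compact manifold $G/K$ by left translation, that characterization~\cite[IV.5.5, XII.3.4]{borel1960seminar} supplies the inequality
\[
\dim_\Q \H(G/K) \;\geq\; \dim_\Q \H\big((G/K)^S\big),
\]
with equality if and only if the action is \eqf. So everything comes down to showing that the total Betti number of the fixed set is exactly $2^{\rk G - \rk K}[N:W_K]$; once that is in hand, both the asserted bound and the equivalence follow by substitution.

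First I would identify the fixed set. A coset $gK$ is $S$-fixed precisely when $g\-Sg \sub K$; since $g\-Sg$ is then a maximal torus of $K$ and all maximal tori of $K$ are $K$-conjugate, every fixed coset has a representative lying in $N_G(S)$. This yields the identification
\[
(G/K)^S \;=\; N_G(S)\big/\big(N_G(S)\inter K\big) \;=\; N_G(S)/N_K(S),
\]
a compact (possibly disconnected) submanifold that is homogeneous under $N_G(S)$.

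Next I would count the components and compute the cohomology of one. Because $\Cen_G(S)$ is connected and $N = N_G(S)/\Cen_G(S)$ is finite, the identity component of $N_G(S)$ is exactly $\Cen_G(S)$; hence the identity component of the fixed set is $\Cen_G(S)\big/\big(\Cen_G(S)\inter N_K(S)\big) = \Cen_G(S)/S$, using that the centralizer in $K$ of its own maximal torus $S$ is $S$ itself. The components are permuted transitively by $N_G(S)$, so all are diffeomorphic to $\Cen_G(S)/S$, and their number is the index in $N$ of the image of $N_K(S)$; by \Cref{thm:transition} that image is precisely $W_K$, giving $[N:W_K]$ components. Finally $S$ is a central subtorus of $\Cen_G(S)$ of corank $\rk G - \rk K$, so $\Cen_G(S)/S$ is a compact connected Lie group of rank $\rk G - \rk K$, whose rational cohomology is exterior on that many odd generators and therefore of dimension $2^{\rk G - \rk K}$. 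Multiplying by the component count gives $\dim_\Q \H\big((G/K)^S\big) = 2^{\rk G - \rk K}[N:W_K]$, which closes the argument.

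The load-bearing steps are the two structural identifications—recognizing $(G/K)^S$ as $N_G(S)/N_K(S)$, and pinning the component count to the index $[N:W_K]$ rather than to some other subquotient of $N$—whereas the dimension $2^{\rk G - \rk K}$ of each component and the Borel inequality are essentially off the shelf. The only real care needed, and thus the main obstacle, is the bookkeeping of $\Cen_G(S)\inter N_K(S) = S$ and (through \Cref{thm:transition}) of the image of $N_K(S)$ in $N$ being exactly $W_K$, so that the index emerges as stated.
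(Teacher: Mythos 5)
Your argument is correct and is exactly the route the paper intends: it quotes this theorem from the literature, noting just beforehand that it is Borel's Betti-number characterization of equivariant formality applied to $T=S$, $X=G/K$, which is precisely your reduction, and your computation of $(G/K)^S \cong N_G(S)/N_K(S)$ with $[N:W_K]$ components each diffeomorphic to the rank-$(\rk G-\rk K)$ group $Z_G(S)/S$ is the standard (and correct) way to evaluate $\dim_\Q \H\big((G/K)^S\big)$. No gaps; the key identifications $Z_G(S)\inter N_K(S)=S$ and image of $N_K(S)$ in $N$ equal to $W_K$ are handled properly.
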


\subsection{Reduction to the regular case}\labell{sec:regular}
%
We now justify the group $H_S$ occurring in 
\Cref{def:sec1} and \Cref{thm:spheres,thm:alg}.

The essential special characteristic of the case 
$\dim S = \dim T - 1$
is that there exists a character $T \lt S^1$ whose kernel is $S$,
and the Lie algebra $\fs$ of $S$ can be 
identified as the kernel of the derivative of this character, 
a functional $\weight\: \ft \lt \R$
on the Lie algebra $ \ft$ of $T$.
We can normalize $\a$ to lie in a fundamental domain for
the Weyl group action as follows.
Conjugating $S$ by an element $w \in N_G(T)$
replaces the functional $\weight$ by $w\. \weight$, 
so that fixing a basis $ \D$ of simple roots for $G$,
we may assume $\weight$ lies in the 
{closed fundamental dual Weyl chamber} in $\ft\dual$
consisting of functionals 
whose invariant inner product
with each $\simple \in \D$ is nonnegative.
Equivalently, we may consider the unique vector 
${v} \in \ft$
such that $-B(v,-) = \a$, where $B$ is an $\Ad(G)$-invariant 
negative-definite bilinear form,
and then our normalization constrains $v$ to lie in the
{closed fundamental Weyl chamber} $\ol C$
of vectors on which each $\simple \in \D$ is nonnegative.

Thus, by \Cref{thm:CF}, isotropy-formality of corank-one pairs $(G,K)$ 
can be determined wholly in terms of the group $G$
and a nonzero vector $v$ in the closed fundamental Weyl
chamber with respect to some maximal torus 
and some basis of simple roots for $G$.
We fix this notation for the rest of this subsection.

\begin{notation}\labell{def:sec2}
	Let $G$ be a compact, connected Lie group 
	and $T$ a fixed maximal torus as in \Cref{def:sec1}.
	We write
\bitem
	\item $\defm \Delta$ for a basis of simple roots for $G$ in 
	the dual $\defm{\ft\dual}$ to the Lie algebra $\ft$ of $T$,
	\item $\defm{\ol C} \subn \ft$ for the closed fundamental Weyl chamber
				$\{u \in \ft : \g(u) \geq 0 \mbox{ for all } \g \in \D\}$,
	\item $\defm v \in \ft$ for a fixed nonzero vector in $\ol C$,	
	\item $\defm {\D_v} \subn \D$ for the set of simple roots annihilating $v$,
	\item $\defm{W_v}$ 
		for the stabilizer of $v$ 
		in 
		$W_G$,
		generated by reflections with respect to $\g \in \D_v$,\footnote{\
		See, \emph{e.g.}, Adams~\cite[Thm.~5.13(vii), notation from Defs.~4.13, 4.38]{adamsLiebook}.}
	\item $\defm B$ for a negative-definite $\Ad(G)$-invariant bilinear form on $\ft$,\footnote{\ This can be taken to be the Killing form if $G$ is semisimple
	and otherwise may be taken as the direct sum of the Killing form
	and the negative of an arbitrary inner product on the center $\f z(\f g)$.}
	\item $\defm \weight \in \ft\dual$ for the functional $-B(v,-)\: \ft \lt \R$,
	\item $\defm \fs = (\R v){}\defm{{}^\perp} < \ft$ 
		for the $B$-orthogonal complement of $\R v$ in $\ft$,
	\item $\defm S = \exp \fs$ for the connected subgroup of $T$
			whose Lie algebra is $\fs$.
\eitem
Although the logical dependency between the defined symbols is different
than in \Cref{def:sec1}, the notations are compatible
when $v$ is chosen such that $\exp \R v$ is closed (hence a circle) in $T$,
in which case $S$ is a complementary codimension-one subtorus of $T$.
\end{notation}

The stabilizer $W_v$ is closely related to the group $\tN$ 
of \Cref{def:tN} and \Cref{thm:transition}.
As the tangent space $\fs < \ft$ is defined to be $\ker \weight = (\R v)^\perp$,
and since $-B$
is $W$-invariant,
it follows an element $w \in W$ stabilizes $\fs$, 
and hence lies in $\tN$,
if and only if it stabilizes $\{\pm v\}$.
In symbols, $\tN = W_{\{\pm v\}}$.

%
%
%

\begin{corollary}\labell{thm:w0}
	With the notation set in \ref{def:sec1}, \ref{def:tN}, and \ref{def:sec2},
we have
\[
\tN 
=
\case{\ang{W_v,w_0} 		&	\mbox{if }	w_0\. v = -v\mathrlap,\\
	W_v					&	\mbox{otherwise}%
	\mathrlap,
}
\quad\qquad\mbox{and hence}\qquad\quad
N 
=
\case{\ang{W_v|_{\fs},w_0|_{\fs}} 		&	\mbox{if }	w_0\. v = -v\mathrlap,\\
	W_v|_{\fs}				&	\mbox{otherwise}%
	\mathrlap.
	}
\]
\end{corollary}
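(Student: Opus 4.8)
The plan is to build on the identification $\tN = W_{\{\pm v\}}$ established just before the statement, which reduces the corollary to an elementary computation of the set-stabilizer $W_{\{\pm v\}}$ in terms of the point-stabilizer $W_v$ and the longest word $w_0$. First I would record the two standard facts about the $W$-action I intend to use: each $W$-orbit in $\ft$ meets the closed fundamental chamber $\ol C$ in exactly one point (the closed chamber is a fundamental domain), and $w_0$ is the unique element of $W$ carrying $\ol C$ onto $-\ol C$, equivalently the unique element sending every positive root to a negative one.

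The crux is the equivalence: there exists $w \in W$ with $w \cdot v = -v$ if and only if $w_0 \cdot v = -v$. The backward direction is immediate. For the forward direction, suppose $w \cdot v = -v$. Since $w_0(\ol C) = -\ol C$ and $v \in \ol C$, we have $-v \in -\ol C$, so $w_0^{-1} \cdot (-v) \in \ol C$; this point also lies in the $W$-orbit of $v$, because $-v = w \cdot v$ does. As $v$ itself lies in $\ol C$, uniqueness of the chamber representative forces $w_0^{-1} \cdot (-v) = v$, that is, $w_0 \cdot v = -v$. This is the one genuinely substantive step, though it is short once the orbit-uniqueness fact is in hand; I expect correctly invoking that fundamental-domain property to be the only real point requiring care.

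With the equivalence in place, a two-case analysis finishes the computation of $\tN = W_{\{\pm v\}}$. If $w_0 \cdot v \neq -v$, then by the equivalence no element of $W$ sends $v$ to $-v$, so every element stabilizing $\{\pm v\}$ must fix $v$, giving $\tN = W_v$. If $w_0 \cdot v = -v$, then $w_0 \in W_{\{\pm v\}}$, whence $\ang{W_v, w_0} \subseteq \tN$; conversely, for any $w \in \tN$ either $w \cdot v = v$, so $w \in W_v$, or $w \cdot v = -v$, in which case $w_0^{-1} w$ fixes $v$ and $w \in w_0 W_v$. Either way $w \in \ang{W_v, w_0}$, so $\tN = \ang{W_v, w_0}$.

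Finally, to pass from $\tN$ to $N$ I would invoke \Cref{thm:transition}, which realizes $N$ as the image of $\tN$ under the restriction homomorphism $w \mapsto w|_{\fs}$ (every element of $\tN$ stabilizes $\fs$ by construction). Since the image of a subgroup generated by a set is generated by the images of the generators, applying this homomorphism to the two cases above yields $N = W_v|_{\fs}$ and $N = \ang{W_v|_{\fs}, w_0|_{\fs}}$ respectively. The only bookkeeping point is that the dichotomy is governed by the same condition $w_0 \cdot v = -v$ computed upstairs in $\tN$, so no separate case analysis is needed downstairs in $N$.
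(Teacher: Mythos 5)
Your proof is correct and follows essentially the same route as the paper's: both hinge on the fact that each $W_G$-orbit meets each closed Weyl chamber exactly once, so that $-v \in -\ol C = w_0\.\ol C$ forces $w_0$ to be among any elements sending $v$ to $-v$, and then conclude via the identification $\tN = W_{\{\pm v\}}$. The only cosmetic difference is that you finish with an explicit coset decomposition $\tN = W_v \cup w_0 W_v$ where the paper computes the index $[\tN:W_v] = |\tN\.v|$; these are interchangeable.
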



\bpf
Recall~\cite[Thm.~5.13, Cor.~5.16]{adamsLiebook}
that $W_G$ acts simply transitively on the collection of closed Weyl chambers
and that each orbit $W_{\revision{G}}\.v$ meets each closed Weyl chamber 
in precisely one point.
Since $-v$ lies in $-\ol C = w_0 \. \ol C$
and $w_0\. v$ is the unique point of the orbit~$W_G\. v$ 
lying in $w_0\.\ol C$,
it follows that if there exist any elements of $W_G$ 
sending $v$ to $-v$, 
then $w_0$ is among them. 
If so, the index~$[\tN : W_v]$ is $|\tN \. v| = \bigl|\{\pm v\}\bigr| = 2$,
and so $\tN = \ang{W_v,w_0}$;
otherwise, $[\tN : W_v] = \bigl|\{v\}\bigr| = 1$.
\epf

\if and only ifalse
\begin{proof}[Longer proof]
	Recall~\cite[Thm.~5.13]{adamsLiebook}
	that $W_v$ acts simply transitively on the collection of closed Weyl
	chambers containing $v$.
	Any element $w$ of $\tN$ either fixes $v$ or sends it to $-v$,
	which lies in the closed Weyl chamber $-\ol C$,
	so if there is some $w \in W$ for which $w\. v = -v$,
	then for some $w_v \in W_v$ we have $w_v w \. \ol C = - \ol C = w_0 \. C$.
	But since $W$ acts simply transitively 
	on the collection of all closed Weyl chambers,
	this implies $w_v w = w_0$,
	and hence $w_0 \.v  = w_v w \. v = w_v \. -v = -v$.
	Thus if any element of $W$ sends $v$ to $-v$,
	then $w_0$ does so, and if so,
	then $W_v$ is of index $2$ in $\tN = \ang{W_v,w_0}$.
\end{proof}
\fi

\begin{remark}
If we have $[\tN:W_v] = 2$, it does not necessarily follow
that $[N:W_v|_\fs] = 2$;
it can be the case that $w_0\.v = - w$ but $w_0|_\fs$ lies in $W_v|_\fs$.
\end{remark}

We follow Onishchik~\cite[pp.~61--5,~218--9]{onishchik} 
in associating another Lie group containing $S$.

\begin{definition}\labell{def:LH}
	With notation as in \Cref{def:sec1} and \ref{def:sec2},
	let 
	$\defm{S^\perp} \ceq \exp \R v$
	be the one-parameter subgroup of $T$ tangent to $\R v$ 
	at $1 \in T$,
	let
	$\defm{\Kl} \ceq Z_G(S^\perp)$ 
	be its centralizer in $G$,
	write $\defm{[\Kl,\Kl]}$ for the commutator subgroup
	of $\Kl$,
	and set $\defm{H_S} \ceq [\Kl,\Kl]\.S$.
	We call $(G,H_S)$ the \defd{\mrp} associated to $S$.
\end{definition}

The nomenclature is explained by the following definition and result.

\bdefn[Dynkin~\cite{dynkin1952}]\labell{def:regular}
A Lie subalgebra $\fk$ of $\fg$ 
is \defd{regular with respect to} a maximal abelian subalgebra~$\ft$ of $\fg$ 
if $[\ft,\fk]$ lies in $\fk$ 
(or in other words if $\ft$ lies 
in the normalizer $\mathfrak{n}_{\fg}(\fk)$) 
and \defd{regular} if $\mathfrak{n}_{\fg}(\fk)$ has the same rank as $\fg$. 
Equivalently, a closed, connected subgroup $K$ of $G$ is \defd{regular 
with respect to} a maximal torus~$T$ of $G$ 
if $T$ lies in the normalizer $N_G(K)$,
and \defd{regular} if $N_G(K)$ 
is of full rank in $G$.
\edefn


\begin{proposition}\labell{thm:HS}\labell{thm:Wv}
	With the notation set in \ref{def:sec2} and \ref{def:LH},
	we have the following.
\begin{enumerate}
\item\labell{thm:HS-1}
	The Weyl group $W_\Kl$ of $\Kl$ is the stabilizer $W_v$.
\item\labell{thm:HS-2}
	The group $\Hl$
	is the largest closed, connected subgroup
	of $G$ containing $S$ as a maximal torus
	and centralizing $S^\perp$.
	Equivalently, $\Hl$ 
	is the largest closed, connected subgroup of $G$ 
	regular with respect to $T$ and 
	containing $S$ as a maximal torus.
\item\labell{thm:WHS}
	The inclusion $\Hl \longinc \Kl$ 
	induces an isomorphism $W_{\Hl} \isoto W_\Kl$; 
	\emph{i.e.},
	 \hfill $W_{\Hl} = W_\Kl|{}_\fs = W_v|_\fs \iso W_v$.
\item\labell{thm:N}
With the notation set in \ref{def:sec1},
we have \hfill
$
N 
=
\case{
	\ang{W_{\Hl},w_0|_\fs} 		&	\mbox{if }	w_0\. v = -v,\\
	W_{\Hl}				&	\mbox{otherwise}.
}
$

Thus $N \neq W_{\Hl}$ 
holds if and only if both
$w_0\. v = -v$ and $w_0|_\fs \not \in W_{\Hl}$.
\end{enumerate}
\end{proposition}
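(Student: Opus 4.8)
The plan is to reduce all four claims to the root-space description of the centralizer $\Kl = Z_G(\St)$ together with the single identity $\a(v) = 0 \iff B(H_\a,v) = 0 \iff H_\a \in \fs$, where $H_\a \in \ft$ denotes the coroot of a root $\a$ of $G$. Since $\St = \exp \R v$ is a torus, $\Kl = Z_G(\St)$ is a connected full-rank subgroup containing $T$ whose root system with respect to $T$ is exactly $\{\a : \a(v) = 0\}$, so that $W_\Kl = \ang{s_\a : \a(v) = 0}$. For part~\ref{thm:HS-1} I would observe that each such $s_\a$ fixes $v$ (its displacement of $v$ is proportional to $\a(v)$), whence $\ang{s_\a : \a(v) = 0} \subseteq W_v$; the reverse inclusion is the standard presentation $W_v = \ang{s_\g : \g \in \D_v}$ of the stabilizer as the parabolic subgroup generated by the simple reflections annihilating $v$ (the fact already cited from Adams). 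This gives $W_\Kl = W_v$.

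Part~\ref{thm:HS-2} is the main obstacle. I would first record that $\Kl$ is reductive with $\fkl = \f z(\fkl) \oplus [\fkl,\fkl]$, that the semisimple part $[\fkl,\fkl]$ has toral algebra spanned by the coroots $H_\a$ with $\a(v) = 0$, and that by the displayed identity each such $H_\a$ lies in $\fs$; hence $[\fkl,\fkl] \cap \ft \subseteq \fs$, so that $S$ is a maximal torus of $\Hl = [\Kl,\Kl]\.S$, while $\Hl \subseteq \Kl$ centralizes $\St$. To see $\Hl$ is largest with these two properties, take any closed connected $H'$ with maximal torus $S$ centralizing $\St$; then $H' \subseteq Z_G(\St) = \Kl$, and the quotient map onto the abelianization torus $\Kl/[\Kl,\Kl]$ factors $H'$ through the image of its maximal torus $S$ (the center of the reductive algebra $\Lie H'$ lies in $\fs$, and commutators die in the target), forcing $H' \subseteq [\Kl,\Kl]\.S = \Hl$. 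For the equivalent regular description I would argue that a regular $H'$ with maximal torus $S$ has $\ft$-stable Lie algebra, a sum of $\fs$ and root spaces $\fg_\a$ with $\a \in \Phi'$; if some $\a \in \Phi'$ had $\a(v) \neq 0$, then $H_\a \notin \fs$, so $\fs + \R H_\a \subseteq \Lie H' \cap \ft$ would be a toral abelian subalgebra of dimension exceeding $\rk H' = \dim \fs$, a contradiction. Thus $\Phi' \subseteq \{\a : \a(v) = 0\}$ and again $H' \subseteq \Hl$. The delicate points are keeping the rank bookkeeping honest and checking that the two ``largest'' subgroups genuinely coincide, since the statement trades a centralizer condition for a normalizer (regularity) one; both, however, pin down the same group $\Hl$.

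For part~\ref{thm:WHS} I would use that the Weyl group of a compact connected reductive group equals that of its semisimple part, so $W_\Hl$ is generated by the restrictions $s_\a|_\fs$ for $\a(v) = 0$. Because every such $s_\a$ fixes $v$ and $W_G$ acts orthogonally, $W_v$ preserves $\fs = (\R v)^\perp$, and the restriction $W_v \to \Aut \fs$ is injective (an element fixing $v$ and acting trivially on $\fs$ is the identity on $\ft = \R v \oplus \fs$). Hence $W_\Hl = W_v|_\fs \iso W_v = W_\Kl$, the isomorphism being induced by $\Hl \inc \Kl$ once one checks $N_\Hl(S) \subseteq N_\Kl(T)$, which holds because elements of $\Hl \subseteq \Kl$ centralize $\St$ and so normalize $T = S\.\St$.

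Finally, part~\ref{thm:N} should be essentially immediate: Corollary~\ref{thm:w0} already computes $N$ as $\ang{W_v|_\fs, w_0|_\fs}$ when $w_0\.v = -v$ and as $W_v|_\fs$ otherwise, so substituting the identity $W_\Hl = W_v|_\fs$ from part~\ref{thm:WHS} yields the stated formula. The concluding equivalence then follows by inspection: if $w_0\.v \neq -v$ then $N = W_\Hl$, whereas if $w_0\.v = -v$ (so that $w_0$ preserves $\fs$ and $w_0|_\fs$ is defined) then $N = \ang{W_\Hl, w_0|_\fs}$ strictly exceeds $W_\Hl$ exactly when $w_0|_\fs \notin W_\Hl$. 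I expect no real difficulty here beyond bookkeeping; the substantive work is concentrated in the maximality argument of part~\ref{thm:HS-2}.
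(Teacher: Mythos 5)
Your proposal is correct and follows essentially the same route as the paper's own proof: the root-space description of $\Kl = Z_G(\St)$ with root system $\{\a : \a(v)=0\}$, the observation that the corresponding coroots lie in $\fs$ (so that $S$ is a maximal torus of $\Hl$), the decomposition $H' = [H',H']\.Z(H')^0$ for the maximality claim, and the reduction of part~(4) to \Cref{thm:w0} via parts~(1) and~(3). Your minor reorganizations---deriving $W_{\Hl} \iso W_{\Kl}$ from the general structure of reductive groups rather than the explicit normalizer chain $N_{\Hl}(S)T = N_{\Kl}(T)$, and handling the regularity characterization by a direct containment/rank argument rather than proving the equivalence of the centralizing and normalizing conditions for an arbitrary $H'$---do not change the substance, and the one point you flag as delicate (that $\Hl$ is itself regular with respect to $T$) follows immediately from the fact that $\St$ centralizes $\Kl \geq \Hl$ and $S \leq \Hl$, so $T = S\St$ normalizes $\Hl$.
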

\begin{proof}
	Writing $\defm\Phi$ for the set of nonzero roots of $G$,
	the adjoint action of $\ft$ on $\fg_\C = \fg\, \ox_\R \C $ has the root space 
	decomposition $\fg_\C = \ft_\C \+ \Direct_{\genwt \in \Phi} \C e_\b$ with coroot vectors 
	$\defm{h_\b} = [e_\b,e_{-\b}] \in i\ft$
	implicitly defined by 
	$\b = 2\frac{B(h_\b,-)}{B(h_\b,h_\b)}$.
	The associated real root space 
	decomposition is thus
\[
\fg = \ft \+ \Direct_{\genwt \in \Phi^+} 
\R\big\{e_\b+e_{-\b}, i(e_\b-e_{-\b})\big\}
\mathrlap.
\]

\bigskip	
\noindent 1.	
	Since $T$ is abelian and contains ${S^\perp}$,
	it centralizes $S^\perp$ and so lies in $\Kl$. 
	If we write $\defm{\Phi_\Kl} \subn \Phi$
	for the set of nonzero roots of $\Kl$, 
	then the complexified Lie algebra ${\f l}_\C$ of $\Kl$,
	as a $\ft$-representation, 
	decomposes as $\ft_\C \+ \Direct_{\genwt \in \Phi_\Kl} \C e_\genwt$.
	To determine $\Phi_\Kl$, 
		note that since $\f l$ centralizes $v$, 
	for each $\genwt \in \Phi_\Kl$ we have $\genwt(v) = 0$.
	The positive roots $\Phi^+$ are 
	$\N$-linear combinations of the simple roots $\d \in \D$,
	and $\d(v) \geq 0$ for each simple root because $v \in \ol C$,
	so $\genwt \in \Phi^+$ lies in $\Phi_\Kl$
	if and only if it is an $\N$-linear combination of 
	roots $\d \in \D_v$ vanishing on $v$.
	Thus we have $\Phi_\Kl = \Phi \inter \Z \D_v$.
	But then $\WK$ is the subgroup of $W_G$ 
	generated by reflections through the hyperplanes
	$\ker \g$ for $\g \in \D_v$,
	namely $W_v$.

\bigskip

\noindent 2.
Since $\Kl$ and $T$ centralize $S^\perp$, 
so evidently does $H_S = [\Kl,\Kl]\. S \leq \Kl$.
On the Lie algebra level, we have 
$
\fkl 
	= 
\ft \+ \Direct_{\genwt \in \Phi^+_\Kl} 
		\R\big\{ e_\b+e_{-\b}, i(e_\b-e_{-\b})\big\}
$,
so since $\big[i(e_\b-e_{-\b}),e_\b + e_{-\b}\big] = 2 ih_\b$
and $[h_\b,e_\b] = \b(h_\b)e_\b = 2e_\b$,
the derived subalgebra of $\fkl$ decomposes as
\[
[\fkl,\fkl]
 	=
	\ft_{[\fkl,\fkl]} 
	\+ 
\Direct_{\genwt \in \Phi^+_\Kl} \R\big\{e_\b+e_{-\b}, i(e_\b-e_{-\b})\big\}
\mathrlap,
\] 
where the Cartan algebra 
$\defm{\ft_{[\fkl,\fkl]}}$
is spanned by $i[e_{\b},e_{-\b}] = i h_\b \in \ft$ for $\b \in \Phi^+_\Kl$.

For $\b \in \Phi_\Kl$, since $\b(v)=0$, 
we have $h_\b\in v^\perp=\fs$, so that $\ft_{[\fkl,\fkl]} \leq \fs$. 
Since $[\fs,\f t_{\revision[\fkl,\fkl\revision]}] \leq [\fs,\fs] = 0$
and $[\fs,\R e_\b] \leq [\ft,\R e_\b] = \R e_\b$ for each $\b \in \Phi_\Kl$,
the root space decomposition of $[\fkl,\fkl]$
shows $\big[\fs, [\fkl,\fkl]\big] \leq [\fkl,\fkl]$,
meaning $[\fkl,\fkl] + \fs$ is a Lie subalgebra of $\fkl$ with Cartan algebra $\fs$. 
So $H_S = [\Kl,\Kl]\. S$ is a Lie group with maximal torus $S$
and hence is a proper, closed, connected Lie subgroup of $\Kl$.
	To see $H_S$ is the largest subgroup containing $S$ as
	a maximal torus and centralizing $S^\perp$,
	suppose $\Hlp$ is another such subgroup. 
	By definition, $\Hlp$
	lies in the centralizer $\Kl$ of $S^\perp$,
	so $[\Hlp,\Hlp]$ lies in $[\Kl,\Kl]$.
	As $S$ is a maximal torus of $\Hlp$, 
	it contains the identity component~$\defm{Z(\Hlp)^0}$ of the center~$\defm{Z(\Hlp)}$, 
	giving a containment
	$\Hlp =
	[\Hlp,\Hlp]Z(\Hlp)^0 \leq
	\Hl = [\Kl,\Kl]S$.
	
	To see $H_S$ can be equivalently characterized
	as the largest closed, connected subgroup of $G$ 
	regular with respect to $T$ and with maximal torus $S$, 
	let $H'$ be any  
	closed, connected subgroup of $G$ containing $S$ as a maximal torus. 
	If $H'$ centralizes $S^\perp$, 
	then both $S$ and $S^\perp$ normalize $H'$,
	and hence so does $T=SS^\perp$. 
	Conversely, if $T$ normalizes $H'$, 
	then the adjoint representation of~$\fg$
	restricts to a $\ft$-representation on $\fh'$. 
	The complexification of this representation decomposes as the 
	direct sum of $\fs_\C$ and weight spaces $\C e_\b$.
	Taking $h_\b \in \fs$ 
	as above, 
	$[e_\b,v]$ is a multiple of $B(h_\b,v)e_\b = 0$,
	and since $\fh'_\C$ is spanned by $\fs$ and the $e_\b$,
	it follows that $H'$ centralizes $S^\perp$.

\bigskip

\noindent 3.
	As $\Kl = Z_G(S^\perp)$ and $T = SS^\perp$,
	we have $N_\Kl(S) = N_\Kl(T)$.
	As $T$ is a maximal torus in $\Kl$,
	it in particular contains the identity component 
	$Z(\Kl)^0$ of the center of $\Kl$,
	and hence $\Hl \leq \Kl = [\Kl,\Kl]Z(\Kl)^0 \leq [\Kl,\Kl] T = \Hl T$.
	Since $T$ evidently normalizes $S$,
	we get $N_\Hl(S) \leq N_\Kl(S) = N_\Kl(T) \leq N_\Hl(S)T$;
	thus $N_\Hl(S)T = N_\Kl(T)T = N_\Kl(T)$.
	It follows
	\[
			W_{\Hl} 
		= 	\frac{N_{\Hl}(S)}	S 
		\lt \frac{N_{\Hl}(S)T}	T 
		= 	\frac{N_{\Kl}(T)}		T 
		= 	W_{\Kl}
	\] 
is surjective,
and since $T$ acts trivially by conjugation on $S$,
the injection $W_{\Hl} \longmono \Aut\, \fs$ factors through $W_{\Kl}$,
showing $W_{\Hl} \lt W_{\Kl}$ is injective.

\bs

\nd 4.
	The displayed equations follow from \Cref{thm:w0} and
	Items~\ref{thm:HS-1} and \ref{thm:WHS}.
	The new inequality comes from the fact  
	$w_0|_{\fs}$ may lie in $W_{H_S}$
	even if $w_0$ does not lie in $\WK$.
\end{proof}

Recall that in the statement of \Cref{thm:spheres},
we needed to understand the cohomology of $G/K$. 
We set yet more notation to facilitate 
a cohomological characterization of formality 
for homogeneous spaces.
\begin{notation}\labell{def:HG}
	Given a map $f\: B \lt A$ of graded $\Q$-algebras, we write
	\[
		\smash{A \defm\quot B
			\ceq 
		A\, \ox_B\, \Q} 
			\,=\, 
		\quotientmed{A\,}{\,f(B^{\geq 1})\. A} 
	\mathrlap.
	\]
	Given a group $W \leq \Aut A$ of algebra automorphisms,
	we let $\defm{A^W} \leq A$ denote the subalgebra of $W$-fixed elements.
	We write $\defm{\H_\G} \ceq \H(B\G;\Q)$ for $\G$ a topological group.
\end{notation}

\bthm[{\cite[p.~211]{onishchik}}]\labell{thm:formal}
If $(G,H)$ is a pair of compact, connected groups, then $G/H$
is formal if and only if
\[
\H(G/H) \,\iso\, \qquotientmed{\H_{H}\,}\HG \,\ox\, \Lambda \wP,
\]
where $\Lambda \wP \iso \im\!\big(\mn\H(G/H) \to \H G\big)$
is an exterior algebra on $\rk G - \rk H$ generators of odd degree 
and $\H_{H} \quot \HG$ is a complete intersection ring.
\ethm

\nd Thus, by \Cref{thm:CF}, to assess isotropy-formality of $(G,H)$, 
it will help to understand 
$\H_{H} \quot \HG$.

\begin{proposition}\labell{thm:Heven}
	For a maximal regular pair $(G,H)$ 
	with maximal tori $(T,S)$,
	we have isomorphisms
\[
		\Heven(G/H) 
		\,\iso\, 
		\H_{H} \quot \HG
		\,\iso\,
		(\HS)^{W_{H}} \quot \im(\HG \to \HS).
\]
\end{proposition}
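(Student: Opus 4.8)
I would first dispose of the second isomorphism, which is pure bookkeeping. Since $H$ is connected with maximal torus $S$, Borel's theorem identifies $\H_{H}$ with $(\HS)^{W_{H}}$ via the injective restriction $\H_{H} \to \HS$, and likewise $\HG \iso (\HT)^{W_G}$, a polynomial algebra on $r \ceq \rk G$ even-degree generators $f_1, \dots, f_r$. Under these identifications the characteristic map $\rho\: \HG \to \H_{H}$ factors as $\HG \hookrightarrow \HT \to \HS$, so its image is exactly $\im(\HG \to \HS)$, which lies in $(\HS)^{W_{H}}$. Consequently both $\H_{H} \quot \HG$ and $(\HS)^{W_{H}} \quot \im(\HG \to \HS)$ are literally the quotient of $(\HS)^{W_{H}}$ by the ideal generated by $\rho(\HG^{\geq 1}) = \im(\HG \to \HS)^{\geq 1}$, giving the second isomorphism.

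For the first isomorphism I would use the Cartan model of the homogeneous space (equivalently the Koszul resolution computing the \EMSS of $G/H \to BH \to BG$, and the model underlying \Cref{thm:formal}): because $\HG$ is polynomial, $\H(G/H) \iso H(\H_{H} \ox \Lambda \wP, d)$ as graded algebras, where $\wP$ carries odd-degree generators $\xi_1, \dots, \xi_r$ and $d \xi_i = \rho(f_i) \in \H_{H}$, extended as a derivation with $d|_{\H_{H}} = 0$. This is precisely the Koszul complex $K(\rho f_1, \dots, \rho f_r; \H_{H})$. Since each $\xi_i$ is odd and $\H_{H}$ is concentrated in even degrees, a class in Koszul degree $k$ has total parity $k$; thus $\Heven(G/H)$ is the sum of the Koszul homologies in even degree and $\Hodd(G/H)$ the sum in odd degree, and Koszul degree $0$ contributes $H_0 = \H_{H}/(\rho f_1, \dots, \rho f_r) = \H_{H}\quot\HG$ to $\Heven(G/H)$.

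The plan is therefore to show that the Koszul homology vanishes in all degrees $\geq 2$, so that the only even contribution is $H_0$; here the corank-one hypothesis is decisive. First I would check that $\H_{H}\quot\HG$ is finite-dimensional: by Noether's theorem $\HT$ is module-finite over $\HG \iso (\HT)^{W_G}$, hence $\HS = \HT/(\alpha)$, with $\alpha$ the weight cutting out $\fs = \ker\alpha$, is module-finite over $B \ceq \im(\HG \to \HS)$, and so is its submodule $(\HS)^{W_{H}}$; therefore $(\HS)^{W_{H}} \ox_B \Q = \H_{H}\quot\HG$ is a finite $\Q$-module. Since $\H_{H} \iso (\HS)^{W_{H}}$ is a polynomial, hence Cohen--Macaulay, ring of Krull dimension $\rk H = r-1$, the ideal $I = (\rho f_1, \dots, \rho f_r)$ with $\H_{H}/I$ Artinian has height, and thus grade, equal to $r - 1$. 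By the depth-sensitivity of the Koszul complex, $H_k\big(K(\rho f_\bullet; \H_{H})\big)$ vanishes for $k > r - \mathrm{grade}(I) = 1$.

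Consequently $H_k = 0$ for all $k \geq 2$, the model reduces to $H_0 \oplus H_1$ with $H_0$ even and $H_1$ odd, and products of odd classes land in $H_2 = 0$; hence $\Heven(G/H)$ is exactly the subalgebra $H_0 = \H_{H}\quot\HG$, as claimed. I expect the main obstacle to be the finite-dimensionality and grade computation: one must verify carefully that $I$ is an ideal of definition so that $\mathrm{grade}(I) = r-1$, since it is precisely the gap between the $r$ relations and the dimension $r-1$ — the corank-one feature — that pins the top Koszul homology in degree $1$ and forces the even cohomology down to $H_0$.
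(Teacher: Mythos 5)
Your argument is correct, but it takes a genuinely different route from the paper's. The paper exploits the maximal-regular structure $H=[L,L]\.S$ with $L=Z_G(S^\perp)$: it realizes $G/H \to G/L$ as a circle bundle over a full-rank quotient (so $\H(G/L)$ is even), reads $\Heven(G/H) \iso \H(G/L)/(e)$ off the Gysin sequence, identifies the Euler class $e$ up to scalar with the image of the weight cutting out $\fs$, and finishes with Weyl-invariant bookkeeping via $W_L \iso W_H$. You instead work entirely in the Cartan model, viewing it as the Koszul complex on $r=\rk G$ elements of the Cohen--Macaulay ring $\H_H$ of Krull dimension $r-1$; the Noether finiteness argument shows $\H_H\quot\HG$ is Artinian, forcing $\mathrm{grade} = r-1$, so depth sensitivity kills the Koszul homology in exterior degrees $\geq 2$ and the exterior-degree parity then isolates $\Heven(G/H)=H_0=\H_H\quot\HG$. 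What your approach buys: it never uses the maximal-regular hypothesis, only that $(G,H)$ is a connected corank-one pair, so it proves a more general statement, and it yields $\Hodd(G/H)\iso H_1$ as a bonus. What the paper's approach buys: the explicit geometric identification of the Euler class with the weight $\weight$ (used in the footnote and implicitly elsewhere), and the Gysin sequence itself, which the paper reuses (as equation \eqref{eq:Gysin}) to deduce $\dim\Heven = \dim\Hodd$ in Step 2 of \Cref{thm:alg}. The one point to state with a little more care is the depth-sensitivity citation: you need $\mathrm{grade}(I,\H_H) = r - \sup\{k : H_k \neq 0\}$ for the ideal $I=(\rho f_1,\dots,\rho f_r)$ together with $\mathrm{grade}(I)=\mathrm{height}(I)=\dim\H_H - \dim(\H_H/I) = r-1$ from Cohen--Macaulayness, but both are standard and your reduction to them is sound.
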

\begin{proof}
\revision{
    First note that there is a finite covering
    $T/S \longepi L/H$ 
    where $L = \Kl$ and $H = [L,L]\.S$ are as in \Cref{def:LH}.
    This map arises because,
    as noted in the proof of \Cref{thm:HS}.\ref{thm:WHS},
    we have $L = [L,L]\.T = H T$,
    and hence
    $T \inc L \epi L/H$ is surjective
    with kernel $T \inter H$.
    This kernel is compact and properly contained in $T$,
    so that its rank equals that of $S$,
    and hence by compactness the index $\defm k$ of $S$ is finite,
    so that 
    $T/S \lt L/H$ is a degree-$k$ map of circles.
    }

Consider the Serre spectral sequence $({E_r},{d_r})$ 
associated to the principal circle bundle
\[
L/H \lt G/H \lt G/L\mathrlap. 
\]
Since this spectral sequence's $E_2$ page 
has only its $0\th$ and $1^{\mr{st}}$ rows nonzero,
it collapses at $E_3$. 
\revision{Moreover since $L_S$ is of full rank in $G$,
the cohomology groups of $G/L_S$ are concentrated in even degree,
so the spectral sequence}
can be equivalently understood as the exact (Gysin) sequence
of graded vector spaces
\begin{equation}\labell{eq:Gysin}
0 \to \Hodd(G/H){[-1]}\lt \H(G/L) \os{\. e}\lt H(G/L) \lt \Heven(G/H) \to 0\mathrlap,
\end{equation}
where the class $\defm e$ is $d_2\big(1 \ox\, [L/H]\big)$.
Particularly, 
the ring $\Heven(G/H)$ is isomorphic to the quotient of $\H(G/L)$
by the ideal $(e)$.

The covering spaces $W_L \to BT \to BN_L(T)$
and $W_H \to BS \to BN_H(S)$
induce isomorphisms
$\H_{L} \isoto (\HT)^{W_{L}}$ 
and $H_{H} \isoto (\HS)^{W_H}$~\cite[Prop.~3G.1]{hatcher}.
    We claim the class $e$ is the Euler class 
	of the bundle $G/H \to G/L$,
    \revision{and also, up to a scalar multiple,}
	the image under $(\HT)^{W_L} \simto \H_{L} \to \H(G/L)$ 
	of the Euler class $\defm{\widetilde e}$
	of the circle bundle $BS \lt BT$.
	This follows from the commutative diagram
	\[
	\xymatrix@C=.5em@R=.75em{
		&	&BS\ar[dr]\ar[dd]|\hole\ar[rr]	&&	E(T/S) \ar[dd]|\hole\ar[dr]
		\\
		&G/H\ar[rr]\ar[dd]	&	
        &BH\ar[dd] \ar[rr]&& E(L/H)\ar[dd]	
		\\
		&	&BT\ar[dr]\ar[rr]|(.425)\hole &&B(T/S)\ar[dr]		\\
		&G/L\ar[rr]	&	&BL \ar[rr]&& B(L/H),
	}
	\]
	of principal bundles.
Here the vertical arrows are the bundle projections
\revision{of $L/H$-bundles in front, and of $T/S$-bundles in back},
the front horizontal maps \revision{in the left square}
can be seen as
	$gH \lmt xgH$ and $gL \lmt xgL$ for a fixed $x \in EL$,
	the maps of classifying spaces are up to homotopy 
	those functorially induced from 
    \revision{inclusions and quotient projections of groups},
	the bottom edge of the back square is also
	the classifying map $BT \lt B(T/S)$ of the 
	principal bundle $T/S \to BS \to BT$,
    and similarly for the edge $BL \lt B(L/H)$ of the front square.
    \revision{The maps $BS \lt BH$ and 
    $E(T/S) \lt E(L/H)$ induce maps
    of principal bundles,
    equivariant with respect to the covering map $T/S \lt L/H$.}

    The map of spectral sequences induced by the front \revision{left} square 
	shows that under $G/L \lt BL$,
    the class $\defm e = d_2\big(1 \otimes {[L/H]}\big)$ \revision{(name reused)}
	indeed does pull back to $e = d_2\big(1 \otimes {[L/H]}\big)$.
\revision{The spectral sequence map induced by the front right square 
	identifies $e$ 
	as the transgression of the class $1 \otimes [L/H]$
    in the $E_2$ page $\H_L \ox \H(L/H)$.}
	The map of spectral sequences induced by the back square 
	identifies $\widetilde e$ 
	as the transgression of the class $1 \otimes [T/S]$
	in the $E_2$ page $\HT \ox \H(T/S)$.
\revision{Finally, the map of spectral sequences induced by the right face
    sends $1 \otimes [L/H]$ to $k \otimes [T/S]$,
    and hence the transgression of the former is sent to
    $k$ times the transgression of the latter.
    Since the bundle square commutes,
    this means $\H_L \lt \H_T$ sends $e$ to $k\widetilde e$,
    so the identification $\H_L \isoto (\HT)^{W_L}$
    shows $\widetilde e$ lies in $(\HT)^{W_L}$.
    }
	
    The ideal $(\revision{\widetilde e})$ is the kernel of the surjection $\HT \lt \HS$,
	which can be seen either directly by observing 
	it generates the kernel of
	$H_T^2 \lt H_S^2$
	,
	or else from the \SSS of $T/S \to BS \to BT$.\footnote{\
		If we make the identifications $H^2(BT;\R) \iso \ft\dual$ 
		and $H^2(BS;\R) \iso \fs\dual$, 
		then $\widetilde e$ can be identified with $\weight$, 
		which is another way of showing it is $W_L$-invariant.
		}
As the inclusion $H \longinc L$
induces a restriction isomorphism $W_{L} \isoto W_H$ 
by \Cref{thm:HS}.\ref{thm:WHS},
the identifications yield $(\HT)^{W_L}/(\widetilde e) \iso (\HS)^{W_H}$,
for any $x + (\te) \in \HS = \HT/(\te)$ 
invariant under the action of $W_L = W_H$ 
has $x \equiv w\.x \pmod \te$ for all $w \in W_L$,
and hence $\frac 1{|W_L|} \sum w\.x \in (\HT)^{W_L}$ represents $x + (\te)$.
As~$L$ is of full rank in $G$,
the \SSS of $G/L \to BL \to BG$ collapses 
(the~$E_2$ page is concentrated in even degree),
yielding the standard isomorphisms
$\H(G/L) \iso \H_L \quot \HG = (\HT)^{W_L} \quot (\HT)^{W_G}$.
Thus, as hoped,
\eqn{
	\Heven(G/H) 
	\ \iso\  
	\frac{
		(\HT)^{W_L} 
		}
		{\big(\te, (H_T^{\geq 1})^{W_G}\big)}
	&\ \iso\  
	\qquotientmed{(\HT)^{W_L}/(\te)\,}{\,\im\!
	\big(\mnn(\HT)^{W_G} \inc \HT \epi \HS\big)}
	\\
	&\ \iso \ 
	\qquotientmed{(\HS)^{W_H}\,}{\,\im(\HG \to \HS)}
	\mathrlap.
\qedhere
}
\end{proof}

Now we can prove the target result of this subsection.


\begin{proof}[Proof of \Cref{thm:spheres}]
The final statement follows immediately from \Cref{thm:Wv}.\ref{thm:N}.
If $G/K$ is a rational cohomology sphere of odd dimension,
then $\dim_\Q \H(G/K) = 2$;
in that case, we see from \Cref{thm:dim-criterion}
that $|N|/|W_K| = 1$ and $(G,K)$ is \isotf.
On the other hand,
if $G/K$ has the rational cohomology of a product of spheres, 
one even- and one odd-dimensional,
then evidently $\dim_\Q \H(G/K) = 4$.
Thus in this case, by \Cref{thm:dim-criterion},
the pair $(G,K)$ is \isotf if and only if $[N:W_K] = 2$,
and in particular $N$ is not isomorphic to $W_K$.

For the converse implication \revision{of clause \ref{item:converse}},
write \revision{$H = \Hl$} and
assume $(G,H)$ is isotropy-formal;
we must show $G/H$ has the cohomology
of an odd-dimensional sphere or the product 
of an odd- and an even-dimensional sphere.
By \Cref{thm:CF},
it is formal,  
so by \Cref{thm:formal},
its cohomology is of the form
$(\H_{H} \quot \HG) \ox \ext\wP$
for $\wP$ a one-dimensional vector space graded in odd degree
(the cohomology of an odd-dimensional sphere)
and $\H_{H} \quot \HG$ a complete intersection ring.
\revision{
Thus $\dim_\Q \H(G/H) = 2\. \dim_\Q (\H_H \quot \HG)$.
By \Cref{thm:dim-criterion}, 
we also know $\dim_\Q \H(G/H) = 2[N:W_H]$,
so $\dim_\Q (\H_H \quot \HG) = [N:W_H]$,
which by \Cref{thm:HS}.\ref{thm:N} is either $1$ or $2$.
Thus $\H_H \quot \HG$ has the cohomology either of a point or
of an even-dimensional sphere.
}
\end{proof}

\begin{remark}
   \revision{ We have the anonymous referee to thank for pointing us 
    to the current proof of the converse direction,
    which at one point we had found independently but failed to implement.
    An older version runs as follows.}
Note $\H_{H} \quot \HG$ is $\Heven(G/H)$,
which by \Cref{thm:Heven} is isomorphic to
$(\HS)^{\WH} \quot \im(\HG \to \HS)$.
From the assumption $(G,H)$ is \isotf and \Cref{thm:CF} again,
the image of $\HG \lt \HS$ is $(\HS)^N$ and $N$ acts on $\fs$
as a reflection group,
so the Chevalley--Shepherd--Todd theorem~\cite[Thm.~7.1.4]{neuselsmith}
implies $(\HS)^N$ is a polynomial ring as well.
By \Cref{thm:formal},
then, $(\HS)^\WH\quot (\HS)^N ={\H_{H} \quot \HG}$
is a complete intersection ring, meaning 
$(\HS)^\WH$ too is free over $(\HS)^N$.
Since by a result of Chevalley~\cite[Thm.~7.2.1]{neuselsmith}
$\HS$ is of rank $|N|$ over $(\HS)^N$ 
		and of rank $|W_H|$ over $(\HS)^{W_H}$,
it follows that 
\[
\dim_\Q \Heven(G/H) = \rk_{(\HS)^N} (\HS)^{W_H} = |N|/|W_H|
\mathrlap.\]
But by \Cref{thm:Wv}.\ref{thm:N}, this is $1$ or $2$.
\end{remark}

This theorem distinguishes an especially interesting class
of transitive actions. 
\begin{definition}
	We call a Lie group pair $(G,H)$
	a \defd{\rspp}
	if $G/H$ has the rational cohomology of
	a product
	$S^{\mr{even}} \x S^{\mr{odd}}$ 
	and a \defd{\rsp}
	if $G/H$ has the rational cohomology
	of $S^{\mr{odd}}$. 
	In both cases, we call \GH a \defd{\rsppp}.
\end{definition}
\begin{remark}\labell{rmk:rational-homotopy-1}
	\revision{%
	It is well known that 
	when $G/H$ is simply-connected, 
	any homogeneous space 
	having the rational cohomology ring of a product of spheres 
	(and more generally, any for which the pure Sullivan model
	$\H(BH) \ox \H(G)$ is formal)
	is rationally homotopy equivalent to one.
	}
	Because the vocabulary differs
	when $G/H$ is not simply-connected, we have opted not to speak in terms
	of rational homotopy type in the general case 
	(but see \Cref{rmk:rational-homotopy-2}).
\end{remark}

In our new terminology, \Cref{thm:spheres}
says that to understand \isotfity in the corank-one case,
we need only examine {\rsppp}s which are also {\mrp}s.
The two notions are closely related.

\blem\labell{thm:dichotomy}
Let \GH be a \rspp.
If there exists a closed, connected, regular subgroup $H^- \leq H$
sharing the maximal torus $S$ with $H$,
and $H/H^-$ is an even-dimensional rational homotopy sphere,
then either $(G,H_S)$ is a \rsppp or $H_S = H^-$.
\elem
\bpf
Consider the bundle $H/H^- \to G/H^- \to G/H$,
which admits a bundle map to $H/H^- \to BH^- \to BH$.
As the \SSS of the latter bundle is concentrated
in even degree, it collapses at $E_2$,
so the map of spectral sequences implies 
that of the former bundle does as well;
we conclude $\dim_{\Q}\H(G/H^-) = \dim_\Q(H/H^-)\.\dim_\Q(G/H) = 8$.
Considering instead the bundle 
$H_S/H^- \to G/H^- \to G/H_S$,
we see that if $H_S > H^-$, 
then $\dim_\Q \H(G/H_S)$ is $2$ or $4$.
\revision{%
Since $\dim G/T$ and $\dim H_S/S$ are even
and $\dim T/S = 1$, it follows $\dim G/H_S$ is odd,
and hence by Poincar\'{e} duality the fundamental class 
$[G/H_S]$ is the product of an odd-dimensional class
and its even-dimensional Poincar\'{e} dual.
It follows $(G,H_S)$ is a \rsppp.
}
\epf

\blem\labell{thm:isotropyOddSphere}
Any \rsp \GH is \isotf.
The associated \mrp $(G,H_S)$ is a \rsppp,
and $H = H_S$ if and only if $H$ is regular.
\elem
\bpf
Isotropy-formality of an \rsp is well-known and already shown
in the proof of \Cref{thm:spheres}.
In that case \Cref{thm:spheres} implies $(G,H_S)$ is a \rsppp.
If $H$ is irregular, then by definition $H \neq H_S$.
On the other hand, if $H$ is regular, it is contained in $H_S$,
and we have a fiber bundle $H_S/H \to G/H \to G/H_S$
which by the argument of \Cref{thm:dichotomy} gives
\[2 = \dim_\Q \H(G/H) = \dim_\Q \H(H_S/H) \. \dim_\Q \H(G/H_S)\mathrlap. \]
As 
$\dim_\Q \H(G/H_S) \geq 2$,
we must have $\dim_\Q \H(H_S/H) = 1$.
But then as $H_S$ and $H$ are of equal rank,
we have $\chi(H_S/H) = |W_{H_S}|/|W_H| = 1$, 
implying $H_S = H$.
\epf

We can use this to show that if the 
isotropy group of a corank-one pair
properly contains that of a \rspp, it is \isotf.

\blem\labell{thm:extend-H}
Suppose \GH is a \rspp.
If there exists a closed, connected subgroup $K > H$ of $G$
sharing the same maximal torus $S$,
then $K/H$ is an even-dimensional sphere,
\GK is a \rsp, 
and \GH is isotropy-formal.
\elem
\bpf
Consider the bundle $K/H \to G/H \to G/K$;
as in the proof of \Cref{thm:isotropyOddSphere},
we have $4 = \dim_\Q \H(G/H) =  \dim_\Q \H(K/H)\.\dim_\Q \H(G/K)$.
Since $K > H$ is of equal rank, we have $\dim_\Q \H(K/H) \geq 2$,
and since $\rk G - \rk K = 1$,
we have $\dim_\Q \H(G/K) \geq 2$,
so we must actually have equality in both cases.
Thus $K/H$ has the rational cohomology an even-dimensional sphere, and hence by a result of Borel~\cite[Thm.~IV]{borel1949remarks} is an even-dimensional 
sphere, and $(G,K)$ is a \rsp.
Hence by \Cref{thm:isotropyOddSphere},
$(G,K)$ and \GH are \isotf.
\epf 

In fact, we do not need the quotient rational homotopy sphere $G/H$
to be odd-dimensional.

\blem\labell{thm:isotf-sphere}
If $G/H$ is a rational cohomology sphere, then $(G,H)$ is \isotf.
\elem
\begin{proof}
	We have addressed the odd-dimensional case in \Cref{thm:isotropyOddSphere}.
	The even-dimensional case is well-known and follows from 
	\Cref{thm:dim-criterion}
	since $\chi(G/H) = 2$ implies $\rk G = \rk H$,
	so that $T = S$ and $N = W_G$,
	and then $2 = \chi(G/H) = |W_G|/|W_H| = |N|/|W_H|$.
\end{proof}

We will need one more lemma of this type 
for our proof of \Cref{thm:isotfclassification}.

\bcor\labell{thm:-Id}
Let \GH be a regular \rspp such that
the longest word $w_0^G$ of $W_G$ acts as $-{\id}$ on~$\ft$.
Then \GH fails to be \isotf 
if and only if
the longest word $w_0^{\smash{H}}$ of $H$ acts as $-{\id}$ on~$\fs$
and
$H = H_S$ in the notation of \Cref{def:LH}.
\ecor
\bpf
If $H_S > H$, then \GH is \isotf by \Cref{thm:extend-H}.
Thus we assume $H_S = H$.
Since $w_0^G$ acts on $\ft$ as $-{\id}$, in particular $w_0^G \. v=-v$
and by \Cref{thm:w0}, we have 
$N = \ang{W_{v}|_\fs,w_0^G|_{\fs}} = \ang{W_{v}|_\fs,-{\id}_{\fs}}$. 
If $-{\id}_{\fs} \notin W_H$, then we must have $N > W_H$,
so \GH is \isotf by \Cref{thm:spheres}.
On the other hand,
by \Cref{thm:HS}.\ref{thm:WHS},
we have $W_H = W_v|_\fs$,
so if $-{\id_{\fs}} \in W_H$, then $N = W_v|_{\fs} = W_{H_S}$
and $(G,\revision{H_S})$ is by \Cref{thm:spheres} not \isotf,
so \GH is not either.
But by the argument of \Cref{thm:w0},
applied to an interior point $v'$ of $\ol C$,
the only element of $W_H$ that might 
act as $-{\id_{\fs}}$ is $w_0^{H}$.
\epf

\subsection{Covers}\labell{sec:cover}
It will be convenient to be able to 
make replacements of pairs ensuring $\pi_1(G/H)$ is free abelian.
\revision{Recall that the fundamental group of a Lie group is abelian,
so that $\pi_1 G$ is abelian and if $H$ is connected, then so is $\pi_1(G/H)$.
In this section we will frequently reason in terms of the rational 
vector spaces $\defm{\pi_*(-;\Q)} \ceq \pi_*(-) \ox \Q$.}

%


\bdefn\labell{def:cover}
We call a pair $(\tG,\tH)$ of compact, connected Lie groups 
a 
\defd{cover} of 
another such pair $(G,H)$
if there exists a finite covering map $q\: \tG \longepi G$ such that 
$\tH$ is the identity component $q\-(H)^0$ 
of the preimage $q\-(H)$.
We call a cover \defd{equal-sheeted} if $\tH = q\-(H)$. 
\edefn

This definition is rigged to 
induce
a covering $\tG/\tH \lt G/H$, which in the equal-sheeted case is a diffeomorphism.
Coverings preserve and reflect all properties we are interested in.

\bprop%
\labell{thm:cover-isotf}
A compact, connected Lie group pair 
$(\tG,\tH)$ covering a compact, connected pair $(G,H)$
is \isotf if and only if $(G,H)$ is,
is (maximal) regular if and only if $(G,H)$ is,
and 
is a \rsppp (respectively) if and only if \GH is.
The projection $\tG/\tH \lt G/H$ 
induces isomorphisms of all rational homotopy groups.
\eprop
\begin{proof}
	The statement about \isotfity
	appears in earlier work of one of the authors%
	~\cite[Thm.~1.2]{carlson2018eqftorus}.
For the statement about {\rsppp}s,
another lemma in that work~\cite[Prop.~3.12]{carlson2018eqftorus} 
observes that  one has $\H(\tG/\tH) \iso \H(G/H)$.
The statement about regularity follows because it can be expressed
in terms of the common Lie algebras $\fg,\f h,\f t$
of $G,H,T$ and $\tG,\tH,q\-(T)^0$,
and that about maximality because closed, connected subgroups are
in bijection with Lie subalgebras.
The isomorphism of rational homotopy groups follows
from the long exact homotopy sequences of 
the bundles $\tH \to H$,\,\, $\tG \to G$,\,\, $\tG \to \tG/\tH$, and $G \to G/H$.
\end{proof}

We will therefore only consider pairs up to covers.
\revision{For this, it will be useful to 
recall some fundamental results in the structure theory 
of compact Lie groups we will use throughout the rest of the work,
so we now make a brief digression to rehearse 
a sufficiency of this theory.
}

\begin{definition}\labell{def:virtual-product}
	We say a compact Lie group is \defd{simple} if it contains 
	no nontrivial \emph{connected}, 
	proper, normal subgroups.
	If $H$ embeds as a normal subgroup of each of two groups $K_1$ and $K_2$,
	we write $\defm{K_1 \ox_H K_2}$ for the \emph{balanced product},
	the quotient 
	of $K_1 \x K_2$ by the equivalence relation setting
	$(k_1 h,k_2) \sim (k_1,hk_2)$ for $(k_1,k_2) \in K_1 \x K_2$ and $h \in H$.
	A \defd{virtual direct product} of two Lie groups $K_1$ and $K_2$,
	written $G = \defm{K_1 \. K_2}$, is a Lie group $G$ 
	of the form $K_1 \ox_F K_2$ 
	for some finite subgroup $F$ central in $K_1$ and $K_2$.\footnote{\ 
		In this case the traditional terminology is that
		$G$ is \emph{locally isomorphic to} $K_1 \x K_2$,
		a sensible wording since this means the Lie algebras are isomorphic.
	}
	More generally a \defd{virtual direct product} 
	of two topological spaces $X_1$ and $X_2$
	is any space finitely and centrally 
	covered by the direct product $X_1 \x X_2$.
	The definition generalizes to virtual direct products 
	of any finite number of factors.
\end{definition}

\begin{theorem}\labell{thm:Lie-structure}
	Every compact, connected Lie group ${G}$ is 
	a virtual direct product of the central torus~$Z(G)^0$ 
	and the commutator subgroup $[G,G]$,
	which is in turn the virtual direct product of
	finitely many simple groups.
	If ${H} \ideal {G}$ is a connected normal subgroup, then $G$ is the virtual
	direct product of~$H$ and a \defd{complement}~$P$, 
	meaning there is a compact, connected 
	subgroup $P \ideal {G}$
	such that $\dsp G = HP \iso \smash{H \ox_{H \inter P} P}$.
	Particularly, $P$ centralizes $H$ and $H \inter P$ is a finite central subgroup
	of both $H$ and $P$.
\end{theorem}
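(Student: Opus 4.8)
The plan is to descend to the Lie algebra, split there, and then lift the splitting to \emph{closed} subgroups using compactness. First I would fix an $\Ad(G)$-invariant inner product on $\fg$, obtained by averaging an arbitrary inner product against Haar measure; with respect to it every $\ad X$ is skew, so for any ideal $\f a \ideal \fg$ the orthogonal complement $\f a^\perp$ is again an ideal, $\fg = \f a \oplus \f a^\perp$, and $[\f a,\f a^\perp] \subseteq \f a \inter \f a^\perp = 0$. Applying this to $\f a = [\fg,\fg]$ exhibits $\fg$ as $[\fg,\fg]\oplus \f z$ with $\f z$ abelian and equal to the center $\f z(\fg)$, and iterating the splitting on the semisimple ideal $[\fg,\fg]$ writes it as an orthogonal sum of simple ideals $\fg_1 \oplus \cdots \oplus \fg_k$, unique because distinct simple ideals commute. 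This settles the decomposition at the infinitesimal level; it also shows every ideal $\f h \ideal \fg$ has the form $(\f h \inter \f z(\fg)) \oplus \Direct_{i \in I}\fg_i$ for some $I \subseteq \{1,\dots,k\}$, since $\fg_i$ simple forces $\f h \inter \fg_i$ to be $0$ or all of $\fg_i$.

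Next I would promote this to groups, the key point being closedness. The central torus $Z(G)^0$ is closed with Lie algebra $\f z(\fg)$. For the semisimple part I would invoke Weyl's theorem that the universal cover of a compact semisimple group is compact: letting $\tG_i$ be the compact simply-connected simple group with Lie algebra $\fg_i$, the Lie-algebra inclusion $[\fg,\fg] \hookrightarrow \fg$ integrates to a homomorphism $\prod_i \tG_i \to G$, whose image $G_{\mathrm{ss}}$ is a compact (hence closed) subgroup with Lie algebra $[\fg,\fg]$ and whose restriction to each $\tG_i$ has closed image $G_i$. Since $G = Z(G)^0\, G_{\mathrm{ss}}$ and $G_{\mathrm{ss}}$ is semisimple, so equal to its own commutator, we get $G_{\mathrm{ss}} = [G,G]$. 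This proves the first two sentences, with $G = Z(G)^0 \. [G,G]$ and $[G,G] = G_1 \cdots G_k$.

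For the complement statement, given closed connected normal $H$ with $\f h = (\f h \inter \f z(\fg)) \oplus \Direct_{i \in I}\fg_i$, I would build $P$ factor by factor rather than as $\f h^\perp$, precisely to keep it closed. On the semisimple side I take $\prod_{i \notin I} G_i$; on the central side I choose a complementary \emph{subtorus} $Z'$ to $(H \inter Z(G)^0)^0$ in $Z(G)^0$, which exists because a subtorus corresponds to a direct summand of the cocharacter lattice. Setting $P = Z' \. \prod_{i \notin I}G_i$ gives a compact, hence closed, connected subgroup whose Lie algebra $\f p$ is an ideal complementary to $\f h$ with $[\f h,\f p] = 0$. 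Then $P$ is normal because $\Ad(G)$ preserves $\f p$ and $G$ is connected; $P$ centralizes $H$ because $\exp X \exp Y = \exp Y \exp X$ for $X \in \f h$, $Y \in \f p$ (as $\ad X$ kills $\f p$) and such exponentials generate $H$ and $P$; $G = HP$ because $\f h + \f p = \fg$ and $G$ is connected; and $H \inter P$ is finite since $\f h \inter \f p = 0$ makes it a discrete closed subgroup of a compact group, central in $HP = G$ as it lies in each factor's centralizer of the other. Hence $G = H \ox_{H \inter P} P$, the asserted virtual direct product.

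The main obstacle is closedness, and the subtle spot is the central torus: the naive complement $\f h^\perp$ need not integrate to a closed subgroup (an irrational line in a $2$-torus is the cautionary example), so the argument must replace it with a rational complementary subtorus, which is exactly where the direct-summand property of the cocharacter lattice is essential. The compact semisimple factors cause no trouble once Weyl's finiteness theorem is in hand, so the whole weight of the proof rests on these two structural inputs.
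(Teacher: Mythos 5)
The paper does not actually prove \Cref{thm:Lie-structure}: it is stated in \Cref{sec:cover} as a rehearsal of standard structure theory for compact connected Lie groups, with no argument supplied. So there is nothing in the paper to compare your write-up against line by line; what can be said is that your proof is correct, complete, and is the standard argument one would find in a textbook treatment.

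Your two structural inputs are exactly the right ones. The averaged invariant inner product makes every orthogonal complement of an ideal an ideal and forces $[\f a,\f a^\perp]=0$, which yields the Lie-algebra decomposition $\fg = \f z(\fg) \oplus \fg_1 \oplus \cdots \oplus \fg_k$ and the description of an arbitrary ideal as $(\f h \inter \f z(\fg)) \oplus \Direct_{i \in I}\fg_i$ (the one compressed step here is that a nonzero projection of $\f h$ to $\fg_i$ forces $\fg_i \leq \f h$, via $[\fg_i,\f h] \leq \f h \inter \fg_i$ and simplicity of $\fg_i$; worth a sentence, but standard). Weyl's finiteness theorem then produces the closed semisimple factors, and you correctly identify the only genuinely delicate point: the complement to $\f h \inter \f z(\fg)$ inside the central torus must be chosen as a rational complementary subtorus via the cocharacter lattice, not as the metric orthogonal complement, which need not integrate to a closed subgroup. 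The verifications that $P$ is normal, centralizes $H$, satisfies $G = HP$, and meets $H$ in a finite central subgroup all go through as you state them. No gaps.
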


In the case $\pi_1$ is infinite, 
\revision{we have the following.}

\begin{theorem}\labell{thm:isotf-circle-factor}
	Let a \ccpair \GH of Lie groups be such that $\pi_1(G/H)$ is infinite
	and $\H(G/H) \iso \H(S^1 \x S^\ell)$ for some $\ell \geq 2$.
	Then \GH finitely covers
	a pair of the form $(G',H') \x (S^1,\trivialgroup)$
	where $G'/H'$ is a rational cohomology $S^\ell$,
	and hence is \isotf.
\end{theorem}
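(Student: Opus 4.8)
The plan is to peel off a circle from the central torus of $G$ after passing to a finite cover. Since $\H(G/H)\iso\H(S^1\x S^\ell)$ with $\ell\geq 2$, we have $H^1(G/H;\Q)\iso\Q$, so $\pi_1(G/H)$ has rank one and, being infinite, carries a free $\Z$-summand. To find the responsible circle I would work in the central torus $Z(G)^0$. Because $H$ is connected, the homotopy sequence of $H\to G\to G/H$ gives an exact sequence $\pi_1 H\to\pi_1 G\to\pi_1(G/H)\to 0$, and tensoring with $\Q$ (using $\pi_1 G\ox\Q\iso\f z(\f g)$, as $[G,G]$ is semisimple) exhibits $\pi_1(G/H)\ox\Q$ as the cokernel of the map $\pi_1 H\ox\Q\to\f z(\f g)$. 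Writing $V$ for its image, $V$ is thus a corank-one subspace of $\f z(\f g)$; since it is the rational span of the image of a homomorphism of lattices, $V$ is defined over $\pi_1 Z(G)^0$ and hence is the Lie algebra of a subtorus $Z'\leq Z(G)^0$.

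Next I would split $G$. Choosing a rational line $\f c\leq\f z(\f g)$ complementary to $V$, it integrates to a circle $C\leq Z(G)^0$ with $Z(G)^0=Z'\. C$, and setting $G'\ceq Z'\.[G,G]$, \Cref{thm:Lie-structure} gives $G=Z(G)^0\.[G,G]=G'\. C$ with $F\ceq G'\inter C$ finite and central. As $[G,G]$ is semisimple, $Z(G')^0=Z'$, so $\pi_1 G'\ox\Q=V$. The crucial claim is that $H\leq G'$. Indeed, the composite $H\inc G\epi G/G'$ has image a subtorus of the circle $G/G'\iso C/F$, while on rational $\pi_1$ it is the map $\pi_1 H\ox\Q\to\f z(\f g)\to\f z(\f g)/V$, which vanishes precisely because the image of $\pi_1 H\ox\Q$ is $V$; hence the image subtorus is trivial and $H\leq G'$. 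This transversality of the chosen circle to $H$ is the step on which the argument turns, and it is exactly why $V$ was arranged to be a genuine subtorus and not merely a subspace.

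With $G=G'\. C$, $F=G'\inter C$ finite central, and $H\leq G'$, I would exhibit the covering explicitly: the assignment $g'c\longmapsto(g'F,\,cF)$ is a well-defined surjective homomorphism $q\: G\lt(G'/F)\x(C/F)$ with kernel $F$, hence a finite covering onto a product which I write as $G''\x S^1$ with $G''\ceq G'/F$ and $C/F\iso S^1$. Because $H\leq G'$ one has $q(H)=H''\x\{1\}$ for $H''\ceq H/(H\inter F)$, and a dimension count identifies $H$ with $q\-(H''\x 1)^0$, so $(G,H)$ is a cover of $(G'',H'')\x(S^1,\trivialgroup)$ in the sense of \Cref{def:cover}. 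By \Cref{thm:cover-isotf} the induced covering of quotients gives $\H(G/H)\iso\H(G''/H'')\ox\H(S^1)$; comparing with $\H(S^1\x S^\ell)$ and cancelling the $\H(S^1)$ factor degree by degree shows $G''/H''$ is a rational cohomology $S^\ell$.

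Finally I would assemble the conclusion. The pair $(G'',H'')$ is \isotf by \Cref{thm:isotf-sphere}, and adjoining a trivially-acting circle preserves \eqfity: for $(G''\x S^1,H''\x\trivialgroup)$ both the total Betti number of the quotient $(G''/H'')\x S^1$ and that of its fixed set under a maximal torus of $H''$ are twice the corresponding numbers for $G''/H''$, so the Borel Betti-number equality underlying \Cref{thm:dim-criterion} is inherited. Since $(G,H)$ covers this pair, \Cref{thm:cover-isotf} yields that $(G,H)$ is \isotf. The main obstacle throughout is the rationality/transversality argument forcing $H\leq G'$; once that is secured, the remainder is bookkeeping with \Cref{thm:Lie-structure} and the covering formalism.
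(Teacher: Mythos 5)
Your proof is correct and follows essentially the same route as the paper's: split off a central circle $C$ rationally transverse to $H$, observe $G = G'\.C$ with $H \leq G'$ and $F = G'\inter C$ finite, pass to the quotient product pair, and conclude via the rational-sphere case. The only differences are cosmetic---you locate $G'$ via the image of $\pi_1 H \ox \Q$ in $\f z(\f g)$ where the paper uses the image of $H$ in $G^{\mathrm{ab}}$ (the same subtorus), and you verify equivariant formality of the product pair by the Borel Betti-number criterion where the paper factors the Borel construction.
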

\begin{proof}
\revision{
    Note from the cohomology that we must have $\pi_1(G;\Q) \iso \Q$ one-dimensional,
    and that the maps $Z(G)^0 \longinc G$
    and $G \longepi G\ab \ceq G/[G,G]$ induce
    isomorphisms on $\pi_1(-;\Q)$.
    Hence $\dim_\Q \pi_1(G;\Q)$ is equal to the dimension 
    of the torus $G\ab$, and 
    the (toral) image $\defm{\ol S}$ of $H \to G \to G\ab$ is of dimension one less.
    Select a circle $\defm C \leq Z(G)^0$
    whose image $\defm{\ol C}$ under 
    $Z(G)^0 \inc G \epi G\ab$ is complementary to $\ol S$,
    so that $\ol C \. \ol S = G\ab$ and $\ol C \inter \ol S$ is finite
    and $\ol C \inc G\ab \epi G\ab/\ol S$ is a finite covering.
    The map $C \longepi \ol C$ is also a finite covering since
    $Z(G)^0 \lt G\ab$ is.
    Write $\defm{\wh S}$ for the identity component
    of the preimage of $\ol S$ under $Z(G)^0 \inc G \epi G\ab$
    and set $\defm{G'} \ceq \wh S \. [G,G]$.
    Then the image of $G' \to G \to G\ab$ is $\ol S$,
    so the image of $G'\.C \to G \to G\ab$ is all of $\ol S\.\ol C = G\ab$,
    and hence $G'\. C = G$.
    The intersection $F = G' \inter C$ of the two 
    is also the kernel of the surjection
    $C \inc Z(G)^0 \epi G\ab \epi G\ab/\ol S \simto \ol C / (\ol C \inter \ol S)$ 
    and hence is finite since $\ol C \inter \ol S$
    and the kernel of $Z(G)^0 \lt G\ab$ are.
    Of course, since $H = Z(H)^0 \. [H,H] \leq \wh S \. [G,G] = G'$,
    the intersection $\defm{F_H} \ceq H \inter C \leq F$ is also finite.}

    \revision{
    Since $G$ is isomorphic to $G' \ox_F C$,
    } 
	we see \GH covers $(G'/F \x C/F,\, H/F_H \x \trivialgroup)$.
	The latter has homogeneous quotient $G'/FH \x C/F$.
	Evidently $C/F$ is a circle, 
	and since covers are rational cohomology isomorphisms,
	we see $G'/FH$ is a rational cohomology $S^\ell$.
	The old pair is \isotf if and only if the new pair is
	by \Cref{thm:cover-isotf}.
	For the new pair, the homotopy orbit space
	$\smash{\big(\frac{G'/F}{H/F_H} \x \frac{C/F}{\trivialgroup}\big)_{H/F_H \x \trivialgroup}}$ of the isotropy action
	is $\smash{\big(\frac{G'/F}{H/F_H}\big)_{H/F_H} \x {C/F}}$,
	so the new pair is \isotf if and only if 
	$(G'/F,H/F_H)$ is,
	but this is always the case
	by \Cref{thm:isotf-sphere}.
\end{proof}
\begin{remark}\labell{rmk:rational-homotopy-2}
	This implies the converse to Bletz-Siebert's Corollary 6.3.2~\cite{bletzsiebert2002},
	which states that if \GH is a \ccpair of Lie groups with
	$\pi_*(G/H;\Q) \iso \pi_*(S^1 \x S^\ell;\Q)$,
	then $\H(G/H) \iso \H(S^1 \x S^\ell)$.
	With \Cref{rmk:rational-homotopy-1},
	this shows that for such a pair,
	$G/H$ has the rational cohomology of the product of (one or) two spheres
	if and only if it has the rational homotopy of such a product. 
\end{remark}

If $\pi_1$ is finite, we may assume for our purposes it is zero by applying the following:

\bprop \labell{thm:cover-constr}
Let $(G,H)$ be a pair of compact, connected Lie groups.
Then all connected finite covers of $G/H$ are 
realized as $\tG/\tH$ for some compact, connected pair $(\tG,\tH)$ such that $\tH \iso H$.
\eprop
\bpf
Considering the long exact sequence of the fibration $H \os{\defm i}\to G \to G/H$,
since $H$ is path-connected, 
we see $\pi_1(G/H) \iso \coker\pi_1(i)$.
Since $G$ is a topological group, $\pi_1(G/H)$ is then abelian,
so connected finite covers of $G/H$ correspond bijectively
to finite-index subgroups $\G \leq \pi_1(G)$ containing $\im \pi_1(i)$,
which also correspond to connected finite covers $\tG \lt G$
along which $i$ lifts.
If we write $\tH \iso H$ for a lift, then $(\tG,\tH)$ covers $(G,H)$
and the image of $\pi_1(\tG/\tH) \lt \pi_1(G/H)$ is isomorphic to $\G/\mn\im \pi_1(i)$.
\epf

\nd If $\pi_1(G/H_S)$ is finite, then taking $\G = \im \pi_1(i)$,
we get a \mrp \tGtH such that 
$\pi_1(\tG/\tH) = 0$ 
which is \isotf (respectively a \rsppp) if and only if \GH is.


\subsection{\revision{Reduction to} 
        the effective \revision{quotient}}\labell{sec:effective}
It is in a naive sense impossible to fully write out {\rsppp}s 
$(G,H)$,
because if $G$ and another group $K$ are closed normal subgroups
in some larger group and $G \inter K = \trivialgroup$,
we may take the pair $(GK, HK)$ and obtain the same quotient.
While no one is defending this as a sound use of time,
it does obstruct a full classification.
To get rid of boring components of an action,
note that the non-identity elements of the image of the group homomorphism
$\rho\: G \lt \Homeo(G/H)$
act nontrivially by definition,
and the kernel of $\rho$, 
the subgroup of elements fixing each $gH$,
is a normal subgroup of $G$,
namely $\Inter_{g \in G} gHg\-$, 
the largest normal subgroup of $G$ contained in
(and hence also normal in) $H$.

\bdefn\labell{def:effective}
We call a pair $(G,H)$ of Lie groups 
\defd{(virtually) effective} if the 
action of $G$ on $G/H$ is,
which is to say the kernel of $\rho\: G \lt \Homeo(G/H)$
is trivial (respectively, finite).\footnote{\ 
	\emph{Almost effective} is standard, 
	but we prefer to make the analogy with 
	geometric group theory, 
	where $G$ is said to be \emph{virtually} of some class $C$
	if it admits a finite-index subgroup of class $C$.
}
We call a pair $(\ol G,\ol H)$ equivalent to $\big(\rho(G),\rho(H)\mn\big)$
an \defd{effective \revision{quotient}} of $(G,H)$.
\edefn	

Evidently the effective \revision{quotient} of a \rsppp \GH  
is again a \rsppp.
Slightly less obviously, effective \revision{quotient}
also reflects {\mrp}s and \isotfity.

\bprop\labell{thm:inheritance-regularity}
	Let \GH be a pair of compact, connected Lie groups,
	$T \leq G$ a maximal torus,
	$K \leq H$ a closed subgroup normal in $G$,
	and $S \leq H \inter T$ a maximal torus 
	of $H$ such that 
	the identity component $(S \inter K)^0$ 
	is a maximal torus of $K$.
	Then \GH is a \mrp with respect to $T$ 
	if and only if $(G/K,H/K)$ is a \mrp with respect to $TK/K$.
\eprop
\bpf
One checks easily
that the bijection from (closed) subgroups $H' \geq K$
of $G$ to (closed) subgroups $H'/K \leq G/K$
restricts to a bijection between groups $H' \geq K$
satisfying $T \leq N_G(H')$
and groups $H'/K$ with $TK/K \leq N_{G/K}(H'/K)$,
so maximality is preserved and reflected.

Evidently, 
if $S$ is maximal in $H$,
then $SK/K$ is maximal in $H/K$.
On the other hand, suppose tori
$S \leq S' \leq H$ are given such that
$S \inter K$ contains a maximal torus $T_K$ of $K$ and
$SK/K$ is a maximal torus of $H/K$.
As $SK/K \leq H/K$ is a maximal torus, we have
$S'K/K \leq SK/K$
and hence may write any $s' \in S'$ as $sk$ for some $s \in S$
and $k \in K$.
Then $k = s\-s'$ lies in $S' \inter K$,
so $S' \leq S(S' \inter K)$.
As $S' \inter K$ is a compact abelian
group with identity component $T_K$,
this means $S'$ is contained in the union of finitely many
translates of $ST_K$, which is only possible
if $\dim S' \leq \dim ST_K$.
But $T_K \leq S \inter K \leq S \leq S'$,
so  $S' = S$.
\epf

\bprop[Equivariant formality and effective \revision{quotient}s]%
\labell{thm:inheritance-eff}
Let a compact, connected Lie group $H$ act on a compact space $X$ 
in such a way that the restricted action of a closed normal subgroup $K$ is trivial.
Then the $H$-action on $X$ is equivariantly formal 
if and only if the induced action of $\ol H = H/K$ is.
\eprop

\begin{proof}
    \revision{%
        Let $S$ be a maximal torus of $H$
        and $\ol S$ its image in $\ol H$.
        By Borel's well-known
        characterization~\cite[IV.5.5, XII.3.4]{borel1960seminar} of \eqfity of 
        the action of a torus $T$ on a compact space $X$ 
        (also leading to \Cref{thm:dim-criterion}),
        we have $\dim_\Q \H(X) \geq \dim_\Q \H(X^S)$,
        with equality if and only if the $S$-action on $X$
        (or equivalently the $H$-action)
        is \eqf, and similarly for the $\ol S$- and $\ol H$-actions.
        Of course, since we have assumed $K$ acts trivially,
        the $S$-action on $X$ factors through an $\ol S$-action
        and the fixed point sets $X^S$ and $X^{\ol S}$ 
        are thus equal.}\footnote{\ 
            \revision{At least one direction survives 
            even when $H$ is only be a topological group
            acting continuously on a topological space $X$,
		and in fact holds with arbitrary coefficients.
	    }
            As $EH \x E\ol H$ is contractible and a principal
            $H$-bundle under the diagonal action,
            we can use it in the Borel construction
            \[
            X_H = \frac{EH \x E\ol H \x X}{H}
                    = \frac{(EH \x E\ol H \x X)/K}
                                    {H/K}
                    =  \frac{BK \x E\ol H \x X}
                    {\ol H}
                    \mathrlap.
            \]
            With this identification,
            the map $X_H \lt X_{\ol H}$ induced by $H \longepi \ol H$
            projects out the $BK$-coordinate,
            \revision{%
            and $X \lt X_{\ol H}$ factors through $X \lt X_H$,
            so if the former induces a surjection in cohomology,
            then so does the latter.
            }
        }
%
\end{proof}

Now per \Cref{thm:spheres}, 
it remains only to examine 
effective {\mrp}s,
determine which are {\rsppp}s,
and examine the action of $w_0$.
We will see in \Cref{sec:cases}
there are up to equivalence only finitely many classes of cases 
to be dealt with.

\subsection{Reduction to the irreducible}%
\labell{sec:irred}%
\labell{sec:enlarge}

Even an effective action can sometimes be simplified.

\bdefn\labell{def:irreducible}
We call a pair $(G,H)$ \defd{\revision{(normally)} irreducible}
if the action of $G$ on $G/H$ is \defd{irreducible}
\revision{in the sense that} the restricted action
of each proper normal subgroup of $G$ is intransitive.
Given a pair $(\wh G,\wh H)$
and closed subgroups $G \ideal \widehat{G}$ and $H = G \inter \widehat{H}$ 
such that the induced map $G/H \lt \widehat{G}/\widehat{H}$
is a homeomorphism and $(G,H)$ is irreducible,
we call the latter a \defd{(normally) \isp}
of $(\widehat{G},\widehat{H})$.
We also call the embedding of pairs 
$(G,H) \lt (\widehat{G},\widehat{H})$
an \defd{enlargement} in this case, and
$(\widehat{G},\widehat{H})$ an \defd{enlargement of} $(G,H)$.

\edefn

\brmk\labell{rmk:irreducible}
Every pair contains an \isp,
and an \ip is always virtually effective, 
but not vice versa~\cite[p.~76]{onishchik}.
For example, 
the standard action of $\U(n)$ on $S^{2n-1} \subn \C^n$
is effective but the action of its normal subgroup $\SU(n)$
is also transitive.
\ermk

We mean to find irreducible subpairs of the effective \revision{quotient}.
In \Cref{def:irreducible}, there is no guarantee that if 
$\wh G$, $\wh H$, and $G$ are connected, then $H$ will be,
but if $\pi_1(G/H) = 0$,
as we arranged in \Cref{sec:cover},
then this is true by examination 
of the long exact homotopy sequence.\footnote{\ 
On the other hand, if $\widehat{G}$, $G$, and $H$ are connected,
$\wh H$ is connected too. 
%
}
Isotropy-formality is inherited by irreducible subpairs.

\bprop[Inheritance of equivariant formality]\labell{thm:inheritance}
Let a topological group $\widehat{H}$ act on space $X$ and let $H \leq \widehat{H}$ be a subgroup.
Then if the $\widehat{H}$-action on $X$ is equivariantly formal, 
so is the restricted $H$-action.
\eprop
This is very well known, 
but the proof is too simple to be worth excluding.
\bpf
Taking $EH = E\widehat{H}$, the fiber inclusion $i\: X \lt X_{\widehat{H}}$
of the Borel construction factors as
$X \to X_{H} \to X_{\widehat{H}}$.
Evidently if $i$ induces a surjection in cohomology,
so must $X \lt X_{H}$.
\epf

Irreducibility is also preseved by finite covers.

\bprop[Irreducibility and covers]\label{thm:cover-irred}
\revision{%
A \ccpair $(G,H)$ with $G/H$ positive-dimensional 
is irreducible if
and only if
any one of its finite connected covers $(\widetilde G, \widetilde H)$
is irreducible.}
\eprop
\bpf
\revision{%
If $\wt K \ideal \wt G$ acts transitively on $\wt G/\wt H$,
then the image $K$ of $\wt K \inc \wt G \epi G$ acts transitively on $G/H$
and is again proper since $\wt K$ cannot be contained in $\ker(\wt G \to G)$
lest one have cardinalities $|\wt G/\wt H| = |\wt K \. 1\wt H| \leq |\wt K|$ finite.
On the other hand $K \ideal G$ acts transitively on $G/H$
if and only if $K\. 1H = G/H$, in which case the identity component
$\wt K \normal \wt G$ of the preimage of $K$ 
is such that $\wt K\. 1\wt H \sub \wt G/\wt H$ is connected and covers $G/H$,
so that it is all of $\wt G/\wt H$.
}
\epf



%
%


Evidently we can create ineffective enlargements 
$(G,H) \longinc (G \x P, H \x P)$
of compact, connected Lie group pairs
with wild abandon,
and if $G$ is normal in $\widehat{G}$,
then $\widehat{G}$ must virtually be such a product,
but effectiveness means the action of the new factor must commute
with the given $G$-action, severely restricting our options.
\revision{The interaction between normalizers and centralizers is
thus crucially important.}

\begin{lemma}\labell{thm:NZH}
Let \GH be a \ccpair. 
Then $N_G(H)^0$ is $Z_G(H)^0 \. H$.
\end{lemma}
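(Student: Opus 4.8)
The plan is to reduce this group-level identity to a computation of Lie algebras, with compactness of $G$ entering only to supply an invariant complement to $\fh$. First I would check that both sides are closed connected subgroups, so that it suffices to establish one containment together with equality of Lie algebras. Since $Z_G(H)^0$ centralizes $H$ it commutes elementwise with $H$, so $Z_G(H)^0 \. H$ is genuinely a subgroup; it is closed and connected as the image of the compact connected $Z_G(H)^0 \x H$ under multiplication. Both $Z_G(H)^0$ (being connected and centralizing $H$) and $H$ normalize $H$ and lie in $N_G(H)^0$, so $Z_G(H)^0 \. H \leq N_G(H)^0$.

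Next I would identify the Lie algebras. Because $H$ is connected, $\Lie N_G(H) = \mathfrak n_\fg(\fh) = \{X \in \fg : [X,\fh] \sub \fh\}$ and $\Lie Z_G(H) = \f z_\fg(\fh) = \{X \in \fg : [X,\fh] = 0\}$. The containment $\f z_\fg(\fh) + \fh \sub \mathfrak n_\fg(\fh)$ is immediate, so the crux is the reverse inclusion. Here I would fix an $\Ad(H)$-invariant inner product on $\fg$ — available precisely because $G$ is compact — and take $\f m = \fh^\perp$, an $\ad(\fh)$-invariant complement with $\fg = \fh \oplus \f m$. Given $X \in \mathfrak n_\fg(\fh)$, write $X = X_\fh + X_{\f m}$; since $\fh \sub \mathfrak n_\fg(\fh)$, the component $X_{\f m}$ also normalizes $\fh$, giving $[X_{\f m},\fh] \sub \fh$. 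But invariance of $\f m$ forces $[X_{\f m},\fh] \sub \f m$, so $[X_{\f m},\fh] \sub \fh \inter \f m = 0$ and $X_{\f m} \in \f z_\fg(\fh)$. This proves $\mathfrak n_\fg(\fh) = \f z_\fg(\fh) + \fh$.

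To conclude, I would note that $\Lie(Z_G(H)^0 \. H)$ contains both $\f z_\fg(\fh)$ and $\fh$, hence equals their sum $\mathfrak n_\fg(\fh) = \Lie N_G(H)^0$. Two connected subgroups, one contained in the other and sharing the same Lie algebra, must coincide, so $Z_G(H)^0 \. H = N_G(H)^0$. The main — indeed the only nonformal — obstacle is the reverse Lie-algebra inclusion, and that is exactly the step where compactness is indispensable, through the existence of the $\Ad(H)$-invariant splitting $\fg = \fh \oplus \f m$; without an invariant complement the argument that $X_{\f m}$ centralizes $\fh$ breaks down.
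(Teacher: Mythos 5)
Your proof is correct, but it takes a genuinely different route from the paper's. The paper applies its structure theorem for compact connected Lie groups (\Cref{thm:Lie-structure}) to the group $N_G(H)^0$, in which $H$ sits as a connected normal subgroup: that theorem hands over a connected complement $Q$ which automatically centralizes $H$, whence $N_G(H)^0 = Q\.H \leq Z_G(H)^0\.H \leq N_G(H)^0$ in one line. You instead work at the Lie algebra level, proving the identity $\mathfrak n_\fg(\fh) = \f z_\fg(\fh) + \fh$ by splitting off an $\ad(\fh)$-invariant orthogonal complement $\f m = \fh^\perp$ and observing that the $\f m$-component of a normalizing element must centralize $\fh$, then passing back to groups via connectedness. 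Your argument is more self-contained --- it bypasses the decomposition of a compact group into central torus and simple factors --- and it isolates exactly what compactness is used for, namely complete reducibility of the adjoint $\fh$-action on $\fg$; as you note, the same argument works for any connected $H$ whose adjoint action admits an invariant complement, even in a noncompact ambient group. The paper's proof is shorter given that the structure theorem is already stated and needed elsewhere. Both are valid; the only point worth double-checking in yours is the (standard, and true for connected $H$) identification $\Lie Z_G(H) = \f z_\fg(\fh)$ and $\Lie N_G(H) = \mathfrak n_\fg(\fh)$, which you correctly flag as relying on connectedness of $H$.
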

\begin{proof}
\revision{
As $H$ is normal in $N_G(H)^0$, 
it has a connected complement $Q$ in $N_G(H)^0$, 
by \Cref{thm:Lie-structure}.
Since $Q$ centralizes $H$,
we have
$N_G(H)^0 = Q\. H \leq Z_G(H)^0\. H \leq N_G(H)^0\.H = N_G(H)^0$.
	}
\end{proof}

\Cref{thm:NZH} gives a virtual direct product 
decomposition if $H$ is semisimple, 
but otherwise complements to $H$ in $N_G(H)^0$
may only be determined up to some arbitrary choice.



\begin{lemma}\labell{thm:Q}
Let \GH be a pair of compact, connected Lie groups.
Then a subgroup $\defm Q \leq Z_G(H)^0$ can be found
which is a complement to $H$ in $N_G(H)^0$  
and such that every closed, connected subgroup $\wh P \leq N_G(H)^0/H$
is of the form $PH/H$ for a unique closed, connected subgroup $P \leq Q$.
\end{lemma}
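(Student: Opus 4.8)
The plan is to build $Q$ as a connected complement to $H$ in $L \ceq N_G(H)^0$ and then recognize $L/H$ as a \emph{finite central quotient} of $Q$, so that the asserted parametrization becomes the elementary correspondence between closed connected subgroups of a compact group and those of such a quotient. The structure theory of \Cref{thm:Lie-structure} supplies the complement, and \Cref{thm:NZH} guarantees it can be taken inside $Z_G(H)^0$.

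First I would apply \Cref{thm:NZH} to write $L = Z_G(H)^0\. H$, and then apply \Cref{thm:Lie-structure} to the connected normal subgroup $H \ideal L$ to obtain a connected complement $Q \ideal L$ with $L = QH \iso Q \ox_F H$, where $F \ceq Q \inter H$ is finite and central in both factors. Since $Q$ centralizes $H$, it lies in $Z_G(H)^0$, which establishes the first clause. Next I would restrict the quotient projection $\pi\: L \epi L/H$ to $Q$: its kernel is $Q \inter H = F$ and its image is $QH/H = L/H$, so $\pi|_Q$ descends to an isomorphism $Q/F \isoto L/H$ sending a subgroup $P \leq Q$ to $PH/H$. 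As $F$ is finite and central, $\pi|_Q \: Q \epi L/H$ is a finite covering homomorphism.

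It then remains to check $\pi|_Q$ induces a bijection on closed connected subgroups. For existence, given a closed connected $\wh P \leq L/H$, I would set $P \ceq \pi\-(\wh P)^0$; because $\pi|_Q$ is a covering, its restriction to the closed subgroup $\pi\-(\wh P)$ is again a covering of the connected group $\wh P$, so the identity component $P$ already surjects onto $\wh P$, giving $PH/H = \pi(P) = \wh P$. For uniqueness, note that for any closed $P \leq Q$ one has $\pi\-\pi(P) = PF$; hence if two closed connected $P_1,P_2 \leq Q$ satisfy $\pi(P_1) = \pi(P_2)$, then $P_1 F = P_2 F$. Since each $P_i$ is a finite-index—hence open—closed connected subgroup of the compact group $P_i F$, it is the identity component $(P_i F)^0$, so $P_1 = (P_1 F)^0 = (P_2 F)^0 = P_2$.

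The main obstacle is resisting the temptation to invoke the bijection between subgroups and Lie subalgebras: closed connected subgroups of a compact group are \emph{not} in bijection with subalgebras (irrational windings being the standard obstruction), so the parametrization cannot be read off at the Lie-algebra level and must instead be derived directly from the finite covering $\pi|_Q$. The identity-component argument, used in both directions, is exactly what compensates for the failure of the naive algebraic correspondence and is where the care is required.
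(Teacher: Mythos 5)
Your proof is correct and follows essentially the same route as the paper: both construct a connected complement $Q\leq Z_G(H)^0$ to $H$ in $N_G(H)^0$ (the paper via the $B$-orthogonal complement $\f q=\fh^\perp$ in $\f n_\fg(\fh)$, you via \Cref{thm:Lie-structure}) and then take $P=\big(\pi\-(\wh P)\inter Q\big)^0$. Your write-up additionally supplies the finite-covering verification of existence and uniqueness that the paper's terse proof leaves implicit, which is a welcome elaboration rather than a divergence.
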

\begin{proof}
	Recall that $\fg$ carries an $\Ad(G)$-invariant inner product $-B$.
	The orthogonal complement $\f q = \fh^\perp$ in 
	the normalizer $\f n = \f n_{\fg}(\fh)$ 
	is automatically an ideal of $\f n$,
	since as $\f h$ is an ideal, for $(h,n,q) \in \f h \x \f n \x \f q$, 
	we have $B\big(h,[n,q]\big) = B\big([h,n],q\big) = 0$.
	Let $Q = \exp \f q$.
	Writing $\pi\: N_G(H)^0 \lt N_G(H)^0/H$ for the projection
	and given a connected $\smash{\wh P} \leq N_G(H)^0/H$, 
	we take $P = \big(\pi\-(\smash{\wh P}) \inter Q\big){}^0$.
\end{proof}

\bthm[{\emph{Cf.} Kramer~\cite[\S3.5, p.~21--22]{kramer2002homogeneous}, 
	Onishchik~\cite[pp.~73--76]{onishchik}}]\!\!\!\footnote{\ 
	A result of this kind for $G/H = S^{\mr{odd}}$ is already proved by Montgomery--Samelson%
	~\cite[\S\S4--7]{montgomerysamelson1943}.
}
\labell{thm:enlargement}
Let $(G,H)$ be an effective pair of compact, connected Lie groups.
Then for
any effective enlargement $(\widehat{G},\widehat{H})$  
there exists
a closed, connected subgroup $P$ of $Z_{G}(H)^0$, 
meeting $H$ in a finite subgroup,
such that $\big(G \x P,(H \x 1) \. \D P\big)$ is a cover of $(\wh G,\wh H)$.
This 
is again an enlargement of $(G,H)$
and is \isotf if and only if $(\widehat{G},\widehat{H})$ is.
 
The action of $G \x P$ on $G/H$
is given by $(g,p)\.xH \ceq gxp\-H$.
\ethm

\begin{proof}
	Let $\defm{\widehat P} \ideal \widehat{G}$ be a complement of $G$.
	By effectiveness, we may identify $\widehat{G}$ with a subgroup of $\Homeo(G/H)$
	and $\wh P$ with a subgroup of the centralizer $\defm C$ of $G$ 
	in $\Homeo(G/H)$.
	Any map $\phi \in C$
	satisfies $\phi(1H) \eqc \ell H$ for some $\ell \in G$,
	and since $hH = H$ for each $h \in H$,
	one has $h\ell H = h\phi(H) = \phi(hH) = \phi(H) = \ell H$,
	so $\ell$ lies in the normalizer $N_{G}(H)$.
	Now, $\phi(gH) = g\ell H = gH\ell H$ for $g \in G$,
	so that $\phi$ is the right translation by $\ell H$ on $G/H$.
	Thus $C$ is isomorphic to $N_{G}(H)/H$,
	acting on $G/H$ by the standard effective right action.

	Since $\widehat{G} = G \widehat P$ acts on the left on $G/H$,
	to proceed we convert the action of $\widehat P \leq N_G(H)/H$ to a left action.
	Using \Cref{thm:Q},
	we fix a centralizing complement $Q$ to $H$ in $N_G(H)^0$
	and find a unique closed, connected subgroup $\defm{P} \leq Q$ 
	such that $\wh P = PH/H$.
	Then $p H\.xH = x p\- H$ 
	for $pH \in \wh P = PH/H$ and $xH \in G/H$,
	so the action of $\widehat{G}$ on $G/H$
	descends from the ineffective action
	$(g,hp)\.xH \ceq gxp\-H$ of $G \x HP$.
	The stabilizer of $1H \in G/H$
	under this action
	is the set $(H \x \trivialgroup)\.\D(HP)$
	of pairs $(h'hp,hp) \in G \x HP$ for 
	$h' \in {H}$ and $hp \in HP$,
	so we have an ineffective enlargement 
	$\big(G \x HP,(H \x \trivialgroup)\.\D(HP)\mnn\big)$.

	Now, $P$ contains a full set of coset representatives of 
	$\wh P = PH/H$,
	so the composite
	$
	G \x P \inc G \x HP \epi G \x \widehat P
	$
	is a surjection with kernel $\trivialgroup \x (H \inter P)$.
	The stabilizer $\smash{(H \x \trivialgroup)} \.\D(HP)$ meets $G \x P$
	in $\smash{(H \x \trivialgroup)}\.\D P$, 
	which we will abbreviate $\defm{H \D P}$.
	Thus $(G \x P,H \D P)$
	is a virtually effective pair 
	covering 
	$(\widehat{G},\widehat{H})$;
	by \Cref{thm:inheritance-eff}, 
	it is \isotf if and only if $(\widehat{G},\widehat{H})$ is.
\end{proof}


As $\wh P$ in \Cref{thm:enlargement} is a subgroup of $N_G(H)^0/H$,
which equals $Z_G(H)^0 H/H$ by \Cref{thm:NZH} 
(and $QH/H$ by \Cref{thm:Q}),
we have the following.

\bprop \labell{thm:enlargement-conditions}
Let $(G,H)$ be an effective pair of compact, connected Lie groups.
Then the existence of a proper virtually effective enlargement $(\widehat{G},\widehat{H})$ of $(G,H)$ 
such that $G$ is normal in $\widehat{G}$ 
is equivalent to each of the following conditions:
\bitem
\item 
$Z_{G}(H)^0$ is not contained in $H$;
\item 
$H$ is strictly contained in $N_{G}(H)^0 = Z_G(H)^0 H$;
\item
$\rk N_{G}(H)^0 = \rk Z_G(H)^0 H > \rk H$;
\item
$\rk Z_{G}(H)^0 H / H = \rk Q > 0$.
\eitem 
\eprop
%
%
%

In the case of a corank-one pair, the third condition of
\Cref{thm:enlargement-conditions}
implies that $N_{G}(H)^0$ is of full rank in ${G}$,
or in other words that $H$ is regular.

\bprop\labell{thm:enlargement-corank1}\labell{thm:regularity}
Let $(G,H)$ be an effective corank-one pair of compact, connected Lie groups.
Then the existence of a proper virtually effective enlargement $(\widehat{G},\widehat{H})$ of $(G,H)$ 
such that $G$ is normal in $\widehat{G}$ 
is equivalent to each of the following conditions:
\bitem
\item $H$ is regular in $G$;
\item 
$\rk Z_{G}(H)^0 H/H = \rk Q = 1$; \emph{i.e.}, $Q$ is isomorphic 
	to one of $\U(1)$, $\Sp(1)$, or $\SO(3)$.
\eitem 
In this case, letting $P \leq Q$ be a closed rank-one subgroup
with maximal torus $T_P$ as in \Cref{thm:enlargement},
$ST_P \x T_P$ is a common maximal torus of $G \x P$ and of $HP \x P$,
and $S\D T_P$ is a maximal torus of $H\D P$.
\eprop

\begin{corollary}\labell{thm:sphere-enlargement}
If \GH is an effective \rsp,
there is a proper, virtually effective enlargement
$(\widehat{G},\widehat{H})$ of $(G,H)$ 
such that $G$ is normal in $\widehat{G}$ 
if and only if $H = H_S$.
\end{corollary}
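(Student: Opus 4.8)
The plan is to exhibit this corollary as the composite of \Cref{thm:enlargement-corank1} and \Cref{thm:isotropyOddSphere}, welded together at the single condition that $H$ be regular. First I would confirm that an effective \rsp \GH is automatically a corank-one pair, so that \Cref{thm:enlargement-corank1} applies and the symbol $H_S$ of \Cref{def:LH} is even defined: by \Cref{thm:dim-criterion} we have $2 = \dim_\Q \H(G/H) \geq 2^{\rk G - \rk H}$, while $\chi(G/H) = 0$ forces $\rk G > \rk H$, so that $\rk G - \rk H = 1$ as required.

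With the corank-one hypothesis in hand, I would invoke \Cref{thm:enlargement-corank1} verbatim: for the effective pair \GH, the existence of a proper virtually effective enlargement $(\widehat G,\widehat H)$ in which $G$ is normal in $\widehat G$ is equivalent to $H$ being regular in $G$ (equivalently, to $\rk Q = 1$). This step converts the entire enlargement question into a question purely about the regularity of the isotropy group $H$, and does all of the geometric work.

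Finally I would close the loop with the concluding clause of \Cref{thm:isotropyOddSphere}, which asserts that for an \rsp one has $H = H_S$ if and only if $H$ is regular. Chaining the two biconditionals yields
\[
\exists\ (\widehat G,\widehat H)
\iff
H \text{ is regular in } G
\iff
H = H_S,
\]
which is exactly the stated equivalence. The only point that demands any care is recognizing that regularity of $H$ is the common hinge shared by the two cited results, and checking that the effectiveness and corank-one hypotheses required by \Cref{thm:enlargement-corank1} are precisely those carried by an effective \rsp; beyond this bookkeeping there is no substantive obstacle, since both nontrivial implications have already been established upstream.
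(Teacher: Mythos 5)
Your proposal is correct and follows essentially the same route as the paper: the paper's one-line proof likewise combines \Cref{thm:enlargement-corank1} (enlargement exists iff $H$ is regular) with the final clause of \Cref{thm:isotropyOddSphere} ($H$ regular iff $H = H_S$ for an \rsp). Your additional check that an effective \rsp is automatically corank one is a reasonable piece of bookkeeping the paper leaves implicit.
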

\begin{proof}
	By \Cref{thm:isotropyOddSphere},
	in this case $H$ is regular with respect to $ST_P$
	if and only if $H = H_S$.
\end{proof}


%

When there does exist an extension, it is essentially unique:

\blem \labell{thm:enlargement-equiv}
Two enlargements
$(G \x  P_1,  H \D  P_1)$ and  $(G \x  P_2,  H \D  P_2)$ 
of an irreducible corank-one pair \GH
as constructed in \Cref{thm:enlargement}
are equivalent if and only if $P_1$ and $P_2$ are isomorphic.
\elem
\bpf
Necessity is immediate.
For sufficiency, 
recall that in \Cref{thm:enlargement},
the $P_j$ can be taken 
as subgroups of a fixed rank-one $Q \leq Z_G(H)^0$.
If $P_1 \iso P_2 \leq Q$ are of type~$A_1$,
then all must be equal. 
Otherwise, $P_1 \iso P_2 \iso S^1$ are maximal tori of $Q$,
so $P_2 = q P_1 q\-$ for some $q \in Q$.
Then $(g,p) \lmt (qgq\-,qpq\-)$ induces 
an equivalence of pairs $(G \x P_1, H \D P_1) \lt (G \x P_2, H \D P_2)$.
\epf

An effective \revision{quotient} of a \mrp gives another \mrp,
and a virtually effective covering of a \mrp
is again a \mrp,
so our reduction only requires us to handle the case 
when $H \D P$ is regular.

\bcor \labell{thm:regularityOfEnlargement}
In the situation of \Cref{thm:enlargement-corank1}, 
the stabilizer $H \D P$ is a regular subgroup of $G \x P$ 
with respect to $ST_{P} \x T_{P}$ if and only if $P \iso S^1$.
\ecor
\bpf
Since $H$ centralizes $\D P$, 
we see 
$ST_{P} \x T_{P}$ normalizes $H \D P$ 
if and only if 
$T_{P} \x T_{P}$ normalizes $\D P$.
This happens if $P = T_{P}$ and not otherwise.
\epf


Now we finally can reduce \isotfity to the irreducible case.

\begin{theorem}\labell{thm:enlargement-isotf}
If $(G, H)$ is a regular, irreducible \rspp,
then there exists an isotropy-formal enlargement 
$(G \x S^1,  H \D S^1)$
if and only if $H_S > H$. 
In this case $(G,H)$ is also \isotf.
\end{theorem}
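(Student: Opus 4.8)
The plan is to transfer the question to the dimension criterion \Cref{thm:dim-criterion} applied to the enlarged pair, after fixing a convenient model for the adjoined circle. Since $(G,H)$ is regular with maximal torus $S$, it sits inside the maximal regular subgroup, so $H\leq H_S$ and the condition ``$H_S>H$'' merely asks whether this inclusion is proper. As $H\leq H_S$ centralizes $\St=\exp\R v$ by \Cref{thm:HS}.\ref{thm:HS-2}, and $v$ is chosen so that $\St$ is a circle, the group $P\ceq\St$ is a circle contained in $Z_G(H)^0$ but not in $H$ (because $\R v\not\leq\fh$). By \Cref{thm:enlargement-equiv} any enlargement of the form $(G\x S^1,H\D S^1)$ is equivalent to the one built from this $P$, so I would work with $P=\St$ throughout. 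The resulting $(G\x P,H\D P)$ is regular with respect to $T\x P$ (since $P$ is its own maximal torus, $T\x P$ normalizes $\D P$; cf. \Cref{thm:regularityOfEnlargement}), and its homogeneous space is diffeomorphic to $G/H$, hence it is a corank-one \rspp with $\dim_\Q\H=4$.

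The crux is to compute $N'\ceq\pi_0 N_{G\x P}(S')$, where $S'=S\.\D P$ is the maximal torus of $H\D P$. Writing $u$ for the generator of the external circle's Lie algebra corresponding to $v$, so that $\ft'=\ft\oplus\R u$, the diagonal torus $\D P$ has Lie algebra $\R(v+u)$ and $\fs'=\fs\oplus\R(v+u)$, with complementary direction $v'=v-u$. Now $W_{G\x P}=W_G$ and it fixes $u$ pointwise; since $v'$ has nonzero component along $u$, \emph{no} element of $W_{G\x P}$ can send $v'$ to $-v'$. This is the step I expect to be the main obstacle, and it is decisive: it means the enlarged pair never falls in the ``$w_0\.v'=-v'$'' branch of \Cref{thm:w0}, so $N'=W_{v'}|_{\fs'}$. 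A short check shows that the stabilizer $W_{v'}$ in $W_G$ equals $W_v$ (an element fixes the anti-diagonal $v'$ iff it fixes $v$), that $W_v$ fixes $v+u$, and that $W_v|_{\fs}=W_{H_S}$ by \Cref{thm:HS}.\ref{thm:WHS}; hence $N'=W_{v'}|_{\fs'}\iso W_{H_S}$, acting through the $\fs$-summand. On the other hand $H\D P$ is a virtual direct product of $H$ with the circle $\D P$, so $W_{H\D P}=W_H$, likewise acting through $\fs$, and thus $W_{H\D P}=W_H\leq W_{H_S}=N'$.

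With these identifications, \Cref{thm:dim-criterion} for the corank-one pair $(G\x P,H\D P)$ with $\dim_\Q\H=4$ says it is \isotf if and only if $4=2\,[N':W_{H\D P}]$, i.e. exactly when $[W_{H_S}:W_H]=2$. If $H_S=H$ this index is $1$, so the enlargement is not \isotf. If $H_S>H$, I would apply \Cref{thm:extend-H} with the subgroup $K=H_S$, which properly contains $H$ and shares the maximal torus $S$: it shows $H_S/H$ is an even-dimensional sphere, whence $[W_{H_S}:W_H]=\chi(H_S/H)=2$ and the enlargement is \isotf, and the same application shows $(G,H)$ itself is \isotf, supplying the final clause. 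Combining the two cases gives that an \isotf enlargement $(G\x S^1,H\D S^1)$ exists if and only if $H_S>H$. The genuinely delicate point is the single observation that the external central circle obstructs any Weyl element from negating $v'$: this collapses the two branches of \Cref{thm:w0} into one, forces $N'\iso W_{H_S}$ rather than a group of twice that order, and thereby reduces the whole equivalence to the numerical condition $[W_{H_S}:W_H]=2$, with all remaining bookkeeping being routine structure theory together with the results already assembled in this section.
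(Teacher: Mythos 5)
Your proof is correct, but it is organized quite differently from the paper's. The paper argues the two directions separately: for sufficiency it passes to the rational sphere pair $(G,H_S)$, enlarges \emph{that} by a circle in $Z_G(H_S)^0$, invokes \Cref{thm:isotropyOddSphere}, and descends to $H$ via the shared maximal torus; for necessity it argues by contradiction, using the already-established second clause to conclude $w_0^G\.v=-v$ and $w_0^G|_\fs\notin W_H$ when $H=H_S$, and then observing that the longest word of $G\x S^\perp$ sends $(v,v)$ out of $\fs+\D\fs^\perp$. You instead compute the component group $N'$ of the normalizer for the enlarged pair once and for all: the external central circle is fixed pointwise by $W_{G\x P}=W_G$, so no Weyl element can negate the complementary direction $v'=v-u$, the ``$w_0\.v'=-v'$'' branch of \Cref{thm:w0} is vacuous, and $N'\iso W_{v}|_\fs=W_{H_S}$ unconditionally; both directions then drop out of the single numerical identity ``\isotf $\iff[N':W_{H\D P}]=[W_{H_S}:W_H]=2$'' from \Cref{thm:dim-criterion}, with $[W_{H_S}:W_H]=\chi(H_S/H)=2$ supplied by \Cref{thm:extend-H} when $H_S>H$. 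The germ of the key observation (adjoining the circle obstructs negation of the complementary vector) is the same as in the paper's converse, but your packaging is more unified and avoids the paper's dependence of the converse on the second clause; the one point to make explicit is that $\fs'=(\R v')^\perp$ only after normalizing the invariant form on the external $\R u$ (or, as you can check, the complement is $\R(\alpha v-\beta u)$ with $\alpha,\beta>0$ for any normalization, which suffices), and that irreducibility is what licenses \Cref{thm:enlargement-equiv} to reduce all circle enlargements to the one built from $P=S^\perp$.
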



\bpf
For the second clause,
if any enlargement $(\widehat G,\widehat H)$ is isotropy-formal,
then $(G,H)$ inherits \isotfity by \Cref{thm:inheritance}.

For the equivalence, 
first suppose $(G,H)$ is a \rspp and $H_S > H$; 
then \Cref{thm:extend-H} says $(G,H_S)$ is a \rsp.
As $H_S$ is regular, \Cref{thm:enlargement-corank1} implies 
$\rk Z_G(H_S)/H_S = 1$
so there is a circle $C \leq Z_G(H_S)^0$ meeting~$H_S$ finitely
and \emph{a~fortiori} also meeting~$H$ finitely.
The enlargement $(G \x C,H_S \D C)$ is again an \rsp,
hence \isotf by \Cref{thm:isotropyOddSphere},
and hence so is the enlargement $(G \x C,H \D C)$
as $H \D C$ and $H_S \D C$ share a maximal torus. 
By \Cref{thm:enlargement-equiv}, 
any other enlargement $(G \x S^1,  H \D S^1)$ is also \isotf.

On the other hand, suppose 
an 
\revision{\isotf}
enlargement $(G \x C,  H \D C)$ 
exists for some circle $C \leq G$.
By \Cref{thm:enlargement-corank1}, 
$(G,H)$ is regular, so $H \leq H_S$
for some maximal torus $S$ of $H$.
\revision{By \Cref{thm:enlargement-equiv},
$(G \x S^\perp, H\D\St)$ is \isotf as well.}
We will prove that if $H_S = H$, 
\revision{this leads to a contradiction,
so that in fact $H_S > H$.}
Now~$S^\perp$ centralizes $H = H_S$ by \Cref{thm:HS},
and by definition $T = S S^\perp$ is a maximal torus of $G$,
so $(\defm{\widehat T}, \defm{\widehat S}) = (T \x S^\perp,S\D S^\perp)$ 
are maximal tori in the enlargement $(G \x S^\perp, H \D S^\perp)$,
and an orthogonal complement 
is given by $\defm{\widehat{S}^\perp} \ceq \big\{
(z,z\-) \in G \x S^\perp: z \in S^\perp\big\}$.
Since $N_{G \x S^\perp}(\widehat T) = N_G(T) \x S^\perp$
and $N_{H\D S^\perp}(S \D S^\perp) = N_H(S)\D S^\perp$,
the inclusion of \GH
induces isomorphisms of Weyl groups $W_G \isoto W_{G \x S^\perp}$
and $W_H \isoto W_{H\D S^\perp}$.
In particular, if $n_0 \in N_G(T)$
represents the longest word~$w_0^G$ of $W_G$,
then $(n_0,1) \in N_{G}(T)  \x S^\perp$
represents $w_0^{\smash{G \x S^\perp}} \in  W_{G \x S^\perp}$.
Moreover, since an element $(g,s^\perp) \in G \x S^\perp$ 
normalizing $S \D \St$ also normalizes $G \inter S\D\St = S$,
and then all elements of $\{g\} \x \St$ induce the same automorphism of $S\D\St$,
the projection $G \x \St \longepi G$ induces an injection
$\defm{\widehat N} \ceq \pi_0 N_{G \x \St}(S\D\St) \longmono \pi_0 N_G(S) = N$. 
\revision{By the second clause in this theorem, $(G,H)$ is \isotf,}
so applying \Cref{thm:spheres} and \Cref{thm:Wv}, 
the longest word $w_0^G$ satisfies
$ w^G_0 \. v = -v$ and 
$ w^G_0|_{ \fs} \notin W_H$.
But the adjoint action $\smash{w_0^{\smash{G \x S^\perp}}}$ 
of $(n_0,1)$ sends $(v,v) \in \revision{\D\fs^\perp <{}} \fs + \D\fs^\perp$
to $(-v,v) \notin \fs + \D\fs^\perp$,
so the restriction $\smash{w_0^{\smash{G \x S^\perp}}|_{\fs\, + \D \fs^\perp}}$ 
is not an element of $\widehat N$
and hence by \Cref{thm:Wv} 
and \Cref{thm:spheres} again,
the enlarged pair 
is not \isotf.
\epf

\begin{corollary}\labell{thm:exclude-reducible}
	A maximal regular, effective but reducible \rspp is not \isotf.
\end{corollary}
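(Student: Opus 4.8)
The plan is to exhibit the given pair, up to a finite cover, as a circle enlargement of an irreducible \rspp, and then to use maximal regularity to force the equality $H_S = H$ that makes \Cref{thm:enlargement-isotf} report failure of isotropy-formality.

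Write $(\widehat G,\widehat H)$ for a maximal regular, effective, reducible \rspp. Reducibility furnishes a proper connected normal subgroup of $\widehat G$ acting transitively, and hence an \isp $(G,H)$ with $G \ideal \widehat G$ proper and $H = G \inter \widehat H$. This subpair is automatically effective: by \Cref{thm:Lie-structure} the group $\widehat G$ is the virtual direct product of $G$ and a complement $\widehat P$ centralizing $G$, so any subgroup normal in $G$ and contained in $H$ is centralized by $\widehat P$, hence normal in $\widehat G = G\widehat P$ and contained in $\widehat H$, hence trivial. Now $(\widehat G,\widehat H)$ is a proper effective enlargement of the effective pair $(G,H)$ with $G$ normal, so \Cref{thm:enlargement} presents it, up to a finite cover, as $(G \x P,\, H \D P)$ for a positive-dimensional $P \leq Z_G(H)^0$ meeting $H$ finitely (were $P$ trivial, the cover would force $(\widehat G,\widehat H)$ to be irreducible by \Cref{thm:cover-irred}). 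Since $(\widehat G,\widehat H)$ is regular and regularity is a cover invariant (\Cref{thm:cover-isotf}), \Cref{thm:enlargement-corank1} shows $H$ is regular in $G$ with $\rk P = 1$, and then \Cref{thm:regularityOfEnlargement} forces $P \iso S^1$; by \Cref{thm:enlargement-equiv} I may replace this circle by $\St = \exp \R v$, which centralizes $H$ and meets it finitely. Thus, up to cover, the pair is $(G \x \St,\, H \D \St)$ with $(G,H)$ an irreducible, regular \rspp.

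By \Cref{thm:enlargement-isotf} this enlargement is \isotf if and only if $H_S > H$, so it suffices to rule that out. This is the crux. Suppose $H_S > H$. The pair $(G \x \St,\, H \D \St)$ has maximal torus $\widehat T = T \x \St$, and $H \D \St$ has maximal torus $\widehat S = (S \x 1)\.\D\St$. I would then test the strictly larger subgroup $M \ceq (H_S \x 1)\.\D\St$. Because $H_S$ and $H$ share the maximal torus $S$, the group $M$ has the same maximal torus $\widehat S$ as $H \D \St$, while $M \supsetneq H \D \St$ since $H_S \supsetneq H$. A one-line conjugation check shows $M$ is regular with respect to $\widehat T$: for $(t,u) \in T \x \St$ and a typical element $(hz,z) \in M$ with $h \in H_S$ and $z \in \St$, one has $(t,u)(hz,z)(t,u)\- = \big((tht\-)z,\, z\big) \in M$, using that $T$ normalizes the regular subgroup $H_S$ and that $T$ is abelian so $t,u,z$ commute. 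Hence $M$ is a closed, connected, $\widehat T$-regular subgroup with maximal torus $\widehat S$ properly containing $H \D \St$.

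This contradicts the maximal regularity of $(G \x \St,\, H \D \St)$, which is inherited from that of $(\widehat G,\widehat H)$ through the cover (\Cref{thm:cover-isotf}) and which, by \Cref{def:LH} and \Cref{thm:HS}, says precisely that $H \D \St$ is the \emph{largest} $\widehat T$-regular connected subgroup with maximal torus $\widehat S$. Therefore $H_S = H$, so \Cref{thm:enlargement-isotf} gives that $(G \x \St,\, H \D \St)$ is not \isotf, and cover-invariance of \isotfity (\Cref{thm:cover-isotf}) shows the original pair $(\widehat G,\widehat H)$ is not \isotf. I expect the only genuine work to be the regularity verification for $M$ together with the bookkeeping that the maximal tori of $M$ and of $H \D \St$ coincide; the reduction to the circle-enlargement form and the invariance statements are immediate from the results already assembled.
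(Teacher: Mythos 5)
Your proposal is correct and follows essentially the same route as the paper's own proof: reduce via \Cref{thm:enlargement}, \Cref{thm:enlargement-corank1}, and \Cref{thm:regularityOfEnlargement} to a circle enlargement $(G \x S^1, H\D S^1)$ of an irreducible regular \rspp, invoke \Cref{thm:enlargement-isotf}, and derive the contradiction with maximal regularity from the strictly larger regular subgroup $H_S \D S^1$ when $H_S > H$. The extra details you supply (effectiveness of the subpair, the explicit conjugation check for $M$, the normalization $P = S^\perp$) are all sound elaborations of steps the paper leaves implicit.
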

\begin{proof}
  By \Cref{thm:enlargement}, \Cref{thm:enlargement-corank1} and \Cref{thm:regularityOfEnlargement},
  such a pair
  $(G,H)$ is finitely covered
  by $(G' \x S^1, H' \D S^1)$, where $(G',H') < (G,H)$ is irreducible
  with $G'/H' \lt G/H$ a homeomorphism and $S^1 \revision{{}\leq{}} Z_{G}(H')$ 
  meets $H'$ in a finite set.
  This cover is again maximal regular by \Cref{thm:cover-isotf} 
  and virtually effective.
  If $(G,H)$ were \isotf, then by \Cref{thm:enlargement-isotf} applied to $(G',H')$,
  we would have $H_{S'} > H'$ for $S'$ a maximal torus of $H'$,
  but then we would have $H_{S'} \D S^1 > H' \D S^1$,
  so that $(G \x S^1, H_{S'} \D S^1)$ would be a regular pair properly containing
  $(G' \x S^1, H' \D S^1)$, which we asssumed was maximal regular.
\end{proof}

\begin{corollary}
	A maximal regular, effective \rspp $(G,H)$ with $Z_G(H)^0 H/H \iso A_1$ is not \isotf.
\end{corollary}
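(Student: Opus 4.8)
The plan is to extract, from the factor $Q \iso A_1$ sitting in $Z_G(H)^0$, a Weyl-group element of $G$ that inverts $v$ while fixing $\fs$ pointwise; by \Cref{thm:Wv}.\ref{thm:N} this will force $N = W_{H_S}$, and then \Cref{thm:spheres} delivers the conclusion. I would first record the standing reductions: since $(G,H)$ is a maximal regular pair, $H = H_S$ for a maximal torus $S$ of $H$, with $T = S\cdot S^\perp$ and $S^\perp = \exp \R v$ as in \Cref{def:LH} and \Cref{def:sec2}. Using \Cref{thm:NZH} and \Cref{thm:Q}, fix a complement $Q \leq Z_G(H)^0$ to $H$ in $N_G(H)^0 = Z_G(H)^0 H$, with $\f q = \fh^\perp$ inside $\f n_\fg(\fh)$; by hypothesis $\f q$ is of type $A_1$, so $Q$ is $\Sp(1)$ or $\SO(3)$ and $\rk Q = 1$.

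The geometric heart of the argument is to locate the maximal torus $T_Q$ of $Q$ precisely. Since $\Lie(T_Q)\subseteq \f q = \fh^\perp$ we have $\Lie(T_Q)\perp\fh\supseteq\fs$, and because $T_Q$ centralises $S$ we may arrange $T = S\cdot T_Q$. Then $\Lie(T_Q)$ is a one-dimensional subspace of $\ft = \fs\oplus\R v$ orthogonal to $\fs$, so $\Lie(T_Q) = \R v$ and $T_Q = S^\perp$. Next I would analyse the nontrivial Weyl element of $Q$: pick $n_Q \in N_Q(T_Q)$ representing the nonidentity class of $W_Q \iso \Z/2$. As $n_Q \in Q \leq Z_G(H)^0$ it centralises $S$ (so $\Ad(n_Q)|_\fs = \id$) and normalises $T = S\cdot T_Q$, hence represents an element $w_Q \in \tN \leq W_G$ acting on $\ft = \fs\oplus\R v$ by $\id_\fs \oplus (-\id)_{\R v}$; in particular $w_Q\cdot v = -v$.

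Now I would push this through \Cref{thm:w0}: the existence of $w_Q$ inverting $v$ forces $w_0\cdot v = -v$ and $\tN = \ang{W_v,w_0}$, with both $w_Q$ and $w_0$ lying in the single coset of $W_v$ in $\tN$ that sends $v$ to $-v$. Consequently $u := w_0 w_Q^{-1}$ fixes $v$, so $u \in W_v$, and restricting to $\fs$ gives $w_0|_\fs = u|_\fs\cdot w_Q|_\fs = u|_\fs \in W_v|_\fs = W_{H_S}$ by \Cref{thm:Wv}.\ref{thm:WHS}. Thus the ``both conditions'' alternative of \Cref{thm:Wv}.\ref{thm:N} fails, so $N = W_{H_S} = W_H$ and $|N| = |W_H|$. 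Finally, since $(G,H) = (G,H_S)$ is a \rspp, clauses \ref{item:product} and \ref{item:converse} of \Cref{thm:spheres} say it is \isotf exactly when $|N|\neq|W_H|$; the equality just proved shows it is not.

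The main obstacle is the bookkeeping of the middle paragraph—verifying that the maximal torus of the $A_1$-factor is exactly $S^\perp$ (equivalently $\Lie(T_Q) = \R v$) and that its Weyl reflection acts as $-\id$ on $\R v$ but trivially on $\fs$—since the entire argument turns on producing a genuine element of $W_G$ that inverts $v$ yet is invisible on $\fs$. I would also remark that, in contrast to the preceding \Cref{thm:exclude-reducible}, no appeal to (ir)reducibility is required here: the conclusion follows uniformly from the $A_1$ hypothesis, the nonabelianness of $Q$ being exactly what supplies the needed Weyl involution.
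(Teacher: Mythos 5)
Your proposal is correct and follows essentially the same route as the paper: both identify $S^\perp$ as the maximal torus of the $A_1$-complement $Q\leq Z_G(H)^0$, use the nontrivial Weyl element of $Q$ to produce an element of $\tN$ that inverts $v$ while acting trivially on $\fs$, and conclude $N=W_v|_\fs=W_{H_S}=W_H$ so that \Cref{thm:spheres} rules out isotropy-formality. The only cosmetic difference is that you route the conclusion through the coset computation $w_0 w_Q^{-1}\in W_v$ and clause \ref{thm:N} of \Cref{thm:Wv}, whereas the paper reads off $N=\ang{W_v|_\fs, w|_\fs}=W_v|_\fs$ directly from \Cref{thm:w0}; these are the same observation.
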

\begin{proof}
\revision{    
Due to maximal regularity, so $H = H_S = [L_S,L_S]S$,
which by \Cref{thm:HS}.\ref{thm:HS-2} centralizes~$S^\perp$.
Since $S^\perp$ does not lie in $H$ 
(lest $H$ contain $T = SS^\perp$),
it descends to a circle subgroup of $Z_G(H)^0 H / H$. 
Notice that the nontrivial element $\bar w$ of the Weyl group
of $Z_G(H)^0 H / H = A_1$
reflects the tangent line to this circle.
The quotient $A_1$ corresponds to a normal complement~$Q$ to $H$
in $N_G(H)^0$ which lies in $Z_G(H)^0$, 
and has $S^\perp$ as a maximal torus, 
by \Cref{thm:Q},
and $\bar w$ lifts to the nontrivial element of $W_Q$,
represented by some $q \in Q$.
Since $Q$ centralizes $H \leq S$ and normalizes~$S^\perp$,
it normalizes $T = SS^\perp$,
and so induces an element $w \in W_G$
reflecting the tangent line $\R v$ to~$S^\perp$.
Thus, in the notation of \Cref{def:tN} and \Cref{thm:w0},
we have $\tilde N = W_{\{\pm v\}} = \ang{W_v, w} \neq W_v$.
But $q$ centralizes $S$, 
so $w$ acts trivially on $\mathfrak s$, 
and hence $N = \ang{W_v |_{\mathfrak s}, w|_\fs} = W_v|_\fs$. 
By \Cref{thm:spheres}.\ref{item:product}, then,
\GH cannot be isotropy-formal.
}
\end{proof}

\subsection{Recapitulation}\labell{sec:procedure}

We have now assembled a long list of reductions
from the generic case to a highly constrained one,
which we summarize  as a proof of the algorithm.

\begin{proof}[Proof of \Cref{thm:alg}]
\ 
	
\textbf{1}.
By \Cref{thm:CF}, 
	\isotfity of \GK is equivalent to that of $(G,H_S)$.

\textbf{2}.
The evenness of $d$ follows since exactness of \eqref{eq:Gysin}
implies the dimensions of $H^{\mr{even}}(G/H_S)$ and $H^{\mr{odd}}(G/H_S)$
are equal.
By \Cref{thm:isotf-circle-factor}, if $\pi_1(G/H_S)$ is infinite,
then \GHS is \isotf.

\textbf{3}.
The case analysis is \Cref{thm:spheres}.
By \Cref{thm:inheritance-eff}, 
\isotfity of $(G,H_S)$ is equivalent to that of the effective
\revision{quotient} $(\ol G,\ol H)$.
By \Cref{thm:inheritance-regularity}, $(\ol G,\ol H)$ is again a \mrp,
and by definition, $(\ol G,\ol H)$ is a \rspp if and only if $(G,H_S)$ is.

\textbf{4}.
This is a \rspp.
\revision{By \Cref{thm:cover-constr},
we can find a pair $(\widetilde G, \widetilde H)$ covering $(\ol G,\ol H)$
with $\pi_1(\widetilde G/\widetilde H) = 0$ and $\widetilde H = \ol H$.
This is again a \mrp and a \rspp by \Cref{thm:cover-isotf}.}
In \Cref{sec:sphere-product-class},
we construct the list of irreducible {\rspp}s $(\widetilde G, \widetilde H)$
with simply-connected $\widetilde G/\widetilde H$
in \Cref{table:Kramer+Wolfrom}.
This list includes the irreducible {\mrp}s which are {\rspp}s.
If $(\wt G,\wt H)$ is \revision{not in the table,
then it is reducible, and by \Cref{thm:exclude-reducible},
it is not \isotf.}
If $(\wt G,\wt H)$ is in the table, it is irreducible; 
in \Cref{sec:isotf} we determine which entries of the table are \isotf
through a case analysis hinging on \Cref{thm:spheres} and \Cref{thm:Wv}.\ref{thm:N}.
\end{proof}

\section{Homogeneous products of rational spheres}\labell{sec:cases}

The previous section should 
make it clear that we need to classify irreducible {\rspp}s,
which we will accomplish in this section.

\begin{discussion}[Classes of cases]\labell{rmk:subdivision}
Up to a bounded error 
(a handful of missing and spurious cases, 
which we will repair and exclude in detail in 
\Cref{rmk:comparison-Kramer}),
this classification is already present in the work of Kramer 
and his students
in the cases of 
\begin{itemize}
\item 
	$S^n \x S^m$ for $n$ odd, $m$ even and $n > m \geq 4$ (Kramer~\cite{kramer2002homogeneous}),
\item 
	$S^n \x S^{n'}$ for odd $n,n' \geq 3$~(Kramer~\cite{kramer2002homogeneous}),
\item 
	$S^n \x S^2$ for odd $n > 2$ (Wolfrom~\cite{wolfrom2002}), and
\item 
	$S^1 \x S^\ell$ for any $\ell$ (Bletz-Siebert~\cite{bletzsiebert2002}).
\end{itemize}
Beyond examining this work, it thus remains only for us to analyze the cases of 
\begin{itemize}
\item
		$S^n \x S^m$ for $n$ odd, $m$ even and $m > n \geq 3$, which we need, and
\item 
	$S^m \x S^{m'}$ for $m,m'$ both even,
	which we include for completeness.
\end{itemize}
We will prove in \Cref{section:unexamined}
that the pairs in the two previously unexamined cases are all 
covered by products of two homogeneous rational cohomology spheres,
so that we do not find any new classes of example.
\end{discussion}

Our analysis and repair of the classification
of {\rspp}s
is greatly aided by the doctoral disseration of Kamerich~\cite{kamerich},
which classifies the irreducible pairs $(G,H)$ 
such that $G/H$ is
\emph{homeomorphic} to the product of two spheres, but
which the other authors named above seem not to use.\footnote{\ 
	Kramer also classifies the irreducible pairs such that
	$G/H$ has the \emph{integral} cohomology of a product of spheres~\cite[pp.~x--xi]{kramer2002homogeneous}---in fact,
	this is his main goal of the first part of his monograph---%
	although he is missing some cases implied by 
	Kamerich's classification, which was not accessible to 
	him at the time [personal communication].
}
Although Kamerich's goals differ from ours,
the rational classification can be extracted 
with some additional analysis
from his preliminary results assembling a list of candidates,
which includes a number of cases not considered by Kramer;
see \Cref{rmk:comparison-Kamerich}.

In the first subsection to follow,
we state what is known in the $n = 1$ case.
In the next two subsections we will extend 
the classification to cover the new cases, such as they are,
and in the subsequent, much longer, subsection, 
we will recapitulate, revise, and verify the existing classification.

\subsection{The case of $S^1 \x S^\ell$}\labell{sec:circle-cases}

By \Cref{thm:isotf-circle-factor},
we do not need a full classification of {\rspp}s
with $n = 1$, but 
for completeness we recount what is known.
As we have seen in \Cref{thm:isotf-circle-factor},
in this case any such pair covers a pair of the form 
$(G',H') \x (S^1,\trivialgroup)$
with $G'/H'$ a rational cohomology sphere,
so the classification of such (effective, resp. irreducible) pairs up to covers
reduces to the classification of (effective, resp. irreducible) pairs $(G',H')$
with $G'/H'$ a rational cohomology sphere up to covers.
These are completely known, and tabulated in \Cref{table:EvenRationalSphere,table:OddRationalSphere,table:OddRationalSphereEffective}.

Bletz-Siebert has also studied pairs \GH with the rational homotopy 
of $S^1 \x S^\ell$ and $H$ not necessarily connected,
producing something a bit finer-grained.
We quote three such results to give the flavor.

\begin{theorem}[Bletz-Siebert~{\cite[Thm.~2.5.11]{bletzsiebert2002}}]
	Let \GH be a pair of compact Lie groups, $G$ connected,
	such that $\pi_1(G/H) \iso \Z$
	and $\pi_*(G/H;\Q) \iso \pi_*(S^1 \x S^\ell;\Q)$ with $\ell \geq 2$.
	Then component group $\pi_0(H)$ is cyclic
	and there is a circle subgroup $C \leq Z(G)^0$
	complementary to the commutator subgroup $[G,G]$
	such that the restricted action of $G' = [G,G]C$ on $G/H$ is transitive
	with stabilizer $H' = [G,G] \inter H = (G' \inter H)^0$,
	and $G/H$ is finitely covered by $S^1 \x [G,G]/H'$.
\end{theorem}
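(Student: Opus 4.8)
The plan is to read the statement off the long exact homotopy sequence of the bundle $H \to G \to G/H$ together with the virtual direct product structure of $G$ furnished by \Cref{thm:Lie-structure}, in close parallel with the proof of \Cref{thm:isotf-circle-factor}.

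First I would extract the component group. Since $G$ is connected, the tail of the homotopy sequence reads $\pi_1(G) \to \pi_1(G/H) \to \pi_0(H) \to \pi_0(G) = 1$, so the connecting map exhibits the finite group $\pi_0(H)$ as a quotient of $\pi_1(G/H) \iso \Z$; being a quotient of $\Z$, it is cyclic. Next I would pin down the rational picture: tensoring the same sequence with $\Q$ and using $\pi_0(H)\ox\Q = 0$ identifies $\pi_1(G/H;\Q)$ with $\coker\big(\pi_1(H;\Q)\to\pi_1(G;\Q)\big)$. Because $[G,G]$ is semisimple, $G \longepi G\ab \ceq G/[G,G]$ induces an isomorphism on $\pi_1(-;\Q)$, so $\pi_1(G;\Q) \iso \pi_1(G\ab;\Q)$ has dimension $\dim G\ab$, while the image of $\pi_1(H;\Q)$ is $\pi_1(\ol S;\Q)$ for $\ol S$ the identity component of the (toral) image of $H$ in $G\ab$. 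The hypothesis $\pi_1(G/H;\Q)\iso\Q$ therefore forces $\dim \ol S = \dim G\ab - 1$.

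With this in hand I would build the transitive subgroup. By \Cref{thm:Lie-structure}, $G$ is the virtual direct product $Z(G)^0 \. [G,G]$, so I can select a circle $C \leq Z(G)^0$ whose image $\ol C$ in $G\ab$ is a complement to $\ol S$, and set $G' \ceq [G,G]\.C$. (Then $C$ is a complement to $[G,G]$ inside $G'$, literally so in $G$ when $\dim G\ab = 1$.) The set $G'H$ is a union of $[G,G]$-cosets, and its image in the abelianization contains $\ol C\,\ol S = G\ab$, hence is everything, so $G'H = G$; thus $G'$ acts transitively on $G/H$ and the orbit map identifies $G/H$ with $G'/(G'\inter H)$. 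To compute the stabilizer, note that the projection $G' \to \ol C$ with kernel $[G,G]$ sends $G'\inter H$ into the finite set $\ol C\inter\ol H$, so $(G'\inter H)^0$ lies in $[G,G]$, giving $H' \ceq (G'\inter H)^0 = ([G,G]\inter H)^0$.

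Finally I would assemble the cover. The multiplication $[G,G]\x C \longepi G'$ is a finite central covering homomorphism (its kernel is the finite group $C\inter[G,G]$), so pulling the stabilizer $G'\inter H$ back along it and quotienting realizes $G/H \iso G'/(G'\inter H)$ as a finite quotient of $\big([G,G]/H'\big) \x \big(C/(C\inter H)\big)$; since $C\inter H$ is finite, $C/(C\inter H)$ is a circle, and this is the asserted cover $S^1 \x [G,G]/H'$. As a check, a finite cover is a rational cohomology isomorphism, so $\H([G,G]/H';\Q)\iso\H(S^\ell;\Q)$, i.e.\ $[G,G]/H'$ is a rational cohomology $S^\ell$. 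The main obstacle is precisely this last bout of component-group bookkeeping: because $H$ need not be connected, the clean identities $H' = [G,G]\inter H = (G'\inter H)^0$ asserted in the statement — in particular the connectedness of $[G,G]\inter H$, which upgrades my $([G,G]\inter H)^0$ — are invisible to the dimension count and require careful tracking of how the finite groups $\pi_0(H)$, $C\inter[G,G]$, and $C\inter H$ fit together.
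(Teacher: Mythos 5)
A point of reference first: the paper does not prove this statement. It is quoted from Bletz-Siebert's dissertation (``we quote three such results to give the flavor''), so the only in-paper comparison available is the proof of \Cref{thm:isotf-circle-factor}, which treats the connected-$H$ analogue. Your argument closely parallels that proof: the same choice of a circle $C \leq Z(G)^0$ whose image complements the image $\ol S$ of $H$ in $G\ab$, and the same balanced-product bookkeeping at the end. Every step you actually carry out is correct --- the cyclicity of $\pi_0(H)$ from the connecting homomorphism of the principal bundle, the count $\dim \ol S = \dim G\ab - 1$, the transitivity of $G' = [G,G]C$, the identification $(G'\inter H)^0 = ([G,G]\inter H)^0$, and the finite covering by $S^1 \x [G,G]/H'$.

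The genuine gap is the one you flag and then leave open: the theorem asserts $H' = [G,G]\inter H$, i.e.\ that $[G,G]\inter H$ is \emph{connected}, whereas your argument only delivers its identity component. This is not routine bookkeeping; it is exactly where the hypothesis $\pi_1(G/H) \iso \Z$ (as opposed to merely infinite, which is all \Cref{thm:isotf-circle-factor} assumes, and why that theorem makes no such claim) must be used. One way to close it: the preimage of $\ol H$ under $G \to G\ab$ is precisely $[G,G]H$, so one gets a fiber bundle $F \to G/H \to G\ab/\ol H$ whose base is a circle and whose fiber $F = [G,G]H/H \iso [G,G]/\big([G,G]\inter H\big)$ is connected, being a quotient of the connected group $[G,G]$. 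Exactness of $\pi_1(G/H) \to \pi_1(G\ab/\ol H) \to \pi_0(F) = 1$ shows $\pi_1(G/H) \to \Z$ is surjective, hence, being a surjection $\Z \to \Z$, injective; so the image of $\pi_1(F) \to \pi_1(G/H)$ is trivial, while the vanishing of $\pi_2(G\ab/\ol H)$ makes that map injective. Hence $F$ is simply connected, and the exact sequence $\pi_1(F) \to \pi_0\big([G,G]\inter H\big) \to \pi_0\big([G,G]\big) = 1$ of the principal bundle $[G,G] \to F$ forces $[G,G]\inter H$ to be connected. Without some such argument you have proved a slightly weaker statement than the one quoted.
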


\begin{theorem}[Bletz-Siebert~{\cite[Lem.~6.3.6]{bletzsiebert2002}}]
	Let \GH be a pair of compact, connected Lie groups such that 
	$\pi_*(G/H;\Q) \iso \pi_*(S^1 \x S^\ell;\Q)$ with $\ell \geq 3$. 
	Then there are a circle subgroup $C \leq Z(G)^0$
	and a simple subgroup $G' \ideal G$
	such that the virtual direct product $G' \. C \ideal G$
	acts transitively on $G/H$,
	with stabilizer $(G' \inter H)^0$ either simple or trivial.
\end{theorem}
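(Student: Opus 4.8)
The plan is to separate the two spherical factors: peel off a central circle to account for the $S^1$, then show that the remaining semisimple part acts transitively on a rational $\ell$-sphere and can be reduced to a single simple factor. First I would record the rational-homotopy numerology from the fibration $H\to G\to G/H$. The rational homotopy of a compact connected Lie group is concentrated in odd degrees, with total dimension equal to its rank and with degree-one part of dimension $\dim Z(G)^0$; so the long exact sequence $\cdots\to\pi_n(H;\Q)\to\pi_n(G;\Q)\to\pi_n(G/H;\Q)\to\cdots$ determines everything from $\pi_*(G/H;\Q)\iso\pi_*(S^1\x S^\ell;\Q)$. When $\ell$ is odd this forces $\pi_*(H;\Q)\to\pi_*(G;\Q)$ to be injective with two-dimensional cokernel, one generator in degree $1$ and one in degree $\ell$; when $\ell$ is even there is additionally a paired kernel and cokernel in degrees $\ell-1$ and $2\ell-1$. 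In every case exactly one extra central direction appears in degree one, and the degree-$\ell$ generator is carried by the semisimple part $[G,G]$.

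Next I would split off the circle. By \Cref{thm:Lie-structure}, $G$ is a virtual direct product of $Z(G)^0$ and $[G,G]$, and the degree-one computation shows the image of $Z(H)^0$ has corank one in $Z(G)^0$. Hence I can choose a circle $C\leq Z(G)^0$ complementary to $[G,G]\.H$ and meeting it finitely, exactly as in Bletz-Siebert's preceding theorem; then $[G,G]\.C$ surjects onto $G/H$ and so acts transitively. Setting $L\ceq[G,G]$ and $H'\ceq(L\inter H)^0$, the problem reduces to showing that a single simple factor of $L$ already acts transitively on $X\ceq L/H'$, which by the first paragraph is a rational cohomology $\ell$-sphere.

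The crux is to isolate the transitive simple factor. Writing $L=G_1\cdots G_r$ as a virtual product of simple groups (\Cref{thm:Lie-structure}), the essential degree-$\ell$ class of $X$ should be carried by a single factor $G'=G_i$, the rational homotopy of the remaining factors being matched by that of $H'$. I would then form the fibration
\[
G'H'/H' \lt X \lt L/(G'H'),
\]
legitimate since $G'\ideal L$ makes $G'H'$ a closed subgroup, and compare its Serre spectral sequence against $\H(X)\iso\H(S^\ell)$: the base $L/(G'H')$ is a homogeneous space of $\prod_{j\neq i}G_j$ and is rationally trivial by the previous sentence, hence a rational point, so $G'H'=L$ and $G'$ is transitive. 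It then remains to analyze the stabilizer: $G'/(G'\inter H')^0\simeq_\Q S^\ell$ with $G'$ simple, and the classification of compact simple groups acting transitively on rational homology spheres (Montgomery--Samelson, Borel, Onishchik) identifies $(G'\inter H')^0$ and exhibits it as simple or trivial. Transporting back through the finite covers and the circle splitting yields the desired $C$, the simple $G'$, and the stabilizer $(G'\inter H)^0$.

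The main obstacle is the isolation step: showing that the remaining simple factors are genuinely redundant, i.e.\ that the escaping rational-homotopy direction concentrates in one factor and that the base $L/(G'H')$ collapses rationally rather than contributing a second cohomology generator, which would make $X$ a rational product of two spheres rather than a single sphere. Controlling the possibly nonzero transgressions in the spectral sequence, together with the separate and more delicate bookkeeping in the even-$\ell$ (equal-rank) case where the stabilizer has the same rank as $G'$, is where the real work lies; the circle splitting and the rational-homotopy accounting are comparatively routine structure theory.
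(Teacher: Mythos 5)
The first thing to note is that the paper does not prove this statement at all: it is quoted verbatim from Bletz-Siebert's dissertation, explicitly ``to give the flavor'' of his finer-grained results, and the paper's own reduction in the $S^1\times S^\ell$ case runs instead through \Cref{thm:isotf-circle-factor}. So there is no in-paper argument to measure your proposal against, and it has to stand on its own.

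As written it does not, because the step you yourself flag as ``where the real work lies''---isolating a single simple factor $G'$ of $L=[G,G]$ that already acts transitively---is the entire content of the lemma, and your sketch of it is circular. Knowing that the cokernel of $\pi_\ell(H';\Q)\to\bigoplus_i\pi_\ell(G_i;\Q)$ is one-dimensional does not single out a factor $G_i$: a line in a direct sum can project nontrivially onto several summands, with the matching homotopy of $H'$ sitting diagonally across them. Likewise, your spectral-sequence comparison for $G'H'/H'\to X\to L/(G'H')$ deduces $G'H'=L$ from the base being rationally trivial, but establishing that rational triviality is exactly the isolation problem restated. In the paper's own analogous reductions (\Cref{prop:Kramer-reduction}, and especially \Cref{lem:noSimplePair}) this step is carried out by Onishchik's degree criterion (\Cref{thm:deg}) followed by an explicit walk through the Killing--Cartan classification, and the authors say outright that they know of no conceptual reason why the excluded configurations cannot occur; a soft homotopy-theoretic argument is not known to suffice. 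Finally, your closing appeal to the classification of transitive actions on rational spheres needs care in the even-$\ell$ branch: for $(G,H)=\big(\Sp(2)\times S^1,\ \Sp(1)\times\Sp(1)\big)$ one has $\pi_*(G/H;\Q)\iso\pi_*(S^1\times S^4;\Q)$, yet the stabilizer $\Sp(1)\times\Sp(1)$ is neither simple (in the sense of \Cref{def:virtual-product}) nor trivial, so that case must either be argued separately or the conclusion read with ``semisimple''; your outline does not engage with this.
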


\begin{theorem}[Bletz-Siebert~{\cite[Thm.~6.3.7]{bletzsiebert2002}}]
	Let \GH be an irreducible pair of compact Lie groups 
	such that $G$ is connected,
	$\pi_1(G/H)$ is torsion-free, and 
	$\pi_*(G/H;\Q) \iso \pi_*(S^1 \x S^\ell;\Q)$ with $\ell \geq 3$. 
	Then $\pi_0(H)$ is cyclic,
	$[G,G]$ is simple, $Z(G)^0$ is a circle, 
	$H^0$ is $H \inter [G,G]$,
	and $[G,G]/H^0$ is a simply-connected rational cohomology $S^\ell$.
\end{theorem}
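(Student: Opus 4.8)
The plan is to read off the five conclusions from the long exact rational homotopy sequence of the fibration $H \to G \to G/H$, the structure theorem for compact connected Lie groups (\Cref{thm:Lie-structure}), and the irreducibility hypothesis (\Cref{def:irreducible}), in an interlocking order. First I would dispose of $\pi_0(H)$: since $G$ is connected, the connecting map $\pi_1(G/H) \to \pi_0(H)$ is onto, and because $\pi_1(G/H)$ is abelian, torsion-free, and of rational rank one (the last since $\pi_1(G/H;\Q) \iso \pi_1(S^1 \x S^\ell;\Q) = \Q$ as $\ell \geq 3$), it is infinite cyclic; hence its quotient $\pi_0(H)$ is cyclic, and in fact finite because $H$ is compact.

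Next I would pin down the central torus. Write $G = Z(G)^0 \cdot [G,G]$ and set $z = \dim Z(G)^0$, so that $\pi_1(G;\Q) \iso \pi_1(G^{\mathrm{ab}};\Q) \iso \Q^z$ and the image of $H^0$ in the torus $G^{\mathrm{ab}}$ is a subtorus $\bar S$ with $\dim\bar S = z - \dim_\Q H^1(G/H;\Q) = z - 1$ (the cokernel computation of $\pi_1(H^0) \to \pi_1(G)$ rationally); in particular $z \geq 1$. To force $z = 1$ I would invoke irreducibility: assuming $z \geq 2$, pick a circle $\hat S' \leq Z(G)^0$ whose image in $G^{\mathrm{ab}}$ is a complement to $\bar S$. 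Then $K \ceq \hat S' \cdot [G,G]$ is a proper normal subgroup of $G$, and since the images of $K$ and of $H$ together span $G^{\mathrm{ab}}$ while $K \supseteq [G,G] = \ker(G \to G^{\mathrm{ab}})$, we get $KH = G$, so $K$ acts transitively — contradicting irreducibility. Thus $Z(G)^0$ is a circle and $G^{\mathrm{ab}} \iso S^1$.

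With $G^{\mathrm{ab}} \iso S^1$ the subtorus $\bar S$ is trivial, so the connected group $H^0$ maps trivially to $G^{\mathrm{ab}}$ and therefore lies in $[G,G]$; consequently $(H \inter [G,G])^0 = H^0$. The remaining identity-component and cohomology statements I would extract from the single coset fibration $[G,G]/(H \inter [G,G]) \to G/H \to G^{\mathrm{ab}}/\mathrm{im}(H) \iso S^1$, whose fiber is connected because $[G,G]$ is. This fiber has finite fundamental group (an extension of the finite group $\pi_0(H \inter [G,G])$ by a quotient of the finite group $\pi_1[G,G]$), and the bundle's long exact sequence embeds this finite group into $\pi_1(G/H) \iso \Z$; torsion-freeness forces it to vanish. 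Vanishing of $\pi_0(H\inter[G,G])$ gives $H \inter [G,G] = H^0$, and vanishing of the fiber's $\pi_1$ gives that $[G,G]/H^0$ is simply connected. Feeding the same fibration into the rational homotopy sequence (the base $S^1$ contributes nothing above degree one) identifies $\pi_*([G,G]/H^0;\Q)$ with $\pi_{\geq 2}(G/H;\Q) = \pi_*(S^\ell;\Q)$, so $[G,G]/H^0$ is a simply-connected rational cohomology $S^\ell$.

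It remains to show $[G,G]$ is simple. I would first check that the pair $([G,G], H^0)$ is itself irreducible: a proper normal subproduct $G_I \subn [G,G]$ acting transitively on $[G,G]/H^0$ would make $G_I \cdot Z(G)^0$ a proper normal subgroup of $G$ with $(G_I Z(G)^0)H \supseteq (G_I H^0) Z(G)^0 = G$, contradicting irreducibility of \GH. Thus $([G,G], H^0)$ is an irreducible pair whose homogeneous space is a simply-connected rational sphere, and the classification of irreducible pairs with simply-connected rational-sphere quotient (\Cref{table:OddRationalSphere,table:EvenRationalSphere}) shows the acting group is simple, giving that $[G,G]$ is simple. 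I expect this last step to be the main obstacle: simplicity genuinely rests on the classification of transitive actions on (rational) spheres rather than on formal homotopy-theoretic bookkeeping, so the real work is the reduction to a bona fide irreducible sphere pair; by comparison the earlier steps are structure theory together with careful — but routine — $\pi_0$ and $\pi_1$ accounting against the torsion-free hypothesis.
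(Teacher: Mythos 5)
The paper contains no proof of this statement for you to be compared against: it is one of three results quoted verbatim from Bletz-Siebert's dissertation \cite{bletzsiebert2002} ``to give the flavor'' of the $S^1 \x S^\ell$ case, which the authors otherwise bypass entirely via \Cref{thm:isotf-circle-factor}. Your proposal therefore has to stand on its own. Its overall architecture --- the central-circle/irreducibility interplay, the fibration over $G^{\mathrm{ab}}/\mathrm{im}(H) \iso S^1$, and the reduction of simplicity of $[G,G]$ to the classification of irreducible pairs with simply-connected rational-sphere quotient --- is sound, but your very first step contains a genuine gap.

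You assert that $\pi_1(G/H)$ is abelian. That is automatic only when $H$ is connected (then $\pi_1(G/H)$ is a quotient of the abelian group $\pi_1(G)$, exactly the caveat the paper makes at the start of \Cref{sec:cover}); here $H$ may be disconnected, and homogeneous spaces of connected compact groups can have nonabelian fundamental group: $\SO(3)/(\Z/2 \x \Z/2)$ has $\pi_1 \iso Q_8$, the quaternion group of order $8$. Abelianness is in fact precisely where the torsion-freeness hypothesis must enter, and your argument never engages it at this step. The repair: the image $A$ of $\pi_1(G) \to \pi_1(G/H)$ is \emph{central} --- the action map $G \x G/H \to G/H$, restricted to $\pi_1(G) \x \pi_1(G/H)$, shows every element of $A$ commutes with every element of $\pi_1(G/H)$ --- and $A$ has finite index, with quotient $\pi_0(H)$; by Schur's theorem the commutator subgroup of $\pi_1(G/H)$ is then finite, hence trivial by torsion-freeness, so $\pi_1(G/H)$ is abelian, finitely generated, torsion-free, and of rational rank one by hypothesis, i.e.\ $\iso \Z$, whence $\pi_0(H)$ is cyclic as its quotient. (In the $Q_8$ example the commutator subgroup is the finite group $\{\pm 1\}$ --- torsion, which is exactly what the hypothesis forbids.) With this repaired, the rest of your argument goes through, with two smaller points worth flagging: passing from ``$[G,G]/H^0$ has the rational homotopy groups of $S^\ell$'' to ``rational cohomology of $S^\ell$'' requires, when $\ell$ is even, finite-dimensionality of the cohomology of a compact manifold (so that the degree-$(2\ell-1)$ generator of the minimal model must hit a nonzero multiple of the square of the degree-$\ell$ generator); and simplicity of $[G,G]$ does, as you acknowledge, genuinely rest on the classification in \Cref{table:EvenRationalSphere,table:OddRationalSphere} --- irreducibility is exactly what rules out examples like $\big(\SU(2) \x \SU(2), \D\SU(2)\big)$, so no purely formal argument is available there.
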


\subsection{Reduction to the case $\rk \pi_3 \leq 2$}\labell{sec:reductions}
For the general discussion of {\rspp}s $(G,H)$,
we require a result comparing the ranks and degrees of $G$ and $H$.

\bdefn
By the \defd{degrees} of a connected Lie group $G$
we mean the multiset $\defm{\deg \, G}$ of degrees $|z|$
of generators of the exterior algebra $\H(G;\Q) \iso \ext[\vec z]$. 
These can be enumerated as
the degrees of a homogeneous basis of the graded vector space
$\defm {P G}$ of primitives of the Hopf algebra $\H(G;\Q)$.
\edefn

\bprop[{\cite[\S9]{kamerich}}, 
		\emph{cf.} Kramer 
		{\cite[Thm.~3.11]{kramer2002homogeneous}}%
		]%
		\labell{thm:deg}
Let $(G,H)$ be a {\rspp}.
Then $\rk G = \rk H + 1$ and one of the following two 
possibilities holds:
\begin{enumerate}[label=\alph*)]
\item $n\neq m-1$ and $\deg G \,\less \deg H = \{n,2m-1\}$, 
  while $\deg H\, \less \deg G = \{m-1\}$;
\item $n = m-1 \in \deg G \inter \deg H$ and $\deg G = \deg H \amalg \{2m-1\}$.
\eenum
\eprop

We can briefly reproduce the proof in Kamerich's 
dissertation.


\begin{proof}
%
%
It is a result of Onishchik~\cite[Rmk., p.206]{onishchik}\cite[Thm.~1]{onishchik1963transitive}
following from uniqueness of the minimal model
that if homogeneous spaces $G/H$ and $G'/H'$ 
with $G$, $G'$, $H$, $H'$ compact, connected
have the same real homotopy type,
the quotients $p(G)/p(H)$ and $p(G')/p(H') \in \Q(t)$
of Poincar\'e polynomials are equal.
Taking $G' = \SO(n+1) \x \SO(m+1)$ and $H' = \SO(n) \x \SO(m)$,
we have 
\[
\frac{\prod_{j \in \deg G}(1+t^j)}{\prod_{k \in \deg H}(1+t^k)}
	= 
\frac{p(G)}{p(H)} 
	= 
\frac{p(G')}{p(H')} 
	= 
(1+t^n)\frac{1+t^{2m-1}}{1+t^{m-1}}
\mathrlap,
\]
or simply $1+t^{2m-1}$ if $n = m-1$, giving the result.
\end{proof}


\brmk\labell{rmk:correction}
While \Cref{thm:deg} rules out
potential {\rspp}s very efficiently,
it is not a sufficient condition.
For example, Kramer's lists~\cite[5.12. p.~60]{kramer2002homogeneous} 
include the pair $(G,H) = \big(\SO(8),\SO(3) \x \SO(5)\mnn\big)$,
which has 
\eqn{
	\deg G 			&= \{3,7,7,11\},\\
	\deg H 			&= \{3,3,7\},\\
	\deg G \,\mnn\less \deg H &= \{7,11\},\\
	\deg H \,\mnn\less \deg G &= \{3\}.
}
This numerical data satisfies the conclusion of \Cref{thm:deg} 
with $m  = 4$ and $n = 11$,
but the cohomology ring of the 
oriented real Grassmannian $\widetilde G_3(\R^8) = G/H$
is actually $\quotientmed{\Q[p_1]}{\mn(p_1)^3} \,\ox\, \ext[\eta]$
with $|p_1| = 4$ and $|\eta| = 7$;
the relevant arithmetic of degrees is that $11 = 3m - 1$, 
the differential $d_{12}$ of the degree-$11$ generator of $\H \SO(8)$
cancelling $p_1^3 \in H^{12}(BH)$ 
in the \SSS of $G \to G/H \to BH$,
while the fact $n = 7 = 2m - 1$ is a red herring.

We are thus forced to compute $\H(G/H)$ for all pairs \GH
in the existing classification
to verify that they truly belong.
Fortunately, we will find we only need to exclude a few cases.
\ermk

\Cref{thm:deg} enables us to make a useful reduction on $\rk G$.

\bprop
\labell{prop:Kramer-reduction}
Suppose $(G,H)$ is a {\rspp}
with $m,n\geq 2$.
Then there is a closed, connected, semisimple normal subgroup $\ol G \ideal G$ 
acting transitively on \revision{$G/H$}
and $\rk \pi_3 \ol G \leq 2$.
This $\ol G$ can be taken to act irreducibly.
\eprop
\begin{proof}
	This is Kramer's Proposition 3.14~\cite{kramer2002homogeneous},
	minus his hypothesis that $n>m$.
	It turns out this hypothesis is unnecessary:
	examining his proof, 
	he uses it only 
	in his observation that $\dim \ker(PG \to PH) = 2$,
	a tacit reference to his Theorem 3.11;
	but this is subsumed in Kamerich's \Cref{thm:deg} \cite{kamerich},
	which does not require the dimension hypothesis.
\end{proof}

\brmk 
Since $\ol G$ is semisimple, we have $\pi_1 \ol G$ finite and can replace $\ol G$ with its universal cover;
$\rk \pi_3 \ol G$ is just the number of simple factors of this cover. 
If $\rk \pi_3 \ol G = 2$, 
then $\ol G$ will be a virtual direct product of two simple groups.
\ermk
\subsection[Unexamined cases in the classification]{The case with $m>n\geq 3$ and the case with both $m,m'$ even}\labell{section:unexamined}
In this subsection, we show that when a homogeneous space has the rational homotopy of $S^n \x S^m$ with
$m$ even, $n$ odd,
and $m>n\geq 3$, or the rational homotopy of $S^m \x S^{m'}$ with both $m,m'$ even,
it is a virtual 
product of homogeneous rational cohomology spheres.

We continue to write $\defm{G^0}$ for the identity component of a topological group $G$.

\bthm \labell{thm:m>n}
Let $(G,H)$ be an irreducible {\rspp} 
with $m > n \geq 3$. 
Then $G = K_1 \. K_2$ is a virtual direct product of 
two simple subgroups
and
$H = (H\inter K_1)^0 \cdot (H \inter K_2)^0$ is also a virtual direct product. Hence $G/H$ is a virtual direct product of 
homogeneous rational cohomology spheres of dimensions $n$ and $m$.   
\ethm

This says, in essence, that this newly considered class of cases
contains nothing morally new. 
The proof will be given in several steps.

\blem\labell{lem:pi3-rank}
Under the assumptions of \Cref{thm:m>n}, both $G$ and $H$ are semisimple and $\rk \pi_3 H \leq \rk \pi_3 G \leq 2$.
\elem
\begin{proof}
By \Cref{prop:Kramer-reduction}, 
we know $G$ contains a connected, semisimple normal subgroup $\ol G \ideal G$ with $\rk \pi_3 \ol G \leq 2$ and acting transitively on $G/H$. 
Since we have assumed $G$ itself acts irreducibly, 
we must have $G = \ol G$;
hence $G$ is semisimple with $\rk \pi_3 G \leq 2$.

By assumption, $\pi_1(G/H) \ox \Q$ is trivial, 
so $\pi_1(G/H)$ is finite.
Let $\widetilde{G/H}$ be its universal cover,
$\widetilde G$ the universal cover of $G$,
and~$\widetilde H < \widetilde G$ 
the stabilizer of a point of $\smash{\widetilde{G/H}}$ lying over $1H \in G/H$.
Then~$\widetilde H$ is a finite-sheeted cover of $H$ as well.
The long exact homotopy sequence of the fibration
$\smash{\widetilde H \to \widetilde G \to \widetilde{G/H}}$
contains the subsequence
$
\pi_2(\widetilde{G/H})
	\to
\pi_1 \widetilde H
	 \to
\pi_1 \widetilde G = 0
,
$
implying $\pi_2(\widetilde{G/H})
\lt
\pi_1 \widetilde H$ is surjective.
As we have assumed $m > n \geq 3 > 2$,
we have $\pi_2(\widetilde{G/H}) \ox \Q \iso \pi_2(G/H) \ox \Q = 0$,
so $\pi_2(\widetilde{G/H})$ is finite.
But then by surjectivity, 
$\pi_1 \widetilde H$ 
and hence $\pi_1 H$ are finite as well, 
so that $H$ too is semisimple.

We also have the exact fragment
\quation{\labell{eq:LES}
	\overbrace{\!\pi_4 G}^{\mr{finite}} \!\lt 
	\pi_4 (G/H) \lt 
	\pi_3 H \lt 
	\pi_3 G \lt 
	\pi_3(G/H) \lt
	\!\!\overbrace{\pi_2 H}^0\!\!.
}
Now we show by contradiction that $\rk \pi_3 H \leq \rk \pi_3 G$.
Assume instead that $\rk \pi_3 H > \rk \pi_3 G$.
Then by exactness of \eqref{eq:LES} 
we have $\rk \pi_4(G/H) \geq 1$ and hence $m = 4$.
Since we have assumed $m > n \geq 3$, 
this forces $n = 3$ and hence $\rk \pi_3(G/H) = 1 = \rk \pi_4(G/H)$. 
But then by exactness of \eqref{eq:LES} again,
we have $\rk \pi_3 H = \rk \pi_3 G$,
contradicting our assumption.
\end{proof}

\begin{lemma}\labell{lem:noSimplePair}
Under the assumptions of \Cref{thm:m>n}, we have $\rk \pi_3 G = 2$.
\end{lemma}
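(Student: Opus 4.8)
The plan is to argue by contradiction, ruling out the only remaining possibility. By \Cref{lem:pi3-rank} we already know $\rk \pi_3 G \leq 2$, and since $G$ is a nontrivial semisimple group (it acts transitively on the positive-dimensional $G/H$), we have $\rk \pi_3 G \geq 1$; so it suffices to exclude $\rk \pi_3 G = 1$. Suppose then $\rk \pi_3 G = 1$, so that $G$ is simple. Since $\rk \pi_3 H \leq \rk \pi_3 G = 1$ and $H$ is semisimple, $H$ is either simple or trivial; but a compact simple group has $\H(-;\Q)$ an exterior algebra on odd-degree generators, which can never be isomorphic to $\H(S^n \x S^m)$ with $m$ even, so $H$ is nontrivial. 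Thus both $G$ and $H$ are simple, of ranks $r$ and $r-1$ by \Cref{thm:deg}.

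Next I would extract the low-degree information from the rational homotopy exact sequence of $H \to G \to G/H$. In degrees $3$ and $4$ it reads
\[
0 \lt \pi_4(G/H)\ox\Q \lt \pi_3 H \ox \Q \os{\phi}\lt \pi_3 G \ox \Q \lt \pi_3(G/H)\ox\Q \lt 0,
\]
with $\pi_3 H \ox \Q \iso \pi_3 G \ox \Q \iso \Q$ since $G$ and $H$ are simple. The map $\phi$ is multiplication by the Dynkin index of the inclusion $H \longinc G$, a positive integer, hence an isomorphism. Therefore $\pi_3(G/H)\ox\Q = \pi_4(G/H)\ox\Q = 0$, forcing $n \neq 3$ and $m \neq 4$; combined with $m > n \geq 3$ and the parities, this upgrades to $n \geq 5$ and $m \geq 6$.

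Now I would invoke \Cref{thm:deg} to pin the two degree multisets together: either $\deg G \less \deg H = \{n,2m-1\}$ and $\deg H \less \deg G = \{m-1\}$, or else $n = m-1$ and $\deg G = \deg H \amalg \{2m-1\}$. In either case $\deg G$ and $\deg H$ agree outside a bounded set, and since the degrees of the compact simple groups are completely known, only finitely many corank-one simple pairs $(G,H)$ can realize such a configuration. For each I would impose the extra numerical constraint coming from the even cohomology: because $G/H$ is rationally $S^n \x S^m$, the subalgebra $\Heven(G/H)$ is $\Q[b]/(b^2)$ with $|b| = m$, and by formality (\Cref{thm:formal}) the Cartan model realizes it as a complete-intersection quotient of $\H_H$ by $r-1$ of the transgressed relations, of total dimension $\prod_{i \neq i_0} d_i^G \big/ \prod_j d_j^H$, where $2d_i^G-1$ and $2d_j^H-1$ range over $\deg G$ and $\deg H$ and $n = 2d_{i_0}^G - 1$ is the degree of the surviving odd generator. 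Setting this equal to $2$ and using $|W| = \prod d_i$ yields the clean identity $|W_G| = (n+1)\,|W_H|$, which together with the degree data cuts the candidate list down drastically.

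The final, and hardest, step is to dispatch the surviving candidates directly. This is exactly where the numerical invariants are insufficient, the phenomenon flagged in \Cref{rmk:correction}: for each candidate one must inspect the actual ring, equivalently recompute $\pi_*(G/H)\ox\Q$ or identify the diffeomorphism type, to see whether the putative degree-$m$ even class really survives. The outcome to be established is that every such simple pair is either a single rational cohomology sphere (hence not a product at all, as for the block inclusions $\SU(r) \longinc \SU(r+1)$ giving $S^{2r+1}$) or genuinely a product with $n > m$; in no case does $m > n$ occur. This contradicts the standing hypothesis $m > n$, so $\rk \pi_3 G = 1$ is impossible and $\rk \pi_3 G = 2$, as claimed. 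I expect this concluding comparison against the classification of simple groups to be the main obstacle, precisely because it cannot be resolved by $\deg$ and $|W|$ alone.
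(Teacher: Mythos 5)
Your overall strategy coincides with the paper's: reduce to the case where $G$, and hence $H$, is simple, then rule out all corank-one simple pairs with $\H(G/H)\iso\H(S^n\x S^m)$, $m>n$, by comparison with the Killing--Cartan classification. Your preliminary reductions are sound (ruling out trivial $H$ is a point the paper leaves implicit), and the Dynkin-index argument giving $\pi_3(G/H;\Q)=\pi_4(G/H;\Q)=0$, hence $n\geq 5$, is a correct observation the paper does not state. One step, however, is vacuous: the ``extra numerical constraint'' $|W_G|=(n+1)\,|W_H|$ is already a formal consequence of \Cref{thm:deg}. In case (a) the degree multisets give $\prod_i d_i^G\big/\prod_j d_j^H=\tfrac{n+1}{2}\cdot m\big/\tfrac{m}{2}=n+1$, and in case (b) the ratio is $m=n+1$; so this identity eliminates no candidate that the degree criterion has not already eliminated.

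The genuine gap is that the decisive step --- the finite verification that no corank-one pair of simple groups satisfying the degree criterion actually has $\H(G/H)\iso\H(S^n\x S^m)$ with $m$ even, $n$ odd, and $m>n\geq 3$ --- is exactly the step you defer, announcing its expected outcome rather than establishing it. That verification \emph{is} the content of the paper's proof: it is made finite and tractable by observing that the generic degree gaps ($2$ for type $A_\ell$, $4$ for types $BC_\ell$ and $D_\ell$) together with the bound of three on $|(\deg G\less\deg H)\union(\deg H\less\deg G)|$ force $G$ and $H$ into the same or adjacent classical families for $\ell\geq 4$; that same-family pairs and the pairs $(D_\ell,BC_{\ell-1})$ yield rational cohomology spheres; that the pairs $(BC_\ell,D_{\ell-1})$ yield either $S^2\x S^7$ (so $m<n$) or products with $n=4\ell-5>m=2\ell$ in all but finitely many cases; and that the finitely many remaining exceptional cases are all rational spheres. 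Without carrying out this case analysis (or citing a classification that performs it), the contradiction with $m>n$ is asserted, not proved, so your write-up does not yet establish the lemma.
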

\begin{proof}
By \Cref{lem:pi3-rank}, we have $\rk \pi_3 G \leq 2$.
If we had $\rk \pi_3 G = 1$, or in other words if $G$ were simple,
then $H$ would also be simple by \Cref{lem:pi3-rank}.
But we are able to rule out all examples of pairs $(G,H)$
of simple groups such that $\H(G/H)$ is of the requisite form
$\H(S^n \x S^m)$
using the necessary condition of \Cref{thm:deg}
and working through the Killing--Cartan classification.
We have no particular insight as to why such examples cannot occur,
so such a proof is just a verification, 
either manual or (better) automated, and not particularly rewarding.
We list some simplifications that make the verification process finite:
\begin{itemize}
\item For each simple $G$, 
there are only finitely many simple $H$ with $\defm \ell \ceq \rk H = \rk G - 1$.

\item For each $\ell \geq 9$ there are only $3$ types of $G$ and $H$ to consider.

\item The gaps in degrees for Killing--Cartan types $BC_\ell$ and $D_\ell$ 
are generically $4$,
while the gaps for type $A_\ell$ are $2$.
Since \Cref{thm:deg} implies the cardinality of 
$(\deg G \less \deg H) \union (\deg H \less \deg G)$ is at most $3$,
examples with $G = A_{\ell+1}$ and $H \in \{BC_\ell,D_\ell\}$
or $G \in \{BC_{\ell+1},D_{\ell+1}\}$ and $H = A_\ell$ 
are impossible for $\ell \geq 4$.

\item The degrees for the cases when $G$ and $H$ both lie in the same one of 
the infinite families 
$A$, $BC$, and $D$ correspond to rational cohomology spheres.

\item For $G = D_\ell$ and $H = BC_{\ell-1}$,
one has $\deg G = \deg H \amalg \{2\ell-1\}$,
corresponding to a rational homotopy sphere for $H = B_{\ell-1}$
and not actually possible for $H = C_{\ell-1}$.

\item For $G = BC_{\ell}$ and $H = D_{\ell-1}$,
one has $\deg G \less \deg H = \{4\ell-1,4\ell-5\}$ 
and $\deg H \less \deg G = \{2\ell-1\}$,
unless some of these numbers are equal.
If that is the case, then 
$2m-1 = 4\ell-1$ and $n = m-1 = 4\ell-5 = 2\ell-1$, so that $\ell = 2$,
but the rational cohomology calculations for $\Sp(2)/\SO(2)$ and $\SO(5)/\SO(2)$
yield the cohomology of $S^2 \x S^7$ after all, contradicting
$m > n$.
If the numbers are distinct,
then $m = 2\ell$, so $n = 4\ell-5 > m$ in all but finitely many cases.

\item The finitely many cases where $\deg G = \deg H \dis \{2m-1\}$
and $G$ and $H$ do not lie in the same infinite family
all 
correspond to cases where $G/H$ 
is a rational cohomology $S^{2m-1}$.
\qedhere
\end{itemize}
\end{proof}
\begin{proof}[Proof of \Cref{thm:m>n}]
By \Cref{lem:pi3-rank,lem:noSimplePair}, we see $G$ and $H$ are both semisimple and that $\rk \pi_3 H \leq \rk \pi_3 G = 2$. 
Write $G = K_1 \. K_2$ with $K_j$ simple and normal in $G$,
and let $H_j$ be the identity component of $H \inter K_j$.
Each $H_j$ is normal in $H$, 
so there is a connected complement $H_0$ to $H_1 H_2$. 
As $\rk \pi_3 H \leq 2$, 
this implies at least one of the three $H_j$ must be trivial.

We claim the trivial factor must be $H_0$. Otherwise,
the trivial factor is one of $H_1$ or $H_2$, say the latter. 
The compositions of Lie algebra maps 
$\f h_0 \inc \fg \epi \fk_j$ 
must be injective for $j \in \{1,2\}$ lest $H_0$ lay in $K_1$ or $K_2$. 
Writing the corresponding lowercase Roman letters for ranks of Lie groups, 
we get $k_1 \geq h_0 + h_1$ and $k_2 \geq h_0$. 
As 
\[
k_1 + k_2 
=
g
=
h+1
= 
h_0 + h_1 + 1
\mathrlap,
\]
arithmetic then shows $k_2 = h_0 = 1$,
meaning $\fk_2 \iso \fh_0$ and $K_2 \iso H_0$ owing to semisimplicity.
But this would mean $K_1$ acted transitively on 
$
G/H 
= 
{K_1 H_0}\,/\,{H_1 H_0} 
,
$
violating our assumption that the $G$-action on $G/H$ was irreducible.
Thus $H_0$ is trivial. 

Since $H$ is assumed to be connected, 
we thus have $H = H_1 \. H_2$.
Writing $G \iso (K_1 \x K_2)/F$ for $F$ a finite central
subgroup, 
we see $H \iso (H_1 \x H_2)F/F$,
so $K_1/H_1 \x K_2/H_2 \lt G/H$
is a finite-sheeted cover with abelian fiber 
$F(H_1 \x H_2)/(H_1 \x H_2)$.
Since $F$ is central in $K_1 \x K_2$\revision,
the covering transformations are given by 
$(x_1,x_2)(H_1 \x H_2) \cdot f(H_1 \x H_2) 
= f\.(x_1,x_2)(H_1 \x H_2)$,
the left action of $f \in K_1 \x K_2$,
and hence 
are homotopic to the identity map since
$K_1 \x K_2$ is path-connected.
Thus the covering induces an isomorphism
\[\textstyle
\H(G/H) 
	\isoto 
\H\Big(\frac{K_1}{H_1} \x \frac{K_2}{H_2}\Big)\!{}^{\frac{F(H_1\x H_2)}{H_1 \x H_2}}
	= 
\H\Big(\frac{K_1}{H_1} \x \frac{K_2}{H_2}\Big)
\mathrlap.
\]
By the K\"unneth formula, $K_1/H_1$ and $K_2/H_2$
are rational cohomology spheres.
\end{proof}

\brmk
	Kamerich~\cite[\S13]{kamerich} conducts a similar analysis
	of ranks but with the stronger hypotheses 
	that $G$ is a product of two simple groups and $G/H$
	is homeomorphic to a product of two spheres,
	and without the hypothesis $m > n$.\footnote{\ 
	He coins the charming term \emph{auletic}
	for a pair leading to a space of the form $(K_1/H_1) \x (K_2/H_2)$,
	from the ancient Greek {\textgreek{αὐλός}}	(\emph{aulos}),
	a wind instrument with two pipes,
	one keyed by each hand.
	An online search for Kamerich finds in later life he was a teacher
	of mathematics 
	as well as a longtime bassoonist for the Arnhem Symphony Orchestra
	and a member of a local Renaissance music ensemble.
	A memorial for him in 2017 was titled ``His life was music.''
	}
	His analysis in the subsequent sections finds many 
	examples, all of the form $S^2 \x S^n$ for $n$ odd
	or $S^n \x S^{n'}$ for both $n$ and $n'$ odd.
\ermk

Next we consider the case of $m$, $m'$ both even.

\bthm\labell{thm:mm'}
Let $(G,H)$ be a pair of compact, connected Lie groups
such that $G$ acts effectively on $G/H$ and 
$G/H$ has the rational homotopy type 
of $S^m \x S^{m'}$
with $m,m'$ both even. 
Then $G/H$ is homeomorphic to a direct product of 
two homogeneous spheres of dimensions $m$ and $m'$.
\ethm
\begin{proof}
The Euler characteristic $\chi(G/H)$ is $4$,
so by a theorem of Wang~\cite[Thm.~I, p.~927]{wang1949}
on homogeneous spaces of positive Euler characteristic,
$G/H$ is 
homeomorphic to the direct product of homogeneous spaces
of compact simple groups.
There can \emph{a priori} be arbitrarily many such factors
with $\chi = 1$
and must also be either exactly two factors with $\chi = 2$
or one with $\chi = 4$.
We will rule out the former and latter possibilities.

Wang's proof descends from $G$ to its effective \revision{quotient}
$\ol G$ in $\Homeo G/H$,
which is a direct product of centerless simple groups $K_j$,
notes $\ol H$ is connected because $H$ is, and
argues $\ol H$ is the direct product of subgroups $H_j < K_j$,
which then are connected as well.
We cannot have any factor $K_j/H_j$ 
with $\chi(K_j/H_j) = 1$,
because $1 = \chi(K_j/H_j) = |W_{K_j}|/|W_{H_j}|$
would imply $K_j = H_j$.

 Thus if $K_j/H_j$ is a factor with $\chi = 4$,
then $K_j/H_j$ is actually all of $\ol G/\ol H$,
so $\ol G$ is simple.
We will show this too is impossible by
examining $\H(\ol G/\ol H) \iso \H(G/H)$
and showing it cannot be isomorphic to $\H(S^m \x S^{m'})$. 

Assume first $H$ is a maximal closed, connected subgroup of $K$.
Then by Borel and de Siebenthal's classification~\cite[p.~219]{boreldesiebenthal},
since $|W_{\ol G}|/|W_{\ol H}| = \chi(\ol G/\ol H) = 4$, 
the type of $(\bar\fg,\bar\fh)$ 
must be one of
$(A_3,A_2\x S^1)$,\, 
$(C_4,C_3 \x C_1)$, and
$(C_2,A_1\x S^1)$,
corresponding respectively to the homogeneous spaces 
$\CP^3$,\, 
$\HP^3$, and
$\widetilde{G}_2(\R^5)$ or $\Sp(2)/\big(\Sp(1)\+\U(1)\mnn\big)$.
But in each of these cases the cohomology ring is a truncated polynomial ring.

Now assume instead there is an intermediate closed, connected 
subgroup $\ol K$ between $\ol G$ and $\ol H$.
Then one must have 
$|W_{\ol G}|/|W_{\ol K}| = 2 = |W_{\ol K}|/|W_{\ol H}|$.
By Borel--de Siebenthal again, as Borel notes~\cite[p.~586]{borel1949remarks},
the pairs $(\ol G,\ol K)$ and $(\ol K,\ol H)$ must be of the types
$(B_r,D_r)$ or $\big(G_2,A_2\big)$, but $\ol K$ cannot be simultaneously
of types $D_2$ and $G_2$, nor simultaneously of types $A_2$ and $B_2$.

So we may conclude that $\ol G/\ol H$ is the product of 
homogeneous spaces $K_1/H_1$
and $K_2/H_2$ of simple groups, 
both factors of Euler characteristic two.
But Borel observes that pairs of this type give rise to honest spheres.
\end{proof}

\subsection{The classifications of irreducible actions}
Having shown that the only irreducible {\rspp}s
with even $m > n$ odd are covered by products of homogeneous
rational cohomology spheres,
it remains to evaluate and update the existing classifications
for $n > m$.

\subsubsection{Sphere pairs}
We first collect simply-connected irreducible rational sphere pairs in 
\Cref{table:EvenRationalSphere,table:OddRationalSphere};
notation is explained in \Cref{rmk:sphere-notation}.
The computations 
of $Z_G(H)^0$ and $H_S$ (as defined in \Cref{def:LH})
are not part of the standard table.
We tabulate them here and will use them in \Cref{sec:isotf}.
As we are only concerned with rational homotopy type, 
when $F$ is a finite subgroup of $H$ acting trivially on $G/H$,
we do not distinguish between $(G,H)$ and $(G/F,H/F)$
in our tabulation. 

\begin{table}
	\caption{Irreducible pairs \GH 
		with $\pi_1(G/H) = 0$ and $\H(G/H) \iso \H(S^{\mr{even}})$}%
	\labell{table:EvenRationalSphere}
	\begin{center}
		\begin{tabular}{ >{$}l<{$}|>{$}l<{$}|>{$}l<{$}|>{$}l<{$} }
			G & H & \dim G/H & G/H  \\
			\hline
			\SO(2k+1), \quad k\geq 1& \SO(2k) & 2k & S^{2k} \\
			\hline
			G_2 & \SU(3)& 6 & S^{6} \\
			\hline
			\Sp(2)& \Sp(1) \x \Sp(1)& 4&S^4 = \Spin(5)/\Spin(4) \\
		\end{tabular}
	\end{center}
\end{table}

\begin{table}
	\caption{Odd-dimensional 
		irreducible rational sphere pairs~%
		\cite[Table~2, p.~457]{onishchik1963transitive}%
		\cite[Table~10, p.~265]{onishchik}%
		\cite[Thm.~F.7.50-7.54, p.~195-196]{besse1978}%
		\cite[Ex.~B.7.13, p.~179]{Be}%
		\cite[p.~64-66]{kramer2002homogeneous}%
		\cite[Table~III, p.~154]{kapovitchziller2004}
		}
		\labell{table:OddRationalSphere}
		
	\begin{center}
		\resizebox{\columnwidth}{!}{
		\begin{tabular}{ >{$}l<{$} >{$}r<{$}|>{$}l<{$}|>{$}l<{$}|>{$}l<{$}|>{$}l<{$}|>{$}l<{$} }
			G && H & \Cen_G(H)^0 & H_S & \dim G/H & G/H  \\
			\hline
			\SU(k+1), & k\geq 2 & \SU(k)& \U(1) & H
				& 2k+1 & S^{2k+1}\\
			\SU(4) && \Sp(2) & \trivialgroup & \SU(2)\+\SU(2) & 5 & S^5 =\Spin(6)/\Spin(5)\\
			\SU(3) && \SO(3)_4& \trivialgroup & \SU(2) & 5 & \mbox{Wu manifold} \\
			\hline
			\SO(2k+1),&  k\geq 3 & \SO(2k-1) & \SO(2) & H
				& 4k-1 &V_2(\R^{2k+1})\\
			\Spin(9) && \Spin(7)& \trivialgroup & \SU(4) & 15 &S^{15}\\
			\Spin(7) && G_2 & \trivialgroup & \SU(3) & 7 &S^7\\
			\Spin(5)  \mathrlap{\,\,\mn \iso \Sp(2)}
			&& \SU(2) \iso \Sp(1) & \SU(2)^{\perp} & H
	& 7 &S^7 = \Sp(2)/\Sp(1)\\
\comment{\SO(5) && \SO(3)_2 & \SO(2) & H
			& 7 & V_2(\R^5) \iso \Sp(2)/\D\Sp(1)\\
			}
			\hline
			\Sp(k),&  k\geq 1 & \Sp(k-1)& \Sp(1) & H
				& 4k-1 &S^{4k-1} \\
			\Sp(2) \mathrlap{\,\,\mn \iso \Spin(5)}&& \SU(2)_{10}:{}^{\HH}\rho_{3\lambda_1} & \trivialgroup & i_{3,1} \U(1)  & 7 & \mbox{Berger $7$-space}\\
			\Sp(2)&&\big(\D \Sp(1)\mnn\big)_2 \,(\text{or } \SU(2)_2) & \SO(2)& H
				& 7 & V_2(\R^5)\iso \SO(5)/\SO(3)\\
			\hline
			\SO(2k),& k\geq 3 & \SO(2k-1) & \trivialgroup & \SO(2k-2) & 2k-1 &S^{2k-1}\\
			\hline
			G_2 && \SU(2)_1	\phantom{{}_{8}}:
			{}^{\R}\rho_{\lambda_1}& \SU(2)_{3} & H
				& 11 & V_2(\R^{7})\\
			G_2 && \SU(2)_{3}	\phantom{{}_{8}}:
			{}^{\R}\rho_{\lambda_1} + {}^{\R}\rho_{2\lambda_1}& \SU(2)_1& H
				& 11 & \\
			G_2 && \SO(3)_{4}	\phantom{{}_{8}}:
			2\cdot{}^{\R}\rho_{2\lambda_1}& \trivialgroup & i_{1,1} \U(1)  & 11 & \\
			G_2 && \SO(3)_{28}:{}^{\R}\rho_{6\lambda_1}& \trivialgroup & i_{5,1} \U(1)  & 11 & 
		\end{tabular}}
	\end{center}
\end{table}

\begin{table}
	\caption{Odd-dimensional 
		virtually effective but reducible rational sphere pairs~%
	}
	\labell{table:OddRationalSphereEffective}
	
	\begin{center}
			\begin{tabular}{ >{$}l<{$} >{$}r<{$}|>{$}l<{$}|>{$}l<{$}|>{$}l<{$}}
				G && H & \dim G/H & G/H  \\
				\hline
				\U(k+1), & k\geq 2 & \U(k)
				& 2k+1 & S^{2k+1}\\
				\hline
				\hline
				\SO(2k+1)\x \SO(2)&  k\geq 3 & \SO(2k-1)\D\SO(2)
				& 4k-1 &V_2(\R^{2k+1})\\
			\comment{\SO(5) \x \SO(2) && \SO(3)_2\D\SO(2)
				& 7 & V_2(\R^{5}) \iso \Sp(2)/\D\Sp(1)\\
				}
				\hline
				\hline
				\Sp(k) \x \Sp(1),&  k\geq 1 & \Sp(k-1)\D\Sp(1)
				& 4k-1 &S^{4k-1} \\
				\Sp(k) \x \U(1),&  k\geq 1 & \Sp(k-1)\D\U(1)
				& 4k-1 &S^{4k-1} \\
				\hline
				\Sp(2)\x \SO(2)&&\SU(2)_2 \D\SO(2) 
				& 7 & V_2(\R^5)\iso \SO(5)/\SO(3)\\
				\hline
				\hline
				G_2 \x \SU(2)&& \SU(2)_1 \D\SU(2)
				& 11 & V_2(\R^{7})\\
				G_2 \x \U(1)&& \SU(2)_1	\D\U(1)
				& 11 & V_2(\R^{7})\\
				\hline
				G_2 \x \SU(2) && \SU(2)_{3} \D\SU(2)
				& 11 & \\
				G_2 \x \U(1) && \SU(2)_{3} \D\U(1)
				& 11 & \\
		\end{tabular}
	\end{center}
\end{table}

\brmk[Notation for \Cref{table:OddRationalSphere}]\labell{rmk:table-explanation0}%
\labell{rmk:sphere-notation}

\ \\

\vspace{-1.5em}

\begin{itemize}
\item The block-diagonal product of two matrix groups is denoted with a ``$\defm\+$,''
	according with the standard notation $h_1 \+ h_2\: V_1 \+ V_2 \lt V_1 \+ V_2$, 
	%
and the trivial $1 \x 1$ block-diagonal subgroup 
	of a matrix group is denoted $\defm{[1]}$.
	Thus, for example, $\Sp(1) \+ [1]$ is the subgroup of $\Sp(2)$
	with elements $\mr{diag}(q,1)$ for $q \in \Sp(1)$.
	
\item For rational cohomology spheres in the list which are not spheres,
the symbols after the colon denote representations
named under the conventions of Chapter 4 
of Kramer's \emph{Habilitationsschrift}~\cite[Ch.~4]{kramer2002homogeneous},
identifying specific subgroups $H$ of $G$ isomorphic to the group preceding the colon. 
We are able to mostly avoid engaging directly with these representations,
but list them for completeness. 
The subscript attached to a subgroup $H$ denotes the \defd{Dynkin index} 
of $H$ in $G$, which can be identified with the order of the finite group 
$\pi_3(G/H)$~\cite[p.~257]{onishchik}. 
The Dynkin indices in the above table are taken from 
Onishchik~\cite[Table~2, p.~457]{onishchik1963transitive} 
and date back to Dynkin's work~\cite{dynkin1952}. 
Those $H$ without subscripts 
(except for $\Sp(0) < \Sp(1)$, which has no Dynkin index) 
have Dynkin index $1$ in the ambient group. 
The subscript ``$2$'' of $G_2$, though, is just its rank;
its Dynkin index in $\Spin(7)$ is~$1$.

\item
For the pair $\big(\Spin(5),\SU(2)\mnn\big)$,
the group denoted $\defm{\SU(2)^\perp}$
is the copy of $\SU(2)$ such that $\SU(2)\.\SU(2)^\perp = \Spin(4) < \Spin(5)$,
or equivalently $\Sp(1)\.\Sp(1)^\perp = \Sp(1)\+\Sp(1) < \Sp(2)$.

\item
The map $\defm{i_{p,q}}\: \U(1) \lt T^2$,
for $p$, $q \in \Z$,
denotes a parameterization
by $t \lmt \sigma_1(t^p)\sigma_2(t^q)$
of a circle in a maximal torus 
$T^2 = \sigma_1 \U(1) \. \sigma_2 \U(1)$ of $G$.
We may as well assume $p$ and $q$ are coprime.
In~$\SU(3)$, we take $T^2 = \mathrm{S}\big(\U(1)^3\big)$
the diagonal matrices of determinant~$1$,
and in~$\Sp(2)$, we take $T^2=\U(1)\+\U(1) <\Sp(1)\+\Sp(1)<\Sp(2)$,
with the standard parameterizations.
For~$G_2$, we will explain our choice of $\sigma_1$ and $\sigma_2$
in \Cref{rmk:sphere-discussion};
we will have $\s_1\U(1) \inter \s_2\U(2) \iso \{\pm 1\}$,
so that $i_{p,q}$ is not injective, 
but this is fine as we are only after 
a systematic way of naming the image circles $i_{p,q} \U(1)$.

Note that depending on the Weyl group of $G$, different pairs $(p,q)$
can correspond to conjugate subgroups
and hence result in equivalent pairs $(G,H)$. 
For example, two circular groups $i_{p,q}\U(1)$ and $i_{p',q'}\U(1)$ 
in $\Sp(2)$ are conjugate 
if and only if their ordered pairs of weights $(p,q)$ and $(p',q')$ 
lie in the same orbit of the Weyl group action on the weight lattice,
\emph{i.e.},
if the sets $\{p',q'\}$ and $\{\pm p,\pm q\}$ are equal.
\end{itemize}
\ermk

\begin{discussion}[Discussion of examples]\labell{rmk:sphere-discussion}
	
\ \\

\vspace{-1.5em}

\begin{enumerate}[label=(\alph*)]
\item\labell{rmk:SO(3)4}
The standard embedding $\SO(3) \leq \SU(3)$ has Dynkin index $4$ because 
the Dynkin index of the composite $\SO(3) \inc \SU(3) \inc G_2$ 
is the product of the Dynkin indices of the two intermediate embeddings~\cite[Prop.~9, p.~58]{onishchik},
and the index of $\SU(3)$ in $G_2$ is $1$ since $\pi_3(S^6)  = 0$,
while the index in $G_2$ of $\SO(3) < \SU(3)$
(which is not a maximal proper, closed, connected subgroup of $G_2$) 
must be $4$ because the $\SO(3)$ subgroups of $G_2$
all have index $4$ or $28$,
and those of index $28$ are maximal in $G_2$ (see \ref{rmk:G2}).\footnote{\ 
A proof not involving knowledge of $G_2$
is possible by computing that the cohomology of the Wu manifold
$W = \SU(3)/\SO(3)$ is the sum of $H^0(W) \iso \Z \iso H^5(W)$
and $H^3(W) \iso \Z/2 \iso H^2(W;\Z/2)$
then taking the homotopy fiber $F$ of a representing map $W \lt K(\Z/2,2)$
to kill $\pi_2(W) = \Z/2$
and finding the order of $H^4(F) \iso H_3(F) \iso \pi_3(F) \iso \pi_3(W)$ 
through the Serre spectral sequence of $K(\Z/2,1) \to F \to W$. 
}

\item\labell{rmk:S15}
The embedding $\Spin(7) \longmono \Spin(9)$
is not the standard one covering $\SO(7) \+ [1]^{\oplus 2} \longinc \SO(9)$,
but the composition of a standard embedding $\Spin(8) \longinc \Spin(9)$
with a lift $\defm\iota\: \Spin(7) \lt \Spin(8)$
of one of the (faithful) half-spin representations of $\Spin(7)$ on $\R^8$.
We will encounter~$\iota$ again in \Cref{rmk:example-discussion}\ref{rmk:S6S7}.

\item\labell{rmk:diagonalSp(1)}
The standard embedding $\defm{\SO(3)_2} = \SO(3) \+ [1]^{\oplus 2}$ 
of $\SO(3)$ in $\SO(5)$ has Dynkin index $2$ 
because the real Stiefel manifold $V = V_2(\R^5) = \SO(5)/\SO(3)$
has $\pi_3 \iso \Z/2$~\cite[25.6, p.~132]{steenrod1951}. 
This manifold is the total space of a bundle
$\SO(4)/\SO(3) \to \SO(5)/\SO(3) \to \SO(5)/\SO(4)$,
which Kapovitch--Ziller~\cite[p.~155]{kapovitchziller2004}
observe is diffeomorphic to 
$\Sp(1) \to \Sp(2)/\D\Sp(1) \to \HP^1$,
the unit tangent bundle of the quaternionic projective line. 
But $\D\Sp(1)$ is conjugate in $\Sp(2)$ 
to the standard subgroup $\defm{\SU(2)_2}$~\cite[2.14, p.~23]{mimuratoda}, 
so $\Sp(2)/\D\Sp(1) \homeo \Sp(2)/\SU(2)$ 
and both subgroups have Dynkin index $2$ in $\Sp(2)$ just as $\SO(3)$ does in $\SO(5)$.

\item\labell{rmk:SU(2)10}
The irreducible representation $\defm{{}^\Quat\rho_{3\lambda_1}}$ 
of $\SU(2) \iso \Sp(1)$
(in Kramer's notation~\cite[Ch.~4]{kramer2002homogeneous})
is of complex dimension $4$, 
hence of quaternionic dimension $2$, 
and induces a nonstandard embedding 
$\defm{\SU(2)_{10}}$ of $\SU(2)$ in $\Sp(2)$. 
Berger~\cite[p.~237--239]{berger1961} 
studied the topological and geometric properties of the quotient 
$\Sp(2)/\,\mn\SU(2)_{10}
$,
now called the \emph{Berger $7$-space},
which also appears in Onishchik's work~\cite[p.~457]{onishchik1963transitive} as $\SO(5)/\,\mn\defm{\SO(3)_{10}}$,
where the denominator subgroup is the nonstandard $\SO(3)$ subgroup of 
Dynkin index $10$ in $\SO(5)$ 
arising from the representation of $\SO(3)$ 
on the $5$-dimensional space $\R\{xy,yz,zx,x^2-y^2,y^2-z^2\}$
of homogeneous quadratic polynomials on $\R^3 = \R\{x,y,z\}$.
This representation is $\defm{{}^{\R}\rho_{4\lambda_1}}$ in Kramer's notation. 	
According to Berger~\cite[p.~237]{berger1961},
one maximal torus of the embedded subgroup $\SU(2)_{10}$ in $\Sp(2)$
is the diagonal circle $i_{3,1}\U(1) < \U(1) \+ \U(1) 
< \Sp(2)$.

\item\labell{rmk:G2}
 According to Dynkin~\cite[p.~411]{dynkin1952}, 
there are four conjugacy  classes of $A_1$ subgroups of $G_2$,
to wit, $\defm{\SU(2)_1}$, $\defm{\SU(2)_3}$,
$\defm{\SO(3)_4}$, and $\defm{\SO(3)_{28}}$,
of which only the first two are regular.
Let $\alpha$ and $\beta$ 
respectively be the short and the long simple roots of $G_2$
with respect to a fixed maximal torus,
so that $||\alpha||^2=1$, $||\beta||^2=3$, and $-B(\alpha,\beta)=-3/2$.
Then $\g = 3\a+2\b$ is another long root
and is orthogonal to $\a$,
so that $h_\a$ and $h_\g$ give an orthogonal basis of $\ft^2$.
According to Mayanskiy~\cite[p.~4]{mayanskiy2016}, 
any coroot corresponding to a long
root (such as $h_\g$) is tangent to the maximal torus
of an $\SU(2)_1$ subgroup of $G_2$ 
while any corresponding to a short root (such as $h_\a$)
is tangent to that of an $\SU(2)_3$ subgroup.
Mayanskiy~\cite[p.~17]{mayanskiy2016}
also notes that $\SO(3)_4$ and $\SO(3)_{28}$ subgroups of $G_2$
can be found with Cartan algebras respectively spanned by $2h_{3\alpha+\beta}$
and $14 h_{9\alpha+5\beta}$.
By pairing with $2\beta+3\alpha$ and $\alpha$,
we see these vectors are respectively
$h_{2\beta+3\alpha}+h_{\alpha}$
and
$5h_{2\beta+3\alpha}+h_{\alpha}$.
Dynkin shows these representative $\SU(2)_1$ and $\SU(2)_3$
subgroups centralize one another,
yielding a virtual direct product $\SO(4)$ 
which is a maximal proper closed subgroup of $G_2$
and can be chosen to have $\alpha$ and
$2\beta+3\alpha$ as its simple roots.
If we take $T^2$ to be the product $\U(1)\. \U(1) < \SU(2)_1 \. \SU(2)_3 = \SO(4)$,
then the maximal tori of $\SU(2)_1$, $\SU(2)_3$, $\SO(3)_4$, $\SO(3)_{28}$ are respectively realized as $i_{1,0}  \U(1)$, $i_{0,1}  \U(1)$, $i_{1,1} \U(1)$, $i_{5,1}\U(1)$.
\end{enumerate}
\end{discussion}

\brmk[History]
The list of pairs resulting in actual spheres is a classical
result due to Montgomery--Samelson~\cite{montgomerysamelson1943} 
and Borel~\cite[Thm.~3, p.~486]{borel1949remarks}\cite[Thm.~3]{borelCR1950Cplan}.
Borel also showed in 1953 that a simply connected integral homology sphere is a sphere~\cite[(4.61), p.~455]{borel1953bouts},
and Bredon~\cite{bredon1961homogeneous} showed in 1961 that the only integral homology sphere that is not a sphere
is the Poincar\'e homology sphere $\SO(3)/(\mbox{icosahedral group})$.
Matsushima~\cite{matsushima1951type} showed that 
the infinite series in \Cref{table:OddRationalSphere}
were the only odd-dimensional rational homology spheres 
$G/H$ with (perforce simple) $G$ acting irreducibly and virtually effectively,
up to possible low-dimensional exceptions,
and that $G_2$ was the only possible exceptional $G$ 
for $H$ of type $A$ or~$C$; 
Montgomery and Samelson had already done this 
for~$B$ and~$D$.
\ermk

\begin{proof}[Proof of centralizers and maximal regular subgroups for \Cref{table:OddRationalSphere}]

In most of these examples, $H$ is regular,
which by \Cref{thm:isotropyOddSphere} implies it is the maximal
closed, connected subgroup $H_S$ containing $S$,
as defined in \Cref{def:LH}.
We can usually verify this by checking $\rk Z_G(H)H/H = 1$
using \Cref{thm:enlargement-corank1}.
Then $Z_G(H)^0 H/H$ is either isomorphic to $S^1$ or a group of type $A_1$.
When it is $S^1$,
it is often simple to verify this directly,
but it is simpler to note that were it of type $A_1$,
then $G$ would admit a maximal-rank subgroup $H \x A_1$,
which can be ruled out by consultation with Borel--de Siebenthal~\cite[p.~219]{boreldesiebenthal},
and we do not spell out this verification.
When the centralizer really is of type $A_1$,
our approach is \emph{ad hoc}.

\bitem
\item
For $\big(\SU(k+1),\SU(k)\+[1]\big)$, 
the circle $\big\{\!\diag(z,\ldots,z,z^{-k})\big\}$ centralizes $H$,
so $H = H_S$.
\item
  For $\big(\SO(2k+1),\SO(2k-1) \+ [1]^{\oplus 2}\big)$,
the circle $[1]^{\oplus 2k-1} \+ \SO(2)$ centralizes $H$, so $H = H_S$.
\item
  For $\big(\Sp(k),\Sp(k-1)\+[1]\big)$,
  the $A_1$ subgroup $[1]^{\oplus k} \+ \Sp(1)$ centralizes $H$, so $H = H_S$.
\item
	For $\big(\Sp(2),\SU(2)\big)$, the diagonal $\D\U(1)$ centralizes $H$,
	so $H = H_S$.
\item
  For $\big(G_2,\SU(2)\big)$, the $\SU(2)$ can be of index $1$ or $3$,
  and each centralizes the other.

\item
For $\big(G_2,\SO(3)\big)$,
where $H = \SO(3)$ is either $\SO(3)_4$ or $\SO(3)_{28}$,
it is known~\cite[Prop.~4.1]{mayanskiy2016}
that the complement to $\f{so}(3)_\C$ 
in the adjoint representation of $\SO(3)$ on $(\f{g}_2)_\C$
is a sum of nontrivial irreducible representations,
so $Z_G(H)^0 = 1$,
and hence $H$ is not regular.
If $S$ is a maximal torus of $H$, then
it is centralized by a maximal torus $T < G_2$ containing it,
and so regular, and if $H_S > S$,
then $H_S$ is an $A_1$ subgroup;
but we have seen in \Cref{rmk:sphere-discussion}\ref{rmk:G2}
that up to conjugacy there are only the four such subgroups,
that only the $\SU(2)$ subgroups are regular,
and that these $A_1$ groups have distinct maximal tori,
so $H_S = S$.

\item
	For $\big(\SU(4),\Sp(2)\big)$,
	one sees manually the centralizer is $\D\{\pm 1\}$,
	so $H$ is not regular.
	A maximal torus of $\Sp(2)$ is given by 
	$\big\{\!\diag(z,z\-,w,w\-):w,z \in \U(1)\big\}$,
	which is also maximal in $H' = \SU(2) \+ \SU(2)$,
	and $H'$ is centralized by the circle
	$\big\{\!\diag(z,z\-,z,z\-):z \in \U(1)\big\}$,
	so it is regular.
	In fact, $(G,H')$ is a \rspp.
	To see $H'$ is maximal regular,
	note that by \Cref{thm:extend-H},
	if we had $H_S > H'$,
	then $(G,H_S)$ would be a \rsp,
	but consulting \Cref{table:OddRationalSphere},
	there are no candidates.
\item
	For $\big(\SU(3),\SO(3)\big)$,
	one computes directly that the centralizer is trivial.
	The subgroup $S = \SO(2) \+ [1]$
	is a maximal torus, which is contained in $H' = \SU(2)\+[1]$.
	The circle $\big\{\!\diag(z,z,z^{-2})\big\}$
	centralizes $H'$, which thus is regular,
	and $(G,H')$ is a \rsp,
	so by \Cref{thm:isotropyOddSphere},
	$H' = H_S$.
\item
	In $\big(\Spin(9),\Spin(7)\big)$,
	as we have discussed in \Cref{rmk:sphere-discussion}\ref{rmk:S15},
	the embedding of $\Spin(7)$ in $\Spin(8) < \Spin(9)$
	is nonstandard,
	and hence is not centralized by the
	expected $\Spin(2)$.
	Consulting the (yet to come) \Cref{rmk:example-discussion}\ref{rmk:S6S7},
	we see $\Spin(7)$ contains an $\SU(4)$
	subgroup~$H'$ sharing its maximal torus
	and such that $(G,H')$ is a \rspp.
	The double-covering $\Spin(8) \lt \SO(8)$ takes
	$H'$ isomorphically to the standard $\SU(4) < \U(4) < \SO(8)$, which is centralized by $\D_4\U(1)$,
	and this circle is double-covered by a circle in 
	$\Spin(8)$ centralizing $H'$, which thus is regular.
	\comment{If it were centralized by an $A_1$ subgroup of $G$,
	then $\Spin(9)$ would contain a $B_3 \x A_1$
	subgroup, which it does not by 
	Borel--de Siebenthal.
	}
	If we had $H_S > H'$,
	then by \Cref{thm:extend-H},
	$(G,H_S)$ would be a \rsp,
	but consulting \Cref{table:OddRationalSphere},
	there are no candidates.

\item
	For $\big(\Spin(7),G_2\big)$,
	the centralizer is $Z(G)$ by Borel--de Siebenthal.
	But (see \Cref{rmk:example-discussion}\ref{rmk:S6S7}),
	$G_2$ shares a maximal torus with a 
	standardly embedded $\SU(3)$ subgroup $H'$
	of the copy of $\SU(4)$ in $\Spin(7)$,
	and $\SU(3)$ is centralized
	within $\SU(4)$ by $\big\{\!\diag(z,z,z,z^{-3})\big\}$,
	and hence is regular.
	To see $H' = H_S$,
	again note that $(G,H')$ is a \rspp
	but $(G,K)$ is not a \rsp for any $K > H'$.

\item
	For $\big(\Sp(2),\SU(2)_{10}\big)$,
	note that the maximal torus $S = i_{3,1}\U(1)$ of $\SU(2)_{10}$
	is centralized by the diagonal maximal torus,
	hence is regular. Since $H/S \homeo S^2$,
	the Serre spectral sequence of
	$H/S\rightarrow G/S \rightarrow G/H$ collapses,
	so $(G,S)$ is a \rspp.
	By \Cref{thm:isotropyOddSphere}, 
	either $H_S = S$
	or $(G,H_S)$ is a \rsp,
	but there are no possibilities for $H_S > S$
	in \Cref{table:OddRationalSphere}. Hence, we have $H_S = S$.

\item
	For $\big(\SO(2k),\SO(2k-1)\big)$,
	the centralizer is $\pm[1]^{\oplus 2k}$,
	so $H$ is not regular, but $H$
	shares the maximal torus $S = \SO(2)^{\oplus k-1}$
	with $H' = \SO(2k-2)$
	and $(G,H')$ is a \rspp
	with $H'$ centralized by $[1]^{\oplus 2k-2} \+ \SO(2)$.
	Again $H' = H_S$ by lack of 
	candidates $K > H'$ with $(G,K)$ a \rsp.
	\qedhere
\eitem
\end{proof}

\subsubsection{Sphere product pairs}\labell{sec:sphere-product-class}
Our \Cref{thm:m>n} 
shows the existing
classifications of Kramer~\cite[Thm.~3.15, Chs.~5--6]{kramer2002homogeneous} for $n>m\geq 4$
and 
Wolfrom~\cite[Thm.~2.1]{wolfrom2002} for $n>m=2$ 
should contain all simply-connected irreducible cases.\footnote{\ 
	Note that we must have $n, m\geq 2$ 
	due to the simple-connectivity hypothesis.}
As mentioned earlier, this is nearly but not entirely true, 
since we must re-analyze the candidates 
from these lists to make
sure they have the correct cohomology, 
as explained in \Cref{rmk:correction},
and since there are a few omitted cases.
Kamerich, in classifying homogeneous products of two spheres, 
first lists all the possible pairs of Lie algebras satisfying
the condition of~\Cref{thm:deg}
as candidates~\cite[Table~1, p.~55--60]{kamerich}.
His list includes all the examples in Kramer and Wolfrom's lists,
and after extensive checking, we are confident his list is complete.
In this subsection we justify this conclusion,
which leads to 
\Cref{table:Kramer+Wolfrom}.

After establishing notation, we discuss how our table 
and the work of previous authors 
relate to each other in \Cref{rmk:comparison-Kamerich}
and \Cref{rmk:comparison-Kramer},
elaborate on several cases in \Cref{rmk:example-discussion},
and then finally prove that the candidates do have the correct cohomology
in several representative cases,
meant to convey the flavor of this somewhat lengthy verification process.

Although $Z_G(H)^0$ and $H_S$ appear in \Cref{table:main},
we will only need them later in \Cref{sec:isotf} to
determine \isotfity,
so we will defer their calculation until then.

\begin{notation}[for \Cref{table:Kramer+Wolfrom}]\labell{rmk:table-explanation}
This discussion will strictly cover notation \emph{per se};
\Cref{rmk:example-discussion} will describe the less obvious embeddings of $H$ in $G$.

\begin{itemize}
\item
 In the first four lines of the table, 
 if $Z_G\big(i_{p,q}\U(1)\mnn\big){}^0$ contains an $A_1$ subgroup,
then its complexified Lie algebra $\fa_1 \otimes \C = \C\{e_\b,e_{-\b},h_\b\}$ 
is centralized by $\f h = \f s = \R u$,
so that~$\b(u) = 0$ and 
the hence the tangent line $\R h_\b/i < \ft^2$ 
to its maximal torus is $B$-orthogonal to $\f s$.
This maximal torus will be $i_{p^*,q^*}\U(1)$,
for a pair $(p^*,q^*) \in \Z^2$ 
we will determine in the proof of \Cref{thm:isotfclassification}.
We then write $\defm{(A_1)_{p^*,q^*}}$
for this $A_1$ subgroup. 
In the case, $p\.q=1,-2$, 
we have written $\SU(2)_{p+2q,-q-2p} = \SU(2)_{\pm 3,\mp 3} = \SU(2)_{\pm 1,\mp 1}$
and $\SU(2)_{\pm 3,0} = \SU(2)_{\pm 1,0}$
and $\SU(2)_{0,\pm 3} = \SU(2)_{0,\pm 1}$. 
\textbf{In all these entries, this classification
	is only unique up to the assumption
	$\fs\less\{0\}$ meets the fundamental closed Weyl chamber.}
	The conditions on $p$ and $q$ necessary
	to ensure this are left to the reader.
\item
	In the fourth line of the table, 
	the subpair $(\defm{K_1},\defm{H_1})$ 
	can be any pair from \Cref{table:OddRationalSphere} 
	such that $\Cen_{K_1}(H_1)^0\neq \trivialgroup$
and the embedding $i_{p,q}\:\U(1)\longrightarrow T^2$ is as defined in \Cref{rmk:table-explanation0}
for $T^2 \iso \U(1) \x \U(1)$ the product of a maximal torus of 
$\Cen_{K_1}(H_1)$ and a maximal torus of $\Sp(1)$.

\item In the fifth line of the table, 
the pairs $(\defm{K_j},\defm{H_j})$ 
for $j \in \{1,2\}$ 
are respectively drawn from 
\Cref{table:OddRationalSphere} and 
\Cref{table:EvenRationalSphere}.

	\item The notation $\defm{\widetilde G_k(\C^n)}$
	reflects that the space
	can be seen as the total space of a bundle over
	the complex Grassmannian
	$G_k(\C^n) = \SU(n)/ \mr{S}\big(\U(k) \+ \U(n-k)\mnn\big)$
	with fiber the circle
	$\mr{S}\big(\U(k) \+ \U(n-k)\mnn\big) / \big(\SU(k)\+\SU(n-k)\mnn\big)$.
	We might think of this as an \emph{oriented complex Grassmannian}
	by analogy with the oriented real Grassmannians.

\end{itemize}
\end{notation}

\begin{discussion}[Comparison with Kamerich]\labell{rmk:comparison-Kamerich}
	Recall that Kamerich 
	wants to determine pairs $(G,H)$
	with $G/H$ homeomorphic to a product of two spheres,
	considering all parities of dimensions for the two potential spheres.
	He rules out 
	simple groups $G$ acting irreducibly on homogeneous spaces $G/H$ 
	homeomorphic
	to $S^1 \x S^n$ for all $n$,
	to $S^2 \x S^m$ for $m \geq 2$ even~\cite[\S10]{kamerich},
	through an analysis that shows there are no possibilities rationally,
	and to $S^2 \x S^n$ for $n \geq 3$ odd,
	through an analysis that leaves 
	$G \in \big\{\SU(3),\Sp(2),G_2\big\}$
	as the only possibilities rationally.\footnote{\ 
		This argument, as written, would incorrectly rule out $\SU(3)$,
		on considering 
		the long exact homotopy sequence of $H \to G \to G/H$,
		but this is due to the claim that $\pi_4(S^2) \iso \Z$
		(probably a typo),
		which does not affect his argument since he only 	
		needs that this group is nonzero;
		of course $\pi_4(S^2) \iso \Z/2$.
	}
	
	He applies \Cref{thm:deg}
	to determine corresponding Lie algebra pairs
	$(\fg,\fh)$ 
	with $\fg$ simple and $\fh$ semisimple
	satisfying the degree criterion,
	excluding cases where the Dynkin index is not $1$
	and (1) $\fh$ is simple
	and $m,n \geq 5$ or
	(2) $\fg$ is exceptional,
	since $\pi_3(G/H) \neq 0$
	is not consistent with a product of spheres
	unless one of the factors is $S^2$ or $S^3$.
	We briefly run through the cases from his resulting
	Table 1~\cite[p.~55]{kamerich}
	that we exclude and our reasoning.
	
	Case 11 is a typo, which should be $(B_3,A_1)$.
	The cases 13--16, all of type 
	 $(B_3,A_1\+ A_1)$,
	 respectively correspond to the cases
	$\big(\SO(7),\SO(4)\mnn\big)$
	(which we have absorbed 
	as $(B_3,D_2)$ 
	in our analysis),
	$\big(\SO(7),\SO(3)\oplus \SU(2)$,
	$\big(\SO(7),\SO(3)^{\oplus 2} \oplus [1]\big)$,
	and $\smash{\big(\Spin(7),\SO(4)\mnn\big)}$,
	which is rationally equivalent 
	to $\smash{\widetilde G_3(\R^8)}$, 
	and which we noted in 
	\Cref{rmk:correction}
	has the wrong cohomology. 
	We write $(B_4,A_3)$, case 17,
	as $\big(\SO(2k+1),\SO(2k-2)\mnn\big)$,
	whereas case 18, also $(B_4,A_3)$, is our $\big(\Spin(9),\SU(4)\mnn\big)$.
	The cases of type $(C_3,A_1 \+ A_1)$,
	comprising one unlabeled case and Kamerich's cases 28--30,
	are respectively
	$\big(\Sp(3),\SO(3)\.\D_3\Sp(1)\big)$,
	$\big(\Sp(3),\Sp(1)^{\oplus 2} \oplus [1]\big)$,
	$\big(\Sp(3),\Sp(1)\.\SU(2)_{10}\big)$, and
	$\big(\Sp(3),\Sp(1)\+\D_2\Sp(1)\big)$.
	Cases 36 and 37, both $(D_4,A_3)$,
	we have written as $\big(\SO(8),\SO(6)\mnn\big)$
	and $\big(\Spin(8),\SU(4)\mnn\big)$
	respectively.
	We do not include $(B_n,B_{n-1})$, case 19,
	since the Stiefel manifolds $V_2(\R^{2n+1})$	
	are rationally $(4n-1)$-spheres;
	$(D_4,A_1 \+ B_2)$, case 43,
	both embeddings of which represent $\widetilde G_3(\R^8)$;
	or $(G_2,A_1)$, case 48, 
	all four subcases of which are rational spheres.
	Since all yield spheres,
	we omit the cases 20, 25, 31, 33, 39, and 40,
	respectively 
	$(B_4,B_3) = \big(\Spin(9),\Spin(7)\mnn\big)$,
	$(B_3,G_2) = \big(\Spin(7),G_2\big)$,
	$(C_3,B_2) = \big(\Sp(3),\Sp(2)\mnn\big)$,
	$(C_n,C_{n-1}) = \big(\Sp(n),\Sp(n-1)\mnn\big)$, 
	$(D_n,B_{n-2}) = \big(\SO(4n),\SO(4n-1)\mnn\big)$,
	and $(D_4,B_3) = \big(\Spin(8),\Spin(7)\mnn\big)$
	either of the conjugacy classes of inclusions
	not covering $\big(\SO(8),\SO(7)\mnn\big)$---%
	these two $(D_4,B_3)$ pairs are equivalent by triality.
	
	Kamerich next deals with the possibility $G$ is
	a simply-connected connected group times a torus (possibly trivial),
	but not simple.
	He is interested in virtually effective, irreducible actions,
	and finds the cases $(K_1,H_1) \x (K_2,H_2)$
	and another case which he subjects to an analysis similar to that of our \Cref{thm:m>n},
	but without the restriction on dimensions of spheres,
	to arrive at his Table 2 of candidates satisfying the Onishchik degree criterion.
	When we exclude cases with two odd dimensions,
	there remain only the examples $\big(K_1 \x \Sp(1), H_1 \. i_{p,q} \U(1)\mnn\big)$
	for $K_1 \in \big\{ \SU(3), \Sp(2), G_2 \big\}$
	and $H_1 \iso \SU(2)$ in cases 1--4,
	and for $(K_1,H_1)$ equal to 
	$\big(\SU(k+1),\SU(k)\mnn\big)$, 
	$\big(\Spin(2k+1),\Spin(2k-1)\mnn\big)$, and 
	$\big(\Sp(k+1),\Sp(k)\mnn\big)$ respectively in cases 5--7.
	We do not find any irreducible {\rspp}s 
	not found in Kamerich's preliminary lists. 

	As far as his real goal goes,
	pairs giving rise to genuine homogeneous spaces 
	$G/H = S^m \x S^n$,
	for $m$ even and $n$ odd,
	he finds only product pairs, 
	$\Spin(7)/\SU(3) = \SO(8)/\SO(6) \iso S^6 \x S^7$,
	and three classes of examples with $G$ semisimple:
	the class $\Sp(2) / i_{p,q}\U(1) \iso S^2 \x S^3$ with $\gcd(p,q) = 1$,
	the class $\big(\SU(2n+1) \x \Sp(1)\mnn\big) / \big(\SU(2n)\.i_{p,q}\U(1)\mnn\big)
	\iso S^{4n+1} \x S^2$
	with $\gcd(p,q) = 1$ and $q|n$,
	and the class $\big(\Sp(n) \x \Sp(1)\mnn\big) / \big(\Sp(n-1)\.i_{p,1}\U(1)\mnn\big)
	\iso S^{4n-1} \x S^2$.
	Kamerich notes that when $G/H$ is homeomorphic
	to a product of spheres, it is in fact diffeomorphic to the standard product,
	but also finds an example which is merely homotopy equivalent without being homeomorphic.

\end{discussion}
\begin{discussion}[Comparison with later work]\labell{rmk:comparison-Kramer}
	Kramer and his students
	want to find pairs $(G,H)$ such that $\H(G/H;\Z)$
	is isomorphic to $\H(S^m \x S^n;\Z)$ with $m$ and $n$ both odd
	or $n > m$ with $m$ even and $n$ odd,
	but need to find candidates with the desired rational 
	cohomology along the way.
	The union of their classifications
	contains most of what we need,
	with a few exceptions, as we have already noted.
	Briefly, these are as follows:
	the pair $\big(\SU(4),\SO(4)\mnn\big)$ is missing,
	as are two of the four $(C_3,A_1\+ A_1)$ pairs,
	namely
	$\big(\Sp(3),\Sp(1)\+\D_2\Sp(1)\big)$ 
	and 
	$\big(\Sp(3),\SO(3)\.\D_3\Sp(1)\big)$.
	The case $\big(G_2 \x \Sp(2),  \Sp(1)\.\D\Sp(1)\.\Sp(1)\mnn\big)$
	is mentioned in passing (p. 73) and shown to not have
	the correct integral cohomology, but not tabulated
	as having the desired rational cohomology.
	Also, 
	Kramer's pairs $\big(\Spin(7),\SO(4)\big)$ and $\big(\SO(8),\SO(3) \x \SO(5)\mnn\big)$,
	as noted in \Cref{rmk:correction},
	must be excluded as they do not give the proper cohomology.
	Apart from this, our results agree.
	
	When Kramer goes on to consider cohomology over $\Z$,
	he finds the following examples with the integral cohomology of 
	a product $S^m \x S^n$ with $m$ even less than $n$ odd:
	$\big(\Spin(9),\SU(4)\mnn\big)$
	and $\big(\Spin(10),\SU(5)\mnn\big)$,
	giving rise to the same quotient,
	two of the three cases $\big(\Sp(3),\Sp(1)\.\Sp(1)\mnn\big)$,\footnote{\ 
	He does not consider the remaining case,
	which one can however show has $H^8 \iso \Z/3$.
	In fact, the pair $\big(\Sp(3),\Sp(1)\+\SU(2)_{10}\big)$ 
	also appears to have torsion, namely $H^8 \iso \Z/91$.
	}
	the pairs $\big(\SO(2k),\SO(2k-2)\mnn\big)$
	giving rise to Stiefel manifolds $V_2(\R^{2k})$, 
	and $\big(\SU(5),\SU(2)\+\SU(3)\mnn\big)$.
	
	The majority of our calculations of $Z_{G}(H)^0$ 
	can also be found in Kramer's work. 
	However, 
	we have found that $Z_{G}(H)^0$ for the pairs $(G,H) = \big(\SO(7),\SO(3)\+\SU(2)\mnn\big)$,
	$\big(\Sp(4),\SU(4)\mnn\big)$,
	and $\big(\Sp(3),\SU(3)\mnn\big)$ 
	is in fact $\U(1)$, rather than the trivial group,
	our computations of $Z_G(H)^0$
	for the first four lines
	of \Cref{table:Kramer+Wolfrom} seem to be original,
	and the excluded cases seem not to have been completed.
\end{discussion}

\begin{discussion}[General patterns]\labell{rmk:general-discussion}
  \ \begin{itemize} 
\item
	From \Cref{table:Kramer+Wolfrom}, 
	we see that whenever $Z_G(H)^0 \iso S^1$,
	we also have $H = H_S$. 
	We can see this more directly by observing that if 
	we could have
	$Z_G(H)^0 \iso S^1$ and $H < H_S$,
	then $(G,H_S)$ would be a rational sphere pair
	with $Z_G(H_S)^0 \leq Z_G(H)^0 = S^1$ a circle,
	and this does not occur in the classification \Cref{table:OddRationalSphere},
	but it would be nice to see a conceptual reason for this that does not pass
	through some classification.
\item 
	The converse also seems to nearly hold: if $H \ncong S^1$
	and $H = H_S$, then in all but one case, we have $Z_G(H)^0 H/H \iso S^1$.
	Again, we do not have a direct argument why this should be.
\end{itemize}
\end{discussion}

\begin{discussion}[Discussion of selected cases]\labell{rmk:example-discussion}
\ 
\begin{enumerate}[label=(\alph*)]
	\item The quotient $\SU(4)/\SO(4)$ is not included by 
	Kramer~\cite{kramer2002homogeneous}, 
	but we find it in Kamerich's 
	classification~\cite[Table~1, p.~55]{kamerich} 
	as the unlabeled case before case 5.
	To compare $\SU(4)/\SO(4)$ with $\SU(4) / \big( \SU(2) \+ \SU(2)\mnn\big)$, 
	we first note that the quotients are both simply-connected, 
	but have nonisomorphic $\pi_2$
	since
	$\pi_1 \SO(4)$ and $\pi_1\big(\SU(2) \+ \SU(2)\mnn\big)$
	are nonisomorphic,
	and different Dynkin indices, as Kamerich notes.
	In terms of more familiar spaces, we have
	\begin{align*}
		\frac{\SU(4)}{\SO(4)} 
			&\homeo 
		\frac{\SU(4)/\Z_2}{\SO(4)/\Z_2} 
			\homeo 
		\frac{\SO(6)}{\SO(3) \+ \SO(3)}
			=
		\widetilde{G}_3(\R^6)
	\mathrlap,\\
	\\
		\smash{\frac{\SU(4)}{\SU(2) \+ \SU(2)}} 
			&\homeo 
		\smash{\frac{\SU(4)/\Z_2}{\big(\SU(2) \+ \SU(2)\mnn\big)/\Z_2} 
			\homeo 
		\frac{\SO(6)}{\SO(4)}=V_2(\R^6)}
	\mathrlap.
	\end{align*} 
	For explicit matrix expressions for the isomorphisms,
	see Yokota~\cite{yokota1979}.

\item The quotients 
$\Sp(3)/\big(\Sp(1)\+\D_2\Sp(1)\mn\big)
\homeo
\Sp(3)/\big(\Sp(1)\+\SU(2)\mnn\big)$ 
and $\Sp(3)/\big(\SO(3)\.\D_3\Sp(1)\mn\big)$ 
are not included by Kramer~\cite{kramer2002homogeneous}, 
but are by Kamerich~\cite[Table~1, p.~58]{kamerich}. 
Here $\D_2\Sp(1)$ and $\D_3\Sp(1)$ 
denote the diagonally embedded copies of $\Sp(1)$ in 
$\Sp(2)$ and $\Sp(3)$ respectively 
and $\SO(3)$ is the subgroup of real matrices in $\Sp(3)$. 
As noted in \Cref{rmk:sphere-discussion}\ref{rmk:diagonalSp(1)}, 
$\D_2\Sp(1)$ is conjugate to $\SU(2)$ in $\Sp(2)$, 
explaining the above diffeomorphism. 
We will check both pairs have the
cohomology of $S^4 \x S^{11}$ in the proof to follow.

\item\labell{rmk:S6S7}
The quotients $\Spin(7)/\SU(3)$ and $\SO(8)/\SO(6)$
	can be seen to both be $V_2(\R^8) \homeo S^6 \x S^7$.
	For the latter,
	identifying $\R^8$ with the octonions $\defm\Oct$,
	we see $\SO(8)/\SO(6) = V_2(\R^8)$ can be identified
	with the unit tangent bundle to the sphere $S^7$ of unit octonions.
	Although~$S^7$ is not a group, it is a smooth H-space,
	and so left translation to $1 \in \R < \Oct$
	shows this unit tangent bundle is parallelizable,
	hence diffeomorphic to $S^6 \x S^7$
	(see also Kamerich~\cite[pp.~70,\,77]{kamerich}).
	The quotient $V_2(\R^4) = \SO(4)/\SO(2)$ is the unit tangent
	bundle to $S^3 = \Sp(1)$, 
	hence diffeomorphic to $S^3 \x S^2$ by the same reasoning.

	\hspace{20pt}
	For the former, write $\Oct = \Quat \+ \Quat\e$
	with $\e$ a new pure imaginary
	and fix the $\R$-basis 
	$\{1,j,\e,j\e,i,ij,i\e,ij\e\}$
	for $\Oct$ 
	to endow it with a linear $\SO(8)$-action.
	It is well known that $\Spin(8)$ 
	embeds in $\SO(8)^3$ 
	as the set of triples $(A,B,C)$
	such that for all $o,o' \in \Oct$
	we have $A(o)B(o') = C(oo')$,
	and it is a manifestation of triality that
	$A$ determines $\pm(B,C)$ and symmetrically, 
	so that each of the three coordinate projections
	is a double-covering
	$\Spin(8) \lt \SO(8)$. 
	The conditions $A = B = C$ cut out the subgroup $\defm{G_2}$
	of $\R$-algebra automorphisms of~$\Oct$.
	The equivalent conditions $A(1) = 1$ and $B = C$
	cut out a group $\defm{\Spin(7)^+}$ isomorphic to $\Spin(7)$,
	double-covering $[1] \+ \SO(7) < \SO(8)$
	under $(A,B,B) \lmt A$.
	The further demand $A(i) = i$
	gives $i B(o) = B(io)$, 
	and so cuts out a subgroup $\defm{\SU(4)^+}$
	taken injectively to $\SU(4) < \SO(8)$
	by $(A,B,B) \lmt B$.
	The subgroup $\defm{\Spin(7)^-} < \Spin(8)$ cut out by
	$B(1) = 1$ and hence comprising triples $(A,B,A)$,
	meets $\SU(4)^+$ in the group of triples $(A,A,A)$ with $A(i) = i$,
	namely the stabilizer of
	$i \in S^6 \subn \Im\, \Oct$
	under the action of $G_2 = \Aut_\R \Oct$.
	Hence this copy of $\SU(3)$ is
	the stabilizer of $(i,1)$ under the transitive
	action of $\Spin(7)^+$ on $S^6 \x S^7 \subn \Oct \x \Oct$
	by $(A,B,B)\.(o,o') \ceq \big(A(o),B(o')\mnn\big)$,
	yielding a diffeomorphism $\Spin(7)^+/\SU(3) \homeo S^6 \x S^7$.

\hspace{20pt}
	This explanation is mostly drawn from Kerr~\cite[\S6]{kerr1998}.
	
\item\labell{rmk:S6S7'}
	In the vocabulary of \ref{rmk:S6S7},
	the group $\defm{\SU(4)^-}$ of triples $(A,B,C) \in \SO(8)$
	with $B(1) = 1$ and $B(i) = i$ meets $\Spin(7)^+$ in 
	$\SU(3) = \Stab_{G_2} i$
	and they together generate $\Spin(8)$,
	so the inclusion $\Spin(7)^+ \longinc \Spin(8)$
	induces a diffeomorphism $\Spin(7)^+\mn/\SU(3) \lt \Spin(8)/\SU(4)^-$.

\item\labell{rmk:S6S15}
	The homogeneous space $\Spin(9)/\SU(4)$ 
	is the total space of a bundle
	\[
	S^6 \x S^7 = 
	\Spin(8)/\SU(4)^- \lt  \Spin(9)/\SU(4)^-\lt \Spin(9)/\Spin(8) = S^8
	\mathrlap,
	\]
	where $\Spin(8)/\SU(4)^+$ is as in \ref{rmk:S6S7} and \ref{rmk:S6S7'}
	and the inclusion $
	\Spin(8) \lt \Spin(9)$ is the standard
	one covering $\SO(8) \+ [1] \longinc \SO(9)$.
	As we have $\SU(4)^- < \Spin(7)^-$, there is a map from this bundle to
	\[
	S^7 = \Spin(8)/\Spin(7)^- \to \Spin(9)/\Spin(7)^- \to \Spin(9)/\Spin(8) = S^8\mathrlap.
	\]
	The fiber of $\Spin(8)/\SU(4)^-\longepi \Spin(8)/\Spin(7)^-$
	is $\Spin(7)^-/\SU(4)^-\homeo S^6$,
	as noted in~\ref{rmk:S6S7},
	so we may view this map as the coordinate projection
	$S^6 \x S^7 \lt S^7$,
	inducing an isomorphism on $H^7$.
	The projection $(A,B,A) \lmt A$ from \ref{rmk:S6S7}
	is a double-covering from $\Spin(7)^-$ to an $\SO(7)$ subgroup of $\SO(8)$
	giving one of the half-spin representations,
	and it follows $\Spin(7)^- \inc \Spin(8) \to \Spin(9)$
	is the map $\iota$ of \Cref{rmk:sphere-discussion}\ref{rmk:S15}.
	Thus the total space $\Spin(9)/\Spin(7)^-$ 
	of the second displayed bundle is $S^{15}$.
	Accordingly, in the \SSS,
	the fundamental class of the fiber $S^7$
	transgresses to that of the base $S^8$,
	so the induced map of spectral sequences
	shows the generator of degree $7$ in $\H(S^6 \x S^7)$
	transgresses to the fundamental class of $S^8$ as well.
	It follows by Poincar\'e duality 
	that $\H\big(\Spin(9)/\SU(4);\Z\big)$ 
	is isomorphic to $\H(S^6 \x S^{15};\Z)$.
	
\item\labell{rmk:S6S15'}
The embedding of $\SU(5)$ in $\Spin(10)$
	is obtained by lifting the standard embedding of $\SU(5)$ in $\SO(10)$.
	The subgroup $\SU(4) \+ [1]$ maps to $\SO(8) \+ [1]^{\oplus 2} < \SO(9) \+ [1]$,
	so the composite embedding $\SU(4) \to \SU(5) \to \Spin(10)$
	factors through the map $\SU(4)^-\lt \Spin(9)$ of \ref{rmk:S6S15}.
	To see the diffeomorphism $\Spin(9)/\SU(4) \lt \Spin(10)/\SU(5)$
	it remains to show $\Spin(9)$ and $\SU(5)$ meet in $\SU(4)$
	and together generate $\Spin(10)$.
	For the former, it is clear $\SU(4)$ lies in the intersection $H$,
	and the converse follows 
	by a dimension count and the exact sequence 
	$\pi_1\big(\Spin(10)/\SU(5)\mnn\big) \to \pi_0 H \to \pi_0 \Spin(9)$.
	For the latter, note
	the action $(A,B)\. C = ABC\-$ of $\SU(5) \x \Spin(9)$ on $\Spin(10)$
	is transitive since the induced action of 
	$\SU(5)$ on $\Spin(10)/\Spin(9) = \SO(10)/\SO(9) = S^9$,
	which is the standard action, is transitive.

	\item It is interesting that the two quotients $\Spin(7)/\SO(4)$ and 
	$\SO(8)/\big(\SO(3)\x \SO(5)\mnn\big) = \widetilde{G}_3(\R^8)$ 
	cited by Kramer~\cite[pp.~59--60]{kramer2002homogeneous} 
	are diffeomorphic.
	Kerr~\cite[\S6]{kerr1996some}
	notes the projection $(A,B,B) \lmt B$ 
	is an embedding $\Spin(7)^- \longmono \SO(8)$
	and that the action of the image on~$\R^8$
	induces a transitive action on $\widetilde{G}_3(\R^8)$,
	and it can be shown that any element stabilizing the $3$-plane $\R\{i,j,k\}$
	also fixes $1$, 
	so that the stabilizer is a subgroup of $G_2$ stabilizing $\Quat$.
	A dimension count shows that it is this entire subgroup, 
	which is known to be isomorphic to $\SO(4)$.
	We have seen in \Cref{rmk:correction} 
	that this Grassmannian does not have the 
	cohomology of $S^{4} \x S^{11}$,
	so these two quotients are not included in \Cref{table:Kramer+Wolfrom}.

\item
	One of the descriptions of $F_4$ is as the group of $\R$-algebra
	automorphisms of the $27$-dimensional Jordan algebra 
	$\fh_3(\Oct)$ and it follows indirectly from this description
	that $F_4$ acts transitively on the octonionic projective plane
	$\OP^2$ with stabilizer $\Spin(9)$.
	In fact, this realizes $\OP^2$ as a rank-one symmetric space,
	and the adjoint representation of 
	$\Spin(9)$ on $\f f_4$ decomposes as the $+1$-eigenspace $\spin(9)$
	of the involutive automorphism of $F_4$
	plus the $(-1)$-eigenspace, 
	of dimension $52-36 = 16$,
	the isotropy representation of 
	$\Spin(9)$ on the tangent space to $\OP^2$ at $1F_4$.
	Because $\OP^2$ is the irreducible symmetric space of type $FII$
	and has positive Euler characteristic,
	this $16$-dimensional representation is irreducible~\cite[p.~302, 8.13.4]{wolf2011book},
	and hence it can only be the spin representation, so $\Spin(9)$ acts transitively on the fiber
	$S^{15}$ of the unit tangent bundle
	with stabilizer $\Spin(7)^+$ as discussed in \ref{rmk:S6S15}.
	Thus the unit tangent bundle can be written as
	$S^{15} \to F_4/\Spin(7)^+ \to \OP^2$.
	Since the Euler characteristic $\chi(\OP^2)$ is 
	$\dim (H^0 \+ H^8 \+ H^{16}) = 3$,
	the fundamental class $[S^{15}]$
	transgresses to $3[\OP^2]$	in the \SSS, 
	so $\Ei \iso \H\big( F_4/\Spin(7);\Q \big)$
	is spanned by the four remaining classes, of degrees
	$0$, $8$, $23$, and $31$ 
	and hence the rational cohomology ring is isomorphic to $\H(S^8 \x S^{23})$
	by Poincar\'e duality.
	
\item 
	For the pair $\big(F_4,\Sp(3)\mnn\big)$,
	recall that $\Sp(3)$ is contained in a rank-$4$ subgroup
	$K = \Sp(1) \. \Sp(3)$ of $G = F_4$
	such that $G/K$ 
	is the symmetric space ${F\mn I}$ of $\HP^2$ subspaces of $\OP^2$.
	Ishitoya and Toda~\cite[Cor.~4.6]{ishitoyatoda}
	determine that the torsion-free quotient 
	of $\H(G/H;\Z)$ is isomorphic to $\H(S^8 \x S^{23})$
	on the way to computing the integral cohomology of $G/K$.
\item 
	For $G = \Sp(k) \x \Sp(2)$
	and $H = \Sp(k-1) \x \D\Sp(1) \x \Sp(1)$,
	if we take $K = \Sp(k) \x \Sp(1) \x \Sp(1)$,
	then $K/H \to G/H \to G/K$
	can be identified with 
	the unit sphere bundle
	of the Whitney sum of $k$ copies of the 
	tautological quaternionic line bundle over $\HP^1 = G_1(\HH^2)$.
	
\item 
For $G = G_2 \x \Sp(2)$
and $H \iso \Sp(1) \. \D\Sp(1) \. \Sp(1) = H_1\.H_2\.H_3$, 
there are two possibilities.
In both, we take $H_3$
to be $[1] \+ \Sp(1) < \Sp(2)$.
As for $H_1$ and $H_2$,
recall 
from \Cref{rmk:sphere-discussion}\ref{rmk:G2}
that $G_2$ contains a
virtual product subgroup $\SO(4) = \SU(2)_1\.\SU(2)_3$.
We may then take $H_1 = \SU(2)_1$
and $H_2$ to be the diagonal $\Sp(1)$ in 
$\SU(2)_3 \x \big(\Sp(1) \+ [1]\big) < \SO(4) \x \Sp(2)$,
or we may take 
$H_1 = \SU(2)_3$
and 
$H_2$ the diagonal in $\SU(2)_1 \x \big(\Sp(1) \+ [1]\big)$.
\end{enumerate}
\end{discussion}

\begin{proof}[Sketch proofs for inclusions of $(G,H)$ in \Cref{table:Kramer+Wolfrom}]
	We noted that \Cref{thm:deg} gives a necessary condition for $(G,H)$ to be a \rspp,
	but saw in \Cref{rmk:correction} 
	that it is possible a pair satisfying
	that criterion not have the correct cohomology, 
	so we need to compute the cohomology of all candidates.
	
	The tool of choice in most cases is the
	\defd{Cartan model} $\H(BH) \ox \H(G)$,
	a \CDGA whose cohomology is $\H(G/H)$
	and for which the spectral sequence
	associated to the filtration by degree in $\H(BH)$
	is the \SSS of the Borel fibration $EH \ox_H G \to BH$.
	The differential of the Cartan model
	is a derivation vanishing on $\H(BH)$;
	on homogeneous exterior generators
	of $\H(G)$, it is the composite
	$H^{*-1}(G) \os\tau\to \H(BG) \to \H(BH)$  
	of a lift of the universal transgression 
	and the functorial map $\H B(H \inc G)$.
    \revision{See Onishchik~\cite[\S\S\,8\,\&\,12]{onishchik} 
    for setup (with real coefficients)
    and Greub, Halperin, and Vanstone~\cite[Ch.~XI.4]{GHVIII} 
    for examples of computations in this model}.
	
	In most of the cases in the table, 
	computation of $\H(G/H)$
	via the Cartan model is not particularly challenging,
	so we will content ourselves 
	to illustrate a few representative examples
	not already covered in \Cref{rmk:example-discussion}.

\begin{itemize}
\item
	For $(G,H) = \big(G_2,i_{p,q}\U(1)\mnn\big)$,
	one has $\H (G) = \ext[z_3,z_{11}]$ and $\H(BH) = \Q[c_1]$,
	and $H^4(BG) \lt H^4(BH) = \Q c_1^2$ 
	is nonzero whenever $G$ is semisimple,
	independent of $p$ and $q$,
	so the ideal $(dz_3)$ of $\H(BH)$ is $(c_1^2)$.
	Irrespective of what $dz_{11} \in \Q c_1^6$ is, 
	it will lie in this ideal,
	so $d_{12}z_{11} = 0$ in the \SSS,
	showing $\H(G/H) \iso \Q[t]/(t^2) \ox \ext[z_{11}]$.
	The same argument applies in all cases
	with $H \iso \U(1)$.
\item 
	The pairs
$(G,H) = \big(\Sp(3),(\Sp(1)\+\D_2\Sp(1)\mnn\big)$ 
and $\big(\Sp(3),\SO(3)\.\D_3\Sp(1)\mnn\big)$ 
both have the rational cohomology of $S^4 \x S^{11}$.
For the former, 
we can factor the inclusion $\Sp(1)\+\D_2\Sp(1) \longinc \Sp(3)$
through $\Sp(1)^{\oplus 3}$.
The factor map \[
\H B\Sp(3) \lt \H B \Sp(1)^{\oplus 3}
\iso \big(\mn\H B\Sp(1)\mnn\big){}^{\otimes 3}
\iso \Q[q,q',q'']
\]
embeds the symplectic Pontrjagin classes $q_1$, $q_2$, $q_3$
as the elementary symmetric polynomials on the generators 
$q,q',q''$.
Under $(B\D_2)^*\: \H B\Sp(1)^{\oplus 2} \lt \H B\Sp(1)$,
both generators $q'$, $q''$ are taken to the polynomial
generator $r \in H^4 B\Sp(1)$,
so the map $\H B\Sp(3) \lt \H B\big(\Sp(1) \+ \D_2 \Sp(1)\mnn\big)$
is given by
\[
q_1 \lmt q + 2r,\qquad
q_2 \lmt 2qr + r^2,\qquad
q_3 \lmt qr^2\mathrlap.
\]
Computing the cohomology of the Cartan model 
$\Q[q,r] \ox \ext[z_3,z_7,z_{11}]$,
one finds $\H(G/K) \iso \Q[r]/(r^2) \ox \ext[\bar z_{11}]$
for an element $\bar z_{11} \in z_{11} + (\im d)$.
As noted earlier, integrally one has $H^8 \iso \Z/3$,
as one has $q \sim -2r$ in $H^4$ and $0 \sim 2qr + r^2 \sim -4r^2+r^2$ in $H^8$
using the \SSS.

\hspace{20pt}
As for the inclusion $\SO(3)\.\D_3\Sp(1) \longinc \Sp(3)$,
the maximal torus $\SO(2) \+ [1]$ of $\SO(3)$
can be conjugated to the diagonal subgroup 
with elements $\diag(\w,\w\-,1)$ for $\w \in \U(1)$
without moving the maximal torus $\big\{{\diag}(\z,\z,\z)\big\}$ 
of $\D_3\Sp(1)$.
The two-dimensional maximal torus $S$ generated by these two subtori
has generic element $(\w\z,\w\-\z,\z)$.
Taking $s \in H^2 BS$ to correspond to $\w$ and $t$ to $\z$,
the restriction from 
$\Q[t_1,t_2,t_3] \iso \H B\U(1)^{\oplus 3}$
is 
\[
t_1 \lmt t+s,\qquad\quad
t_2 \lmt t-s,\qquad\quad
t_3 \lmt t\mathrlap.
\]
Restricting the $q_j$, which are the elementary symmetric
polynomials in $t_1^2$, $t_2^2$, $t_3^2$, one finds
the map to $\H(BH) \iso \Q[s^2,t^2]$ is given by
\[
q_1 \lmt 2s^2 + 3t^2,\qquad
q_2 \lmt s^4 + 3t^4,\qquad
q_3 \lmt s^4 t^2 - 2s^2 t^4 + t^6\mathrlap.
\]
One then can compute the cohomology of the 
model $\Q[s^2,t^2] \otimes \ext[z_3,z_7,z_{11}]$
and find it is isomorphic to $\Q[t^2]/(t^4) \otimes \ext[\bar z_{11}]$
for an element $\bar z_{11} \in z_{11} + ({\im d})$.

\end{itemize}
	The cases involving classical groups are mostly resolved 
	by choosing a maximal torus $T_H$ of $H$ contained in a maximal torus $T_G$ of $G$
	and identifying $\H(BG) \lt \H(BH)$ with the map of invariants
	$\H(BT_G)^{W_G} \lt \H(BT_H)^{W_H}$,
	using the fact that these invariants are 
	generated by the symmetric polynomials $c_\ell$
	in the generators $t_j$ of $\H(BT) = \Q[\vec t]$ for groups of type $A_k$,
	symmetric polynomials $p_\ell$ in the $t_j^2$ for groups of type $BC_k$ and $D_k$,
	and additionally the product $e = t_1 \cdots t_k = \sqrt{p_k}$ for groups of type $D_k$.
	We do two examples of this type to communicate the basic idea.
	
\begin{itemize}
\item
	For $(G,H) = \big(\SU(5),\SU(2) \+ \SU(3)\mnn\big)$,
	the differential $\ext[z_3,z_5,z_7,z_9] \to \Q[c_2, c_3, c_4, c_5] = \H(BG) 
				\to 
			\H(BH) = \Q[c'_2] \ox \Q[c''_2,c''_3]$ sends
	\eqn{
	z_3 \lmt	c_2&\lmt c'_2 + c''_2, 
	\\
	z_5 \lmt	c_3&\lmt c''_3, 
	\\
	z_7 \lmt	c_4&\lmt c'_2 c''_2, 
	\\
	z_9 \lmt	c_5&\lmt c'_2 c''_3. 
		}
	In the \SSS of $G \to G/H \to BH$, then,
	we see  $c'_2 \equiv -c''_2$ after $E_4$
	and 	$c''_3 \equiv 0$ after $E_6$,
	so $d_8 z_7 \equiv -(c'_2)^2$
	and $d_{10}z_9 \equiv 0$.
	Thus $\H(G/H) \iso \Q[c'_2]/(c'_2)^2 \ox \ext[z_9]$.
	
\item
	For $G = G_2 \x \Sp(2)$ and $H = \Sp(1) \. \D\Sp(1)\.\Sp(1)$,
	the maximal torus 
	$T \iso \U(1)^4$ of $\Sp(1)\.\Sp(1) \x \Sp(1) \+ \Sp(1) < G$
	meets $H$ in $T' = \U(1) \x \D\U(1) \x \U(1)$,\footnote{\ 
		The two copies of $\U(1) < \Sp(1)$ in $G_2$
		meet in $\{\pm 1\}$,
		but the diagonal $\D\U(1)$ with one
		coordinate in $\Sp(1) \+ [1]$
		does not meet the $\U(1)$,
		so this really is a direct product.
		}
	so the induced map $\Q[t_1,u_1,u_2,t_2] = \H(BT) \lt \H(BT') = \Q[s_1,v,s_2]$
	is given by $t_j \lmt s_j$ and $u_j \lmt v$.
	Writing $\H(BG_2) \ox \H\big(B\Sp(2)\mnn\big) = \Q[r_4,q_{12}] \ox \Q[p_1,p_2]$
	and $\H(BH) = \Q[p',p'',p''']$, we get the splittings of characteristic classes
	\eqn{
		r	&\lmt	p' + p''\mathrlap,\\
		p_1 &\lmt	p'' + p'''\mathrlap,\\
		p_2 &\lmt	p''p'''\mathrlap.
		}
	For our purposes, 
	we do not need to work out what $q_{12}$ goes to, 
	but observe only that it will be some only polynomial 
	in $p'$ and $p''$.
	In the spectral sequence of $G \to G/H \to BH$, we have $p' \equiv - p'' \equiv p'''$ 
	from $E_6$ on
	since $p'+p''$ and $p'' + p''$ are images of $d_5$,
	and $(p')^2 \equiv 0$ from $E_{10}$ on since
	$p'' p''' \equiv (-p')p'$ is in the image of $d_9$.
	Since the image of $q_{12}$ is congruent 
	modulo the previous differentials
	to a polynomial in $p'$ alone and $(p')^2 \equiv 0$,
	it follows $d_{12}z_{11} = 0$,
	so we have
	$\H(G/H) \iso \Q[p']/(p')^2 \ox \ext[z_{11}]$.\qedhere
\end{itemize}
\end{proof}

\section{Isotropy-formal pairs}\labell{sec:isotf}
In \Cref{sec:spheres} we reduced the study of \isotfity of corank-one pairs $(G,K)$ 
to the computation of $\H(G/H_S)$
and examination of the corresponding irreducible pair in 
\Cref{table:Kramer+Wolfrom}.
In this section we determine which of those pairs is \isotf.
The largest and simplest class of cases is that of products $(K_1 \x K_2,H_1 \x H_2)$.

\blem\labell{thm:formalityOfProduct}
Suppose $G=K_1\cdot K_2$ is a virtual direct product of two compact, connected Lie groups, 
and $H = H_1\cdot H_2 \leq G$ is also a virtual direct product, with $H_j \leq  K_j$. 
The pair $(G,H)$ is \isotf if and only if both $(K_1,H_1)$ and $(K_2,H_2)$ are.
\elem
\begin{proof}
Noting that finite coverings do not impact the question of isotropy-formality
by \Cref{thm:cover-isotf},
we may assume $(G,H)$ is an honest product of $(K_1,H_1)$ and $(K_2,H_2)$.
Then the lemma follows from \Cref{thm:CF} or \Cref{thm:dim-criterion}.
\end{proof}

%
%
%
%
%
We now check isotropy-formality for 
the remaining pairs in \Cref{table:Kramer+Wolfrom}. 
For all pairs, we will simultaneously determine $\Cen_G(H)^0$ and $H_S$, 
which play crucial roles in \Cref{thm:enlargement-isotf} 
and \Cref{thm:enlargement-corank1} respectively,
determining \isotfity in the virtually effective but reducible case.
The calculation of $Z_G(H)^0$ appears in Kramer~\cite{kramer2002homogeneous}
and requires only mild 
expansion and correction, but
the calculation of $H_S$ 
in the cases we need does not seem to be in the literature.

\begin{discussion}\labell{def:HS-arguments}
There are a few basic principles underlying these computations.
For $G$ and $H$ presented as matrix groups or products thereof,
it is elementary and usually uncomplicated to compute $Z_G(H)^0$
directly.
When $H$ is a product, one can compute the centralizer of one factor
and then within that subgroup the centralizer of the other factor.
For the remaining pairs, in which $G$ is exceptional, we take recourse to the literature.

As for the maximal regular subgroup $H_S$ sharing the maximal torus $S$ of $H$
(\Cref{def:LH}),
it is usually not fastest to compute
it in terms of the root space composition of \Cref{thm:HS}.
In many cases, $H$ is regular,
which we
usually verify by checking $\rk Z_G(H)H/H = 1$
using \Cref{thm:enlargement-corank1},
and then $Z_G(H)^0H/H$ is either isomorphic to $S^1$ or a group of type $A_1$.
It is almost invariably $S^1$.
This is usually simple to verify directly,
but it is simpler still to note that if it is of type $A_1$,
then $G$ admits a maximal rank subgroup $H \x A_1$,
which can be ruled out by consultation with Borel--de Siebenthal~\cite{boreldesiebenthal}.
Then to check if $H$ agrees with $H_S$, we note that if $H_S > H$,
then by \Cref{thm:extend-H}, 
the pair $(G,H_S)$ appears in the table \ref{table:OddRationalSphere}
of rational sphere pairs, so if no pair $(G,K)$ with $K > H$ appears in this list,
then $H = H_S$. 
If, on the other hand, there are any such $(G,K)$ on the list,
then it remains to check if any of the $K$ are regular,
which again can be accomplished by determining the centralizer.
For brevity we call this the \defd{\csa}. 
\end{discussion}
\begin{proof}[Proof of \Cref{thm:isotfclassification}]
	\Cref{thm:enlargement} and 
	\Cref{thm:enlargement-equiv}
	extend the classification of irreducible {\rspp}s
	in \Cref{table:Kramer+Wolfrom}
	to the stated classification of \ve\ {\rspp}s
	in terms of subgroups $P < Z_G(H)^0$ 
	descending to rank-one subgroups of $N_G(H)^0/H
	= Z_G(H)^0 H/H$.
	\Cref{thm:enlargement-isotf} 
	shows such extensions are \isotf if and only if $H_S > H$.
	Thus it remains only to complete
	\Cref{table:Kramer+Wolfrom}
	by determining which irreducible {\rspp}s
	are \isotf and finding $Z_G(H)^0$ and $H_S$
	in all cases.
	
	By Lemmas \ref{thm:isotf-sphere} and \ref{thm:formalityOfProduct}, 
	all the pairs $(G,H)$ whose resulting homogeneous spaces are rational cohomology spheres 
	or virtual direct products of rational cohomology spheres are \isotf.
	This particularly covers the case $S^1 \x S^m$ studied by Bletz-Siebert.
	It remains to check Kamerich, Kramer, and Wolfrom's non-product cases 
	from \Cref{table:Kramer+Wolfrom}.
		

	When there is obviously some larger closed, connected $K$ sharing the
	maximal torus $S$ of $H$,
	we apply \Cref{thm:extend-H} to show the following pairs 
	\textbf{\GH are \isotf}
	in the following cases.
	We interleave a brief argument 
	computing $Z_G(H)^0$ and $H_S$ in each case.

\begin{itemize}
	\item $\big(\SU(4), \SU(2)\+\SU(2)\mnn\big)$ and $K = \Sp(2) > \Sp(1) \+ \Sp(1)$

	  Here $Z_G(H)^0 = \big\{\!\diag(z,z,z\-,z\-)\big\}$
	  and $H = H_S$ by a \csa.

	\item $\big(\SO(2k+1), \SO(2k-2)\mnn\big)$ and $K = \SO(2k-1)$

	  Here $Z_G(H)^0 = [1]^{\oplus 2k-2}\+\SO(3)$
	  directly and $[1]^{\oplus 2k-1} \+ \SO(2)$ 
	  centralizes $K$, which is thus regular;
	  hence $K = H_S$.
	

	\item $\big(\Spin(9),\SU(4)\mnn\big)$ and $K = \Spin(7) > \SU(4)$
		
	To see regularity,
	project down to $\SO(8)$, preserving $\SU(4)$,
	note that $\SU(4) < \U(4) < \SO(8)$ is normalized by the diagonal
	$\D_4\U(1)$, and lift to find its double cover in $\Spin(8)$
	centralizes $\SU(4)$ as well.
	That $Z_G(H)^0 \iso \U(1)$ and
	$H = H_S$ follow by a \csa.

	\item $\big(\Spin(7),\SU(3)\mnn\big)$ and $K = G_2$
	
	
	Since $\SU(3) < \SU(4)$ is normalized by $\big\{\!\diag(z,z,z,z^{-3})\big\}$,
	and $\SU(4)$ is contained in $\Spin(7)$ (see \Cref{rmk:example-discussion}\ref{rmk:S6S7}),
	we may run a \csa.

	\item 	$\big(\Sp(3),\Sp(1)\+\Sp(1)\+[1]\big)$ and $K = \Sp(2) \+ [1]$

		One has $Z_G(H)^0 = [1]^{\oplus 2} \oplus \Sp(1)$ by direct
		computation and this group also centralizes $K$,
		so $K = H_S$ is regular.
\end{itemize}

	We can also directly find a group $K$ sharing a maximal torus $S$ 
	and such that \GK is a \rsp and use
 \Cref{thm:isotropyOddSphere}, 
 to show the following \textbf{\GH are \isotf}.

\begin{itemize}

\item $\big(\SU(4), \SO(4)\mnn\big)$ and $K = \Sp(2)$ and $S = \SO(2)\+\SO(2)$ 

	The centralizer is $\pm[1]^{\oplus 4}$
	and $H' = \SU(2)^{\oplus 2}$ contains
	$S$; we saw above that $H' = H_S$.

\item $\big(\SO(7),\SO(3)\+\SO(3)\+[1]\big)$ and $K = \SO(5) \+ [1]^{\oplus 2}$ and $S = \SO(2) \+ [1] \+ \SO(2) \+ [1]^{\oplus 2}$

	The centralizer is $\pm[1]^{\oplus 6} \oplus [1]$,
	but $K$ is centralized by $[1]^{\oplus 5} \oplus \SO(2)$,
	and by a \csa, $K = H_S$.

%

\item $\big(\SO(2k),\SO(2k-2)\mnn\big)$ with $k\geq 3$ and $K = \SO(2k-1)$
and $S = \SO(2)^{\oplus k -1}$

One sees the centralizer is $\pm[1]^{\oplus 2k-2} \+ \SO(2)$
and applies a \csa ($K$ is not regular, since its centralizer is $\pm[1]^{\oplus k}$).

\end{itemize}

Not quite of this class, but closely related, is the following:

\begin{itemize}
	\item	$\big(\Spin(8),\SU(4)\mnn\big)$ \textbf{is \isotf}.
	
	Recall that in $\Spin(8)$, the triality automorphism takes
	the subgroup $\SU(4)$ to the standard $\Spin(6)$%
	~\cite[pp.~40--1]{kamerich}\cite[p.~60]{kramer2002homogeneous},
	but $\big(\Spin(8),\Spin(6)\mnn\big)$
	double-covers the just discussed \isotf pair $\big(\SO(8),\SO(6)\mnn\big)$.
	We found $H = H_S$ is regular and $Z_G(H)^0 \iso S^1$
	while discussing $\big(\Spin(9),\SU(4)\mn\big)$ above.
\end{itemize}

For a few examples, we are able to determine the \mrp $(G,H_S)$
associated to $S$ 
by observing $H$ is not regular, 
but a subgroup $H'$ is, and $H/H'$ is rationally homotopy equivalent
to an even-dimensional sphere, so that
by \Cref{thm:dichotomy} we have $H' = H_S$.
Then because $(G,H_S)$ is not a \rspp,
we may conclude by \Cref{thm:spheres} 
that \textbf{\GH is {not} \isotf} in the following cases.
Moreover, since $H > H_S$, we conclude $H$ is not regular,
and so $Z_G(H)^0 H = H$ by \Cref{thm:enlargement-corank1};
but then $Z_G(H)^0 = Z(H)^0$ is trivial.
\begin{itemize}

\item $\big(\Sp(3),\Sp(1)\+\SU(2)_{10}\big)$, with $H' = \Sp(1)\.i_{3,1}\U(1)$

	The standard maximal torus $\U(1)^{\oplus 3}$ normalizes $H'$.
	According to Berger~\cite[p.~237]{berger1961},
	$i_{3,1}\U(1)$ is a maximal torus of
	the embedded subgroup $\SU(2)_{10}$ in $\Sp(2)$,
	and so $H/H' \homeo S^2$.

\item $\big(\Sp(k) \x \Sp(2),\Sp(k-1) \x \D\Sp(1) \x \Sp(1)\mnn\big)$ 
for $k\geq 2$, with $H' = \Sp(k-1) \x \D\U(1) \x \Sp(1)$



\item $\big(G_2 \x \Sp(2), \Sp(1) \x \D\Sp(1) \x \Sp(1)\mnn\big)$,
with
$H' = \Sp(1) \x \D\U(1) \x \Sp(1)$

Note that this actually comprises two cases, depending which of the subgroups
$\SU(2)_1$ and $\SU(2)_3$ is chosen to serve as which $\Sp(1)$ in $H$.
\end{itemize}

Again not quite in this class, but related, is the following:

\begin{itemize}
\item $\big(\Sp(3),\SO(3)\.\D_3\Sp(1)\mnn\big)$ is not \isotf.

	For the centralizer, one has $Z_G\big(\SO(3)\mnn\big) = \D_3\Sp(1)$
	but $Z\big(\Sp(1)\mnn\big) = \{\pm 1\}$,
	so $H$ is not regular.
	A maximal torus $S$ is $\SO(2) \. \D_3 \U(1)$,
	and one has $H/S \homeo S^2 \x S^2$.
	This torus $S$ is also contained in $H' = \SU(2)\.\D_3 \U(1)$,
	which is normalized by the torus $\SO(2)\.\D\U(1) \+ \U(1)$,
	hence regular.
	To see $H' = H_S$,
	note that by two spectral sequence collapse arguments we have
	$\dim_\Q \H(G/S) = 16$ and $\dim_\Q \H(G/H') = 8$
	by the collapse of the corresponding \SSS.
	A larger $H'' > H'$ sharing $S$ as a maximal torus
	would have $\dim_\Q \H(G/H'') \in \{2,4\}$ 
	and $H^{11}(G/H'') \iso \Q$
	by comparing
	the Cartan algebras of $G/H''$ and $G/H'$ using
	\Cref{thm:formal} and the resulting observation
	that formality is determined by $(G,S)$ alone.
	But then $(G,H'')$ would appear in \Cref{table:OddRationalSphere},
	and it does not, 
	since $H'$ is not contained in any conjugate of $\Sp(2)$.

\end{itemize}

For the remaining examples, 
we compare $N$, $W_{H_S}$, and $W_H$,
per \Cref{thm:spheres}.
In the easier of these, 
$H$ is regular and $w_0^G$ acts as $-{\id}$ on $\ft$,
so by \Cref{thm:-Id},
\GH is \isotf if and only if 
$w_0^H$ does not act as $-{\id}$ on $\fs$.
In the following cases, both $w_0^G$ and $w_0^H$ act as $-{\id}$, so
\textbf{\GH is not \isotf}.
\begin{itemize}

	\item $\big(\SO(7),\SO(3)\+\SU(2)\mnn\big)$
	
	The block $\SU(2) < \U(2) < \SO(4)$ is normalized by the 
	standard maximal torus $\U(1)^{\oplus 2} < \U(2)$,
	so  $H$ is regular.
	That $Z_G(H)^0 \iso \U(1)$ and
	$H = H_S$ follow by a \csa.
	

\item $\big(\Sp(3),\Sp(1)\+\D_2\Sp(1)\mnn\big)$ (or $\big(\Sp(3),\Sp(1)\+\SU(2)\mnn\big)$)


	Here $W_H$ and $N$ are the same as for $\big(\SO(7),\SO(3)\+\SU(2)\mnn\big)$.
	Only $[1] \+ \Sp(2)$ centralizes the first factor $\Sp(1) \+ [1]^{\oplus 2}$ of $H$.
	Within $\Sp(2)$, only $\SO(2)$ centralizes $\D_2\Sp(1)$,
	so $Z_G(H) = [1] \+ \SO(2)$.
	That $H = H_S$ follows by a \csa.

\item $\big(F_4,\Spin(7)\mnn\big)$

	Within $\Spin(9) < F_4$, the subgroup $\Spin(2)$ normalizes
	the standardly embedded $\Spin(7) = H'$
	(as $\SO(7) \+ [1]^{\oplus 2}$
	and $[1]^{\oplus 7} \+ \SO(2)$ commute),
	so $H'$ is regular.
	But $H'$ is taken to $H$ by an outer 
	automorphism of $\Spin(8)$,
	and all automorphisms of $\Spin(8)$
	are realized through conjugation
	by an element of $N_{F_4}\big(\Spin(8)\mnn\big)$%
	~\cite[Thm.~14.2]{adamsExceptionalbook},
	so $H$ too is regular in $F_4$.
	That $Z_G(H)^0 \iso \U(1)$ and
	$H = H_S$ follow by a \csa.

\item $\big(F_4,\Sp(3)\mnn\big)$

	The subgroup $\Sp(3)$ is contained in a copy of $\Sp(3) \otimes_{\Z/2} \SU(2) < F_4$,
	within which $\SO(2)$ centralizes it.
	That $Z_G(H)^0 = \SO(2)$ and
	$H = H_S$ follow by a \csa.

\end{itemize}
	
\nd In the following cases, $H$ is regular and 
$w_0^G$ acts as $-{\id}$ while $w_0^H$ does not, so
\textbf{\GH is \isotf}.

\begin{itemize}

\item $\big(\Sp(4),\SU(4)\mnn\big)$

	The central circle $\D_4\U(1)$ 
of $\U(4) < \Sp(4)$ centralizes $\SU(4)$.
That $Z_G(H)^0 \iso \U(1)$ and
$H = H_S$ follow by a \csa.

\item $\big(\Sp(3),\SU(3)\mnn\big)$

The central circle $\D_3\U(1)$ 
of $\U(3) < \Sp(3)$ centralizes $\SU(3)$.
That $Z_G(H)^0 \iso \U(1)$ and
$H = H_S$ follow by a \csa.

\item $\big(\Sp(2), i_{p,q}\U(1)\mnn\big)$ for any coprime pair $(p,q)$

To find $H_S$ and $Z_G(H)^0$, note that $H$ is
regular with centralizer at least the maximal torus
since it is contained within it. 
To determine when it is contained in an $A_1$
subgroup or centralized by one,
recall from
the representation theory of $\Sp(1)$ and $\SO(3)$
and \Cref{thm:regularity} that
the regular $A_1$-subgroups of $\Sp(2)$ up to conjugacy 
are $\Sp(1)\+[1]$ and $\D\Sp(1)$.

\item $\big(G_2, i_{p,q} \U(1)\mnn\big)$  for any coprime pair $(p,q)$

Here $H$ lies within the maximal torus $T$ of $G_2$ from \Cref{rmk:sphere-discussion}\ref{rmk:G2},
and hence is at least centralized by $T$, hence regular.
If we have $Z_G(H)^0 H/H$ an $A_1$ group,
then $Z_G(H)$ contains an $A_1$ subgroup virtually disjoint from $S$,
and following \Cref{rmk:sphere-discussion}\ref{rmk:G2},
these fall into four conjugacy classes,
of which only those of $\SU(2)_1$ and $\SU(2)_3$ admit a centralizing
circle.
These are also the only regular $A_1$ subgroups.
Thus $H = H_S$ and $Z_G(H)^0=T$ unless $H$ is (conjugate to)
the maximal circle of either $\SU(2)$,
in which case $H_S > H$ is that $\SU(2)$ and 
$Z_G(H)^0$ is the product of $H$ and the other $\SU(2)$.
\end{itemize}

In the remaining cases we apply \Cref{thm:spheres} again,
but require subcases or \emph{sui generis} arguments
in determining whether $N > W_H$.

\bitem

\item $\big(\SU(5),\SU(2)\+\SU(3)\mnn\big)$ is not \isotf.

One has $N = W_H = \Sigma_{\{1,2\}} \x \Sigma_{\{3,4,5\}}$ in $W_G = \Sigma_{\{1,2,3,4,5\}}$
and sees $H$ is centalized by elements 
$\diag(z^3,z^3,z^{-2}, z^{-2},z^{-2})$.
That $Z_G(H)^0 \iso \U(1)$ and
$H = H_S$ follow by a \csa.

\item $\big(\Spin(10),\SU(5)\mnn\big)$ is not \isotf.

In this example the embedding is defined by 
lifting the standard block embedding of $\SU(5)$ in $\SO(10)$,
but we may work locally in $\so(10) \iso \spin(10)$.
Hence $\ft = \so(2)^{\oplus 5} \iso \R^5$ 
and $W_{\Spin(10)}$ 
is the semi-direct product of $\Sym_5$ with 
the kernel
$
\mr S \{\pm 1\}^5
$
of the multiplication map $\{\pm 1\}^5 \lt \{\pm 1\}$.
Then $\fs = \{\vec x \in \R^5 : \sum x_j=0\}$ 
is 
the nullspace of the weight
$\weight = (1,1,1,1,1)\in ({\Z^5})\dual$.
The stabilizer $N$ of $\{\pm\weight\}$ is just $\Sym_5 = W_{\SU(5)}$
since elements of $W_{\Spin(10)}$ 
can negate only an even number of coordinates
and $5$ is not even.

That the centralizer contains a nontrivial circle follows 
from lifting the standard circle $\D_5 \SO(2) < \SO(10)$ centralizing $\SU(5)$.
That $Z_G(H)^0 \iso \U(1)$ and
$H = H_S$ follow by a \csa.

\item $\big(\SU(3), i_{p,q}\U(1)\mnn\big)$ is \isotf if and only if $(p,q)$ is one of $(\pm1,0)$, $(0,\pm1)$, or $(\pm1,\mp1)$.

Since $\fs \iso \R$,
we have $N > W_H$ if and only if some element $w \in W_G$
acts as $-{\id}|_\fs$.\footnote{\ 
Isotropy-formality in the case $H$ is of rank one was already 
characterized in one of the authors' earlier works~\cite{carlson2018eqftorus},
where \isotf pairs \GH were classified.
}
This only happens for $S$ conjugate to 
$\mr{S}\big(\U(1)^{\oplus 2}\big) \+ [1]$ 
under 
the coordinate-permutating action  of $\Sigma_3 = W_{\SU(3)}$
on $T = \mathrm S\big(\U(1)^{\oplus 3}\big)$. 
If we parameterize $T$ by
$(z,w) \lmt \diag(z,w,z\-w\-)$
so that $i_{p,q}(t) = \diag(t^p,t^q,t^{-p-q})$,
this is just what we have claimed.


To find $H_S$ and $Z_G(S)^0$, 
we view $\ft$ as $\{\vec x \in \R^3 : \sum x_j = 0\}$, 
where $\R^3$ carries the standard inner product,
so that
$\fs = \R (p,q,-p-q)$ 
and $\fs^\perp = \R (p+2q,-q-2p,p-q) \eqc (\defm{p^*},\defm{q^*},-p^*-q^*)$.
Because the only  representations of
$\SU(2)$ and $\SO(3)$ of dimension $\leq 3$ are the standard ones,
all $A_1$-subgroups of $\SU(3)$ are conjugate to the standard ones.
By \Cref{thm:regularity},
$\SO(3)$ is not a regular subgroup of $\SU(3)$, 
so any $H = H_S$ is isomorphic to $S$ or $\SU(2)$,
and the semisimple virtual factor of $Z_G(H)^0$ 
can only be, respectively, $\SU(2)$ or $\trivialgroup$. 
Since any $\SU(2)$ is conjugate to the standard one,
if $H_S > S = i_{p,q}\U(1)$, then $p\.q \in \{0,-1\}$.
	For $Z_G(H)^0$ to be one of the conjugates of the standard $\SU(2)$
	on the other hand, we need $(p^*,q^*)$
	to be $(\pm 1,0)$, $(0,\pm 1)$, or $(\pm 1,\mp 1)$
	up to scaling, and one checks this happens
	if $(p,q)$ is respectively $(\pm1,\mp2)$, $(\mp2,\pm1)$, or $(\pm1,\pm1)$.
\item $\big(G' \x \Sp(1), H' \. i_{p,q}\U(1)\mnn\big)$, 
where $(G',H')$ is a rational cohomology sphere
as in \Cref{table:OddRationalSphere} with $\Cen_{G'}(H')^0\neq \{1\}$ 
and $p$ and $q$ are coprime and nonzero, is \isotf 
except when $(G',H')=\big(\SU(k+1),\SU(k)\mnn\big)$ for $k\geq 2$. 

	Let $\defm {\f u_1}$ be the Lie algebra
	of a maximal torus of the rank-$1$ group $Z_{G'}(H')$. 
	Since the rank of $H'$ is one less than that of $G'$,
	it follows the sum of $\f u_1$ with the Lie algebra $\defm{\fs_1}$ of a maximal torus $\defm{S_1}$ of $H'$ 
	spans the Lie algebra  $\defm{\ft_1}$ of a maximal torus of $G'$, i.e., $\ft_1=\fs_1\+ \f u_1$.
	We consider the Weyl group of $G'$ with respect to this particular torus.
	The Lie algebra $\defm{\f u_2}$ 
	of a maximal torus of $\Sp(1)$ is one dimensional,
	so we may make the identifications 
	\[
	\ft = \ft_1 \+ \f u_2 
	= \fs_1 \+ \f u_1 \+ \f u_2 
	\iso \fs_1 \+ \R \+ \R
	\] 
	in describing the Lie algebra $\defm\ft$ of the maximal torus of $G = G'\x \Sp(1)$.
	Under this identification, 
	the image $\defm{\f u}$ of the tangent space $\R$ of $S^1$
	under the inclusion $i_{p,q}$ of \Cref{rmk:table-explanation}
	becomes $\R\big((0,p),q\big)$ 
	and the Lie algebra $\fs$ of the maximal torus 
	of $H = H'\.i_{p,q}\U(1)$ 
	becomes $\fs_1 \+ \f u = \fs_1 \+ \R\big((0,p),q\big)$.

	Because we assume $q$ is nonzero,
	$H$ is abstractly isomorphic to $H' \x \U(1)$, 
	so we have isomorphisms $W_H \iso W_{H'} \x W_{\U(1)} \iso W_{H'}$.
	The longest word \defm{$\widetilde{w}_0$} 
	of $W_{G' \x \Sp(1)} \iso W_{G'} \x W_{\Sp(1)}$
	is the pair $(w_0,-{\id}|_{\f u_2})$,
	for $\defm{w_0} \in \Aut(\fs_1 \+ \R)$ the longest word of $W_{G'}$.

	\bitem
\item For $G'=\SO(2k+1)$, $\Sp(k)$ and $G_2$, we have $w_0|_{\ft_1}=-{\id}_{\ft_1}$, hence the word $\widetilde{w}_0$ induces $-{\id}$ on all of $\ft$. 
	Since $W_H = W_{H'}$ acts trivially on $\f u$, 
	we see $\widetilde{w}_0|_{\fs}=-{\id}_{\fs}\notin W_H$. 
	Thus, we have $N\neq W_H$.

\item For $(G',H')=\big(\SU(k+1),\SU(k)\+[1]\big)$ with $k\geq 2$, 
	the Lie algebra of the maximal torus of $G=\SU(k+1)\x\Sp(1)$
	can be written as
	\begin{align*}
		\ft = \Big\{(x_1,x_2,\ldots,x_{k+1};y) \in \R^{k+1}\+\R :  \sum_{j=1}^{k+1} x_j =0\Big\}.
	\end{align*}
	Under this identification,
	the Lie algebras of the maximal tori of $H'=\SU(k)$ and~$i_{p,q}\U(1)$ become
	respectively
	\begin{align*}
		\phantom{\mbox{and}}\qquad
		\fs_1 & = \phantom{\R}\Big\{(x_1,x_2,\ldots,x_{k},0;0) \in \R^{k+1}\+\R :  \sum_{j=1}^{k} x_j =0\Big\} \qquad\mbox{and}\\
		\f u & = \R\.(p,p,\ldots,p,-kp;q).
	\end{align*} 
	Thus the Lie algebra $\fs=\fs_1\+ \f u$ of the maximal torus of $H$ 
	is perpendicular to the vector $v=(q,q,\ldots,q,-kq;-(k^2+k)p)$. 
	Since $k\geq 2$, we have $kq \neq q$,
	so the stabilizer $\tN = W_{\{\pm v\}}$ of $\{\pm v\}$ 
	under the action of $W_G = \Sym_{k+1}\x \{\pm 1\}$ is exactly $W_v = \Sym_k$. 
	By \Cref{thm:transition}, we have $N = W_v|_\fs \iso \Sym_k$. 
	However, since $W_H = W_{H'} = \Sym_k$, we see $N = W_H$.
	\eitem
	To find $Z_{G' \x \Sp(1)} (H' \. i_{p,q}\U(1))^0$ 
	for $G' \in\big\{\SU(k+1), \SO(2k+1), \Sp(k), G_2\big\}$, 
	we first note that $Z_{G' \x \Sp(1)} (H')^0 = Z_{G'}(H')^0 \x \Sp(1)$, 
	which is $\U(1)\x\Sp(1)$ or $\Sp(1)\x\Sp(1)$ in each case
	since we assume $Z_{G'}(H') \neq \trivialgroup$. 
	The centralizer of $i_{p,q}\U(1)$ within this subgroup is  
	is $\U(1) \x \U(1)$ in all cases, since $p\.q \neq 0$. 

	To find $H_S$, 
	we first observe that the normalizer of $H' \. i_{p,q}\U(1)$ in $G' \x \Sp(1)$
	contains both this centralizer $\U(1) \x \U(1)$
	and $S_1$,
	and hence all of $T$. 
	Thus $\big(G' \x \Sp(1),H' \. i_{p,q}\U(1)\mnn\big)$ is a regular pair.
	It remains to determine when $H = H_S$.
	If instead $H' \. i_{p,q}\U(1) < H_S$,
	then $\big(G' \x \Sp(1),H_S\big)$ is a \rsp by \Cref{thm:extend-H}.
	Since $G' \x \Sp(1)$ is not simple,
	the pair $(G' \x \Sp(1), H_S)$ is not irreducible.
	As $G'$ is simple, this pair must then
	be an enlargement of $(G' , H')$ of the form 
	$(G' \x \Sp(1), H' \. \D \Sp(1))$ described in \Cref{thm:enlargement}, 
	meaning $Z_{G'}(H')^0$ is an $A_1$ subgroup and 
	$(p,q)=(1,\pm 1)$ (up to conjugacy).
	Thus $H_S$ is determined by consulting \Cref{table:OddRationalSphere}.
\qedhere
\eitem
\end{proof}

{\footnotesize\bibliography{bibshort}}

\newcommand{\etalchar}[1]{$^{#1}$}
\begin{thebibliography}{BBF{\etalchar{+}}60}

    \medskip

\bibitem[A69]{adamsLiebook}
J.~Frank Adams.
\newblock {\em {Lectures on {Lie} groups}}.
\newblock Univ. Chicago Press, 1969.

\bibitem[A96]{adamsExceptionalbook}
J.~Frank Adams
\newblock (edited by Z.~Mahmud and M.~Mimura).
\newblock {\em Lectures on Exceptional {Lie} Groups}.
\newblock Chicago Lectures in Mathematics. Univ. Chicago Press, 1996.

\bibitem[AtB]{AB1984}
Michael~F. Atiyah and Raoul Bott.
\newblock {The moment map and equivariant cohomology}.
\newblock {\em Topology}, 23(1):1--28, 1984.
\newblock \href {https://doi.org/10.1016/0040-9383(84)90021-1}
  {\path{doi:10.1016/0040-9383(84)90021-1}}.

\bibitem[Ber]{berger1961}
Marcel Berger.
\newblock Les vari\'{e}t\'{e}s riemanniennes homog\`{e}nes normales simplement
  connexes \`{a} courbure strictement positive.
\newblock {\em Ann. Scuola Norm. Sup. Pisa Cl. Sci.}, 15(3):179--246, 1961.
\newblock \url{http://numdam.org/item/ASNSP_1961_3_15_3_179_0/}.

\bibitem[BeV]{BV1982}
Nicole Berline and Mich{\`e}le Vergne.
\newblock {Classes caract{\'e}ristiques {\'e}quivariantes. {Formule} de
  localisation en cohomologie {\'e}quivariante}.
\newblock {\em C. R. Acad. Sci. Paris}, 295(2):539--541, Nov. 1982.
\newblock \url{http://gallica.bnf.fr/ark:/12148/bpt6k62356694/f77}.

\bibitem[Bes78]{besse1978}
Arthur~L. Besse.
\newblock {\em Manifolds all of whose geodesics are closed}, volume~93 of {\em
  Ergeb. Math. Grenzgeb. (2)}.
\newblock Springer-Verlag, Berlin, 1978.

\bibitem[Bes87]{Be}
Arthur~L. Besse.
\newblock {\em Einstein Manifolds}, volume~10 of {\em Ergeb. Math. Grenzgeb.
  (3)}.
\newblock Springer, 1987.

\bibitem[BlS02]{bletzsiebert2002}
Oliver Bletz-Siebert.
\newblock Homogeneous spaces with the cohomology of sphere products and compact
  quadrangles.
\newblock 2002 dissertation.
\newblock 
  \url{http://opus.bibliothek.uni-wuerzburg.de/files/332/bletzdiss.pdf}.

\bibitem[Bo49]{borel1949remarks}
Armand Borel.
\newblock Some remarks about {Lie} groups transitive on spheres and tori.
\newblock {\em Bull. Amer. Math. Soc.}, 55(6):580--587, 1949.
\newblock \href {https://doi.org/10.1090/S0002-9904-1949-09251-0}
  {\path{doi:10.1090/S0002-9904-1949-09251-0}}.

\bibitem[Bo50]{borelCR1950Cplan}
Armand Borel.
\newblock Le plan projectif des octaves et les spheres comme espaces homogenes.
\newblock {\em C. R. Acad. Sci. Paris}, 230(1378-1380):22, 1950.
\newblock \url{https://gallica.bnf.fr/ark:/12148/bpt6k3182n/f1378}

\bibitem[Bo53]{borel1953bouts}
Armand Borel.
\newblock {Les bouts des espaces homog{\'e}nes de groupes de {L}ie}.
\newblock {\em Ann. of Math. (2)}, 58:443--457, 1953.
\newblock \href {https://doi.org/10.2307/1969747}
  {\path{doi:10.2307/1969747}}.

\bibitem[Bo{\etalchar{+}}60]{borel1960seminar}
Armand Borel, Glen Bredon, Edwin~E. Floyd, Deane Montgomery, and Richard
  Palais.
\newblock {\em {Seminar on transformation groups}}.
\newblock Number~46 in {Ann. of Math. Stud.} Princeton Univ. Press, 1960.
\newblock 
  \url{http://indiana.edu/~jfdavis/seminar/Borel,Seminar_on_Transformation_Groups.pdf}.

\bibitem[BodS49]{boreldesiebenthal}
Armand Borel and Jean de~Siebenthal.
\newblock Les sous-groupes ferm{\'e}s de rang maximum des groupes de {Lie}
  clos.
\newblock {\em Comment. Math. Helv.}, 23(1):200--221, 1949.
\newblock \href {https://doi.org/10.1007/bf02565599}
  {\path{doi:10.1007/bf02565599}}.

\bibitem[Bre61]{bredon1961homogeneous}
Glen~E. Bredon.
\newblock On homogeneous cohomology spheres.
\newblock {\em Ann. of Math. (2)}, pages 556--565, 1961.
\newblock \href {https://doi.org/10.2307/1970317} {\path{doi:10.2307/1970317}}.

\bibitem[Bri98]{brion1998eqcohom}
Michel Brion.
\newblock {Equivariant cohomology and equivariant intersection theory}.
\newblock In {\em {Representation theories and algebraic geometry}}, pages
  1--37. Springer, 1998.
\newblock 
  \url{http://link.springer.com/chapter/10.1007/978-94-015-9131-7_1}, \href
  {http://arxiv.org/abs/9802063} {\path{arXiv:9802063}}.

\bibitem[C]{carlson2018eqftorus}
Jeffrey~D. Carlson.
\newblock Equivariant formality of isotropic torus actions.
\newblock {\em J. Homotopy Relat. Struct.}, 14(1):199--234, 2019.
\newblock \href {http://arxiv.org/abs/1410.5740} {\path{arXiv:1410.5740}},
  \href {https://doi.org/10.1007/s40062-018-0207-5}
  {\path{doi:10.1007/s40062-018-0207-5}}.

\bibitem[CF]{carlsonfok2018}
Jeffrey~D. Carlson and Chi-Kwong Fok.
\newblock Equivariant formality of isotropy actions.
\newblock {\em J. Lond. Math. Soc.}, Mar. 2018.
\newblock \href {http://arxiv.org/abs/1511.06228} {\path{arXiv:1511.06228}},
  \href {https://doi.org/10.1112/jlms.12116} {\path{doi:10.1112/jlms.12116}}.

\bibitem[D]{dynkin1952}
Evgenii~Borisovich Dynkin.
\newblock Semisimple subalgebras of semisimple {Lie} algebras.
\newblock {\em Mat. Sb. (N.S.)}, 30(72)(2):349--462, 1952.
\newblock \url{http://mi.mathnet.ru/msb5435}.

\bibitem[GGK]{GGK}
Viktor~L. Ginzburg, Victor Guillemin, and Yael Karshon.
\newblock {\em {Moment maps, cobordisms, and {Hamiltonian} group actions}},
  volume~98 of {\em {Math. Surveys Monogr.}}
\newblock Amer. Math. Soc., Providence, RI, 2002.
\newblock 
  \url{http://utm.utoronto.ca/~karshony/HUJI/monograph/index-pdf.html}.

\bibitem[GHV]{GHVIII}
Werner~H. Greub, Stephen Halperin, and Ray Vanstone.
\newblock {\em {Connections, curvature, and cohomology, vol. {III}: Cohomology
  of principal bundles and homogeneous spaces}}.
\newblock Academic Press, 1976.

\bibitem[GoN]{goertschesnoshari2016}
Oliver Goertsches and Sam~Haghshenas Noshari.
\newblock Equivariant formality of isotropy actions on homogeneous spaces
  defined by lie group automorphisms.
\newblock {\em J. Pure Appl. Algebra}, 220(5):2017--2028, 2016.
\newblock \href {http://arxiv.org/abs/1405.2655} {\path{arXiv:1405.2655}},
  \href {https://doi.org/10.1016/j.jpaa.2015.10.013}
  {\path{doi:10.1016/j.jpaa.2015.10.013}}.

\bibitem[GorKM]{GKM1998}
Mark Goresky, Robert Kottwitz, and Robert MacPherson.
\newblock {Equivariant cohomology, {Koszul} duality, and the localization
  theorem}.
\newblock {\em Invent. Math.}, 131(1):25--83, 1998.
\newblock \url{http://math.ias.edu/~goresky/pdf/equivariant.jour.pdf},
  \href {https://doi.org/10.1007/s002220050197}
  {\path{doi:10.1007/s002220050197}}.

\bibitem[H]{hatcher}
Allen Hatcher.
\newblock {\em {Algebraic topology}}.
\newblock Cambridge Univ. Press, 2002.
\newblock \url{http://math.cornell.edu/~hatcher/AT/ATpage.html}.

\bibitem[IT]{ishitoyatoda}
Kiminao Ishitoya and Hirosi Toda.
\newblock On the cohomology of irreducible symmetric spaces of exceptional
  type.
\newblock {\em J. Math. Kyoto Univ.}, 17(2):225--243, 1977.
\newblock \href {https://doi.org/10.1215/kjm/1250522765}
  {\path{doi:10.1215/kjm/1250522765}}.

\bibitem[JK]{jeffreykirwan1995}
Lisa~C. Jeffrey and Frances~C. Kirwan.
\newblock {Localization for nonabelian group actions}.
\newblock {\em Topology}, 34(2):291--327, 1995.
\newblock \href {http://arxiv.org/abs/alg-geom/9307001}
  {\path{arXiv:alg-geom/9307001}}, \href
  {https://doi.org/10.1016/0040-9383(94)00028-J}
  {\path{doi:10.1016/0040-9383(94)00028-J}}.

\bibitem[Kam]{kamerich}
Berno Nico~Plechelmus Kamerich.
\newblock {\em Transitive transformation groups of products of two spheres}.
\newblock PhD thesis, Katholieke Universiteit Nijmegen, 1977.
\newblock \url{https://hdl.handle.net/2066/147554}.

\bibitem[KaZ]{kapovitchziller2004}
Vitali Kapovitch and Wolfgang Ziller.
\newblock Biquotients with singly generated rational cohomology.
\newblock {\em Geom. Dedicata}, 104(1):149--160, 2004.
\newblock \href {https://doi.org/10.1023/B:GEOM.0000022860.89824.2f}
  {\path{doi:10.1023/B:GEOM.0000022860.89824.2f}}.

\bibitem[Kerr96]{kerr1996some}
Megan Kerr.
\newblock Some new homogeneous {Einstein} metrics on symmetric spaces.
\newblock {\em Trans. Amer. Math. Soc.}, 348(1):153--171, 1996.
\newblock \href {https://doi.org/10.1090/S0002-9947-96-01512-7}
  {\path{doi:10.1090/S0002-9947-96-01512-7}}.

\bibitem[Kerr98]{kerr1998}
Megan~M. Kerr.
\newblock New examples of homogeneous {Einstein} metrics.
\newblock {\em Michigan Math. J.}, 45(1):115--134, 04 1998.
\newblock \href {https://doi.org/10.1307/mmj/1030132086}
  {\path{doi:10.1307/mmj/1030132086}}.

\bibitem[Kr]{kramer2002homogeneous}
Linus Kramer.
\newblock {\em Homogeneous spaces, {Tits} buildings, and isoparametric
  hypersurfaces}, volume 158 of {\em Mem. Amer. Math. Soc.}
\newblock Amer. Math. Soc., 2002.
\newblock \href {http://arxiv.org/abs/math/0109133}
  {\path{arXiv:math/0109133}}.

\bibitem[Mat]{matsushima1951type}
Yoz{\hato} Matsushima.
\newblock On a type of subgroups of a compact {Lie} group.
\newblock {\em Nagoya Math. J.}, 2:1--15, 1951.
\newblock \href {https://doi.org/10.1017/S0027763000009995}
  {\path{doi:10.1017/S0027763000009995}}.

\bibitem[May]{mayanskiy2016}
Evgeny Mayanskiy.
\newblock The subalgebras of {$G_2$}.
\newblock 2016.
\newblock \href {http://arxiv.org/abs/1611.04070v1}
  {\path{arXiv:1611.04070v1}}.

\bibitem[MiT]{mimuratoda}
Mamoru Mimura and Hiroshi Toda.
\newblock {\em {Topology of {Lie} groups, {I} and {II}}}, volume~91 of {\em
  {Transl. Math. Monogr.}}
\newblock Amer. Math. Soc., Providence, RI, 2000.

\bibitem[MS]{montgomerysamelson1943}
Deane Montgomery and Hans Samelson.
\newblock Transformation groups of spheres.
\newblock {\em Ann. of Math. (2)}, pages 454--470, 1943.
\newblock \href {https://doi.org/10.2307/1968975}
  {\path{doi:10.2307/1968975}}.

\bibitem[NS]{neuselsmith}
Mara~D. Neusel and Larry Smith.
\newblock {\em {Invariant Theory of Finite Groups}}, volume~94 of {\em {Math.
  Surveys Monogr.}}
\newblock Amer. Math. Soc., 2002.

\bibitem[On63]{onishchik1963transitive}
Arkadii~L'vovich Onishchik.
\newblock Transitive compact transformation groups.
\newblock {\em Mat. Sb. (N.S.)}, 102(4):447--485, 1963.
\newblock \url{http://mi.mathnet.ru/msb4555}.

\bibitem[On94]{onishchik}
Arkadi~L. Onishchik.
\newblock {\em {Topology of transitive transformation groups}}.
\newblock Johann Ambrosius Barth, 1994.

\bibitem[Sh]{shiga1996equivariant}
Hiroo Shiga.
\newblock {Equivariant de {Rham} cohomology of homogeneous spaces}.
\newblock {\em J. Pure Appl. Algebra}, 106(2):173--183, 1996.
\newblock \href {https://doi.org/10.1016/0022-4049(95)00018-6}
  {\path{doi:10.1016/0022-4049(95)00018-6}}.

\bibitem[ShT]{shigatakahashi1995}
Hiroo Shiga and Hideo Takahashi.
\newblock {Remarks on equivariant cohomology of homogeneous spaces}.
\newblock Technical report~17, Tech. Univ. Nagaoka, May 1995.
\newblock \url{dl.ndl.go.jp/info:ndljp/pid/8760355}.

\bibitem[St]{steenrod1951}
Norman~Earl Steenrod.
\newblock {\em {The topology of fibre bundles}}, volume~14 of {\em {Princeton
  Math. Ser}}.
\newblock Princeton Univ. Press, 1951.

\bibitem[W]{wang1949}
Hsien-Chung Wang.
\newblock Homogeneous spaces with non-vanishing {Euler} characteristics.
\newblock {\em Ann. of Math. (2)}, 50(4):925--953, 1949.
\newblock \href {https://doi.org/10.2307/1969588} {\path{doi:10.2307/1969588}}.

\bibitem[Wf]{wolf2011book}
Joseph~Albert Wolf.
\newblock {\em Spaces of constant curvature}.
\newblock AMS Chelsea Publishing, Providence, RI, 6th edition, 2011.

\bibitem[Wfm]{wolfrom2002}
Martin Wolfrom.
\newblock Isoparametric hypersurfaces with a homogeneous focal manifold.
\newblock 2002 dissertation.
\newblock 
  \url{http://opus.bibliothek.uni-wuerzburg.de/files/293/WOLFROM.PDF}.

\bibitem[Y]{yokota1979}
Ichiro Yokota.
\newblock Explicit isomorphism between {$\SU(4)$} and {$\Spin(6)$}.
\newblock {\em J. Fac. Sci. Shinshu Univ}, 14(1):29--34, 1979.

\end{thebibliography}

\bigskip

\nd\footnotesize{\textsc{
	}\\
	\url{jeffrey.carlson@tufts.edu}
}

\medskip

\nd\textsc{North China Electric Power University
}\\
\url{che@ncepu.edu.cn}

\end{document}